\numberwithin{equation}{section}
\newtheorem{lemma}{Lemma}[section]
\newtheorem{theorem}{Theorem}[section]
\newtheorem{remark}{Remark}[]  %%%%\newtheorem{remark}{Remark}[section]表示remark标记的符号按照第几节来编号
\newsavebox{\tablebox}
\def \[{\begin{equation}}
\def \]{\end{equation}}
\def \[{\begin{equation}}
\def \]{\end{equation}}
\def \<{\begin{eqnarray*}}
\def \>{\end{eqnarray*}}
\begin{document}
\title{{Optimal error estimate of two  linear and momentum-preserving Fourier pseudo-spectral schemes for the RLW equation}}
\author{Qi Hong$^{a}$, ~Yushun Wang$^{b}$, ~Yuezheng Gong$^{c,*}$\\ \\
$^{a}$ Graduate School of China Academy of Engineering Physics, Beijing 100088, China\\
$^{b}$ Jiangsu Key Laboratory for NSLSCS, School of Mathematical Sciences,\\
Nanjing Normal University, Jiangsu 210023, China\\
$^{c}$College of Science, Nanjing University of Aeronautics and Astronautics,
Nanjing, 210016, China
}
\date{}
\maketitle

\begin{abstract}
In this paper, two novel linear-implicit and momentum-preserving Fourier pseudo-spectral schemes are proposed and analyzed for the regularized long-wave equation. The numerical methods are based on the blend of the Fourier pseudo-spectral method in space and the linear-implicit Crank-Nicolson method or the leap-frog scheme in time. The two fully discrete linear schemes are shown to possess the discrete momentum conservation law, and the linear systems resulting from the schemes are proved uniquely solvable. Due to the momentum conservative property of the proposed schemes, the Fourier pseudo-spectral solution is proved to be bounded in the discrete $L^{\infty}$ norm. Then by using the standard energy method, both the linear-implicit Crank-Nicolson momentum-preserving scheme and the linear-implicit leap-frog momentum-preserving scheme are shown to have the accuracy of $\mathcal{O}(\tau^2+N^{-r})$  in the discrete $L^{\infty}$ norm without any restrictions on the grid ratio, where $N$ is the number of nodes and $\tau$ is the time step size. Numerical examples are carried out to verify the correction of the theory analysis and the efficiency of the proposed schemes.\\
\end{abstract}

{\bf Keywords:} regularized long-wave equation, momentum-preserving, linear conservative scheme, Fourier pseudo-spectral method, error estimate.
             %\vskip.5cm
%\noindent\rule[2mm]{\textwidth}{.3pt}
%%%%%%%%%%%%%%%%%%%%%%%%%%%%%%%%%%%%%%%%%%%%%%%%%%%%%%%%%%%%%%%%%%%%%%%%%%%%%%%%%%%
%%%%%%%%%%%%%%%%%%%%%%%%%%%%%%%%%%%%%%%%%%%%%%%%%%%%%%%%%%%%%%%%%%%%%%%%%%%%%%%%%%%
\begin{figure}[b]
\small \baselineskip=10pt
\rule[2mm]{1.8cm}{0.2mm} \par
$^{*}$Corresponding author.\\
E-mail address: gongyuezheng@nuaa.edu.cn~(Yuezheng Gong).
\end{figure}

\pagestyle{myheadings}
\markboth{\hfil %name
   \hfil \hbox{}}
{\hbox{} \hfil %title
Qi Hong, Qikui Du and Yushun Wang  \hfil}
%%%%%%%%%%%%%%%%%%%%%%%%%%%%%%%%%%%%%%%%%%%%%%%%%%%%%%%%%%%%%%%%%%%%%%%%%%%%%%%%%%%%%%%%%%%

\section{Introduction}\label{sec;introduction}
In this paper, we consider the following regularized long-wave (RLW) type equation
\begin{equation}\label{model-eq}
\begin{array}{lll}
&u_t+au_x-\sigma u_{xxt}+\big(F'(u)\big)_x=0,\quad &x\in(x_L,x_R),\; t\in[0,T],\\[0.3cm]
&u(x,0)=u_0(x),\quad &x\in[x_L,x_R],\\[0.3cm]
&u(x,t)=u(x+L,t),\quad &t\in[0,T],
\end{array}
\end{equation}
where $F(u)=\gamma u^3/6$, $L=x_R-x_L$ and $u_0(x)$ is a given function, $a$, $\sigma$ and $\gamma$ are positive constants. The RLW equation was proposed first by Peregrine
\cite{Peregrine1966} and later by Benjamin et al. \cite{Benjamin1972} as a model for small amplitude long waves on the surface of water in a channel. Generalizations such as the generalized RLW equation
or the modified RLW equation \cite{Avrin1985} and generalized Rosenau-Kawhara-RLW equation \cite{Rosenau1985} also arise from various applications. The RLW is very important in physics media since it describes phenomena with weak nonlinearity and dispersion waves, including nonlinear transverse waves in shallow water, ion-acoustic and magneto hydrodynamic waves in plasma and phonon packets in nonlinear crystals.
It admits three conservation laws \cite{Olver1979} given by
\begin{align}\label{conti-invar-eq}
\mathcal{I}_1=\int_{x_L}^{x_R}udx,\quad
\mathcal{I}_2=\int_{x_L}^{x_R}\left(u^2+\sigma u_x^2\right)dx,\quad
\mathcal{I}_3=\int_{x_L}^{x_R}\left(\dfrac{\gamma}{6}u^3+\dfrac{a}{2}u^2\right)dx,
\end{align}
which correspond to mass, momentum and energy of the system, respectively.
Various numerical techniques are applied for the RLW equation, particularly including finite difference scheme \cite{Zhang2005}, the various forms of finite element methods \cite{Danaf2005,Dogan2002,Esen2006,Saka2008}, pseudo-spectral method \cite{Djidjeli2003,Guo1988}, meshless collocation method using radial basis function \cite{Shokri2010}, least square method \cite{Gardner1996,Dag2000,Gu2008} and collocation methods with quadratic B-splines and septic splines \cite{Soliman2001,Dag2004,Soliman2005}, and so on.

In Ref. \cite{Fei1995}, the authors pointed out that the non-conservative schemes may easily induce nonlinear blow-up. Li and Vu-Quoc also said: ``in some areas, the ability to preserve some invariant properties of the original differential equation is a criterion to judge the success of a numerical simulation" \cite{LiVuQuoc1995}. Therefore, for studying long time dynamics of a dynamical system, there has been a surge on constructing numerical methods for dynamical systems governed by differential equations to preserve as many properties of the continuous system as possible. Numerical methods that preserve at least some of the structural properties of the continuous dynamical system are called geometric integrators or structure-preserving algorithms \cite{Feng2003book,Hairer2006book,Bubb2003Book}. Nowadays, a large number of structure-preserving algorithms have been developed for the RLW equation. Sun and Qin \cite{Sun2004} constructed a multi-symplectic Preissman scheme by using the implicit midpoint rule both in space and time. Cai \cite{Cai2009} developed a 6-point multi-symplectic Preissman scheme. An explicit 10-point multi-symplectic Euler-box scheme for the RLW equation was proposed in \cite{02Cai2009}. In \cite{Hong2018,Hong2017,Kong2015,Sun2010,WangTC2013}, some methods that conserve energy conservation laws were developed. Cai and Hong \cite{CaiHong2017} proposed three local energy-preserving algorithms for the RLW-type equation.

Compared with the numerical application of the RLW equation, there exists few literatures about the convergence analysis. Solan \cite{Sloan1991} investigated the RLW equation by a three-level explicit Fourier pseudo-spectral scheme. But the stability and error estimate were not pursued. Coupled with the Richardson extrapolation, Zheng et al. \cite{Zheng2013} proposed and analyzed a two-level nonlinear Crank-Nicolson finite difference scheme for the RLW equation, where the accuracy of $\mathcal{O}(\tau^2+h^4)$ of their method was obtained. In \cite{Kang2015}, Kang et al. presented a second-order in time linearized semi-implicit Fourier pseudo-spectral scheme for the generalized RLW equation. They showed that such an approximate solution satisfies $\mathcal{O}(\tau^2)$ in time and a spectral accuracy in space by assuming the numerical solution bounded in $L^{\infty}$ norm. In \cite{Caij2017}, Cai et al. proposed two explicit local momentum-preserving schemes and two fully implicit local momentum-preserving schemes and gave the error estimates in $L^{\infty}$ norm for their proposed implicit schemes. There is no doubt that a scheme with adequate theoretical foundations is more competitive and reliable in practical applications.

In this paper, we aim to develop linear structure-preserving algorithms for the RLW equation. We first start from an equivalent from of the RLW equation and  discretize it by the Fourier pseudo-spectral method in space to arrive at a semi-discrete ordinary differential equation (ODE) system, where the momentum is conserved in the spatial semi-discrete level. Then we respectively apply the linear-implicit Crank-Nicolson scheme and the leap-frog scheme in time for the ODE system to obtain two fully discrete linear schemes. The two proposed schemes are then shown to satisfy a fully discretized momentum conservation law and be uniquely solvable. According to the equivalence between the semi-norms induced by the Fourier pseudo-spectral method and the finite difference method \cite{GongJCP2017} and the discrete momentum conservation law, the numerical solution is proved to be bounded in the discrete $L^{\infty}$ norm. Then by the standard energy method, the linear-implicit Crank-Nicolson momentum-preserving scheme is proved to has the accuracy of $\mathcal{O}(\tau^2+N^{-r})$ in the discrete $L^{\infty}$ norm without imposing any constraints on the grid ratio. And the linear-implicit leap-frog momentum-preserving scheme can be similarly discussed. Finally, some numerical examples are presented to demonstrate the correction of the theory analysis and the efficiency of the
proposed schemes.

In summary, the proposed methods have the following advantages:
\begin{itemize}
	\item
	The schemes preserve the discrete momentum conservation law, which implies that they possess excellent stability.
	\item
	One only needs to solve a linear equation system at each time step, which reduces the computational cost.
	\item
	High order, i.e. they are second order in time and spectral accuracy in space.
	\item
	The convergence results of the two schemes are rigorously analyzed without any constraints on the grid ratio.
\end{itemize}

The remainder of the paper is organized as follows. In section \ref{semi-discrete-system}, we apply the Fourier pseudo-spectral method in space for the RLW equation, which satisfies the semi-discrete momentum conservation law. In section \ref{full-discrete-system}, we respectively employ the linear-implicit Crank-Nicolson method and the leap-frog method in time to obtain two fully discrete linear conservative schemes, where their momentum conservative property and unique solvability are proved rigorously. The convergence results are obtained in section \ref{conver-results}. In section \ref{Numer-exper}, numerical experiments are presented to illustrate the efficiency and accuracy of the proposed
methods. Finally, we give conclusions and further comments.

\section{Structure preserving spatial discretization}\label{semi-discrete-system}
In this section, we devise a Fourier pseudo-spectral spatial discretization for the RLW equation with periodic boundary condition. The semi-discrete scheme is shown to preserve the corresponding momentum conservation law.

First, we introduce some notations and useful lemmas. Let $N$ be a positive even integer. The domain $\Omega=[x_L,x_R]$ is uniformly partitioned with mesh size $h=(x_R-x_L)/N$ and
$\Omega_h=\{x_j|x_j=x_L+jh,\ 0\leq j\leq N-1\}$.
Let $V_{h} = \big\{u|u=\{u_{j}|x_{j}\in
\Omega_{h}\} \big\}$ be the space of grid functions on $\Omega_h$. Throughout this paper, the hollow letters ${\mathbb A}, {\mathbb B}, {\mathbb D}, \cdots$  will be used to denote rectangular matrices with a number of columns greater than one, while the bold ones ${\mathbf U}, {\mathbf V}, {\mathbf W}, \cdots$ will represent vectors.
For any two grid functions $\mathbf{U},\ \mathbf{V}\in V_h$, we define the discrete inner product
\begin{equation*}
(\mathbf{U},\mathbf{V})_h=h\sum_{j=0}^{N-1}U_j\overline{V}_j,
\end{equation*}
where $\overline{V}_j$ denotes the conjugate of $V_j$.  The discrete norms of $\mathbf{U}$ and its difference quotient are defined, respectively, as
\begin{equation*}
\|\mathbf{U}\|_{h}=\sqrt{(\mathbf{U},\mathbf{U})_h},\quad \|\delta_x^+\mathbf{U}\|_{h}=\sqrt{(\delta_x^+\mathbf{U},\delta_x^+\mathbf{U})_h},\quad
\|\mathbf{U}\|_{\infty,h}=\max_{0\leq j\leq N-1}|U_j|,
\end{equation*}
where $\delta_x^+ U_j = (U_{j+1}-U_j)/h.$ It is easy to prove that
\begin{align*}%\label{two-semi-norm-def}
\|\delta_x^+\mathbf{U}\|_{h}=\sqrt{(-\mathbb{A}_2\mathbf{U},\mathbf{U})_h},
\end{align*}
where
\begin{align*}
\mathbb{A}_2=\dfrac{1}{h^2}
\left[
\begin{array}{rrrrrr}
-2&\ 1&\ 0&\ 0&\ \cdots&\ 1\\
1&\ -2&\ 1&\ 0&\ \cdots&\ 0\\
0&\ 1&\ -2&\ 1&\ \cdots& 0\\
\quad& \quad&\ \ddots&\ \ddots&\ \ddots  \\
0&\ \cdots&\  0&\  1&\  -2&\  1\\
1&\ \cdots&\  0&\  0&\  1&\  -2\\
\end{array}
\right].
\end{align*}

We define \cite{ChenQin2001,ShenTang2006}
\begin{align*}
S^{'}_N=\mathrm{span}\{g_j(x),~j=0,1,\ldots,N-1 \}
\end{align*}
as the interpolation space, where $g_j(x)$ is trigonometric polynomial of degree $N/2$ given by
\begin{align*}
g_j(x)=\dfrac{1}{N}\sum_{k=-N/2}^{N/2}\dfrac{1}{c_k}e^{ik\mu(x-x_j)},
\end{align*}
where  $c_l=1 (|l|\neq N/2)$, $c_{-N/2}=c_{N/2}=2$ and $\mu=2\pi/(x_R-x_L)$. We define the interpolation operator $I_N:C(\Omega)\rightarrow S^{'}_N$
\begin{align}\label{interpol-eq}
I_Nu(x)=\sum_{j=0}^{N-1}u_jg_j(x),
\end{align}
where $u_j=u(x_j,t)$. To obtain derivative $\partial_{x}^{k}I_{N}u(x)$ at collocation points, we differentiate \eqref{interpol-eq} and evaluate the resulting expressions at point
$x_{j}$:
\begin{align*}
\dfrac{\partial^kI_Nu(x_j)}{\partial x^k}=\sum_{l=0}^{N-1}u_l\dfrac{d^kg_l(x_j)}{dx^k}=\sum_{l=0}^{N-1}(\mathbb{D}_{k})_{jl}u_l,
\end{align*}
where $\mathbb{D}_{k}$ is a so-called $k$-order differential matrix \cite{ShenTang2006}.

\begin{lemma}[\cite{Gong2014CICP}] \label{lem-FFT} Let
	\begin{align*}
	\Lambda_k=
	\begin{cases}
	\left[i\mu\mathrm{diag}(0,1,\cdots,\dfrac{N}{2}-1,0,-\dfrac{N}{2}+1,\cdots,-1)\right]^k,&\quad\mathrm{k\ odd},\\[0.3cm]
	\left[i\mu\mathrm{diag}(0,1,\cdots,\dfrac{N}{2}-1,\dfrac{N}{2},-\dfrac{N}{2}+1,\cdots,-1)\right]^k,&\quad\mathrm{k\ even},
	\end{cases}
	\end{align*}
	we have
	\begin{align*}
	\mathbb{D}_{k}=F_N^{-1}\Lambda_k F_N,
	\end{align*}
	where $F_N$ is the discrete Fourier transform, and $F_N^{-1}$ is the discrete inverse Fourier transform.
\end{lemma}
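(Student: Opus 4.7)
The plan is to compute $(\mathbb{D}_k)_{nj} = d^k g_j(x_n)/dx^k$ directly from the trigonometric kernel $g_j(x)$ and recognize the result as a conjugation of a diagonal matrix by the DFT. Differentiating the defining series for $g_j$ term by term and evaluating at $x=x_n$ gives
\[
(\mathbb{D}_k)_{nj} \;=\; \frac{1}{N}\sum_{k'=-N/2}^{N/2}\frac{(ik'\mu)^k}{c_{k'}}\,e^{i k'\mu(x_n-x_j)}.
\]
Because $x_n-x_j=(n-j)h$ and $\mu h=2\pi/N$, the exponential equals $e^{2\pi i k'(n-j)/N}$, which is, up to the Nyquist index, exactly the kernel of $F_N^{-1}$ acting on the left in $n$ and of $F_N$ acting on the right in $j$. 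So once the Nyquist contributions are handled, the right-hand side reads $\sum_{k'} (F_N^{-1})_{nk'}(\Lambda_k)_{k'k'}(F_N)_{k'j}$ with $(\Lambda_k)_{k'k'}=(ik'\mu)^k$ for $|k'|<N/2$, proving the factorization.

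The real work, and the one delicate step, will be merging the two endpoint terms $k'=N/2$ and $k'=-N/2$, which appear with weight $1/c_{\pm N/2}=1/2$, into a single Nyquist mode so that the sum matches a standard $N$-point DFT. Note $(N/2)\mu(x_n-x_j)=\pi(n-j)$, so $e^{\pm i(N/2)\mu(x_n-x_j)}=(-1)^{n-j}$ in both cases. Thus the two Nyquist terms contribute
\[
\tfrac{1}{2}\bigl[(iN\mu/2)^k+(-iN\mu/2)^k\bigr](-1)^{n-j}
=\tfrac{1}{2}\bigl[1+(-1)^k\bigr](iN\mu/2)^k(-1)^{n-j}.
\]
For even $k$ this equals $(iN\mu/2)^k(-1)^{n-j}$, which is exactly the effect of assigning the single Nyquist diagonal entry in $\Lambda_k$ the value $(iN\mu/2)^k$; for odd $k$ the bracket vanishes, so the Nyquist contribution at the grid points is identically zero and the corresponding diagonal entry of $\Lambda_k$ can—indeed must—be taken to be $0$. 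This is precisely the piecewise definition given in the statement.

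With the Nyquist mode absorbed, the index set runs over $\{0,1,\ldots,N/2-1,-N/2+1,\ldots,-1\}$ (or, equivalently, with a single index at $N/2$ in the even case), and the remaining routine step is to match the ordering of $F_N$ used in the paper so that the diagonal entries align with the rows/columns as written in the statement. I do not expect any serious obstacle beyond this bookkeeping; the whole argument is a direct computation, with the endpoint parity analysis being the only point that is not purely mechanical.
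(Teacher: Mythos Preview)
Your argument is correct and is the standard way to establish this factorization: differentiate the interpolation kernel term by term, evaluate at the grid, and then collapse the two half-weighted Nyquist terms $k'=\pm N/2$ into a single mode using the parity of $k$. The endpoint computation you carry out is exactly the point of the lemma, and your treatment of it is accurate.

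Note, however, that the paper does not actually prove this lemma; it is quoted from \cite{Gong2014CICP} with no argument given. So there is no ``paper's proof'' to compare against here. Your direct computation supplies what the paper omits, and the only remaining step you flag---matching the DFT index ordering $(0,1,\ldots,N/2-1,N/2,-N/2+1,\ldots,-1)$ to the diagonal as written---is indeed pure bookkeeping with no hidden obstacle.
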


\begin{remark}
	With the help of Lemma \ref{lem-FFT}, we can evaluate the derivatives by using the FFT algorithm instead of the spectral differentiation matrix.
\end{remark}

Here, we define a new semi-norm as follows:
\begin{align}\label{new-semi-norm-def}
|\mathbf{U}|_{h}=\sqrt{(-\mathbb{D}_2\mathbf{U},\mathbf{U})_h}, ~ \mathbf{U}\in V_h.
\end{align}
Note that $\mathbb{D}_2$ is real symmetric and negative semi-definite, so the definition \eqref{new-semi-norm-def} is meaningful. Next, we have the following lemma.

\begin{lemma}[\cite{GongJCP2017}]\label{semi-norm-equiv}
	For any grid function $\mathbf{U}\in V_h$, we have
	\begin{align}
	&\|\mathbb{D}_1\mathbf{U}\|_{h}\leq |\mathbf{U}|_h,\label{semi-norm-equiv-ineq2}\\[0.3cm]
	&\|\delta^+_x\mathbf{U}\|_{h}\leq |\mathbf{U}|_{h} \leq \dfrac{\pi}{2}\|\delta^+_x\mathbf{U}\|_{h}.\label{semi-norm-equiv-ineq1}
	\end{align}
\end{lemma}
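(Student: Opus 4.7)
The plan is to diagonalize simultaneously the three operators $-\mathbb{D}_2$, $\mathbb{D}_1$, and $-\mathbb{A}_2$ in the discrete Fourier basis and then compare their symbols mode by mode. By Lemma~\ref{lem-FFT}, $\mathbb{D}_k = F_N^{-1}\Lambda_k F_N$; since $\mathbb{A}_2$ is circulant, a direct computation gives $\mathbb{A}_2 = F_N^{-1}\Lambda_{\mathbb{A}} F_N$ whose $k$-th diagonal entry is $-4h^{-2}\sin^2(k\mu h/2)$, where $\mu h/2 = \pi/N$. By the discrete Plancherel identity, each of $|\mathbf{U}|_h^2 = (-\mathbb{D}_2\mathbf{U},\mathbf{U})_h$, $\|\mathbb{D}_1\mathbf{U}\|_h^2$, and $\|\delta_x^+\mathbf{U}\|_h^2 = (-\mathbb{A}_2\mathbf{U},\mathbf{U})_h$ becomes a weighted sum of $|(F_N\mathbf{U})_k|^2$ with weights equal to $\mu^2 k_*^2$, $\mu^2 \tilde k^2$, and $4h^{-2}\sin^2(k\mu h/2)$, respectively; here $\tilde k = 0$ but $k_* = N/2$ at the Nyquist index $k = N/2$, while $\tilde k = k_* = k$ otherwise.

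Inequality \eqref{semi-norm-equiv-ineq2} follows immediately: the symbols $\mu^2 \tilde k^2$ and $\mu^2 k_*^2$ agree for every index except $k = N/2$, where the former vanishes while the latter equals $\mu^2(N/2)^2 > 0$. Hence the pointwise bound $\mu^2 \tilde k^2 \leq \mu^2 k_*^2$ holds, and summation yields $\|\mathbb{D}_1\mathbf{U}\|_h \leq |\mathbf{U}|_h$. For \eqref{semi-norm-equiv-ineq1} I would apply the elementary two-sided bound $(2/\pi)|\theta| \leq |\sin\theta| \leq |\theta|$ for $|\theta|\leq \pi/2$ with $\theta = k\mu h/2 = k\pi/N$, $|k|\leq N/2$, to obtain
\begin{equation*}
\frac{4}{\pi^2}\mu^2 k^2 \;\leq\; \frac{4\sin^2(k\mu h/2)}{h^2} \;\leq\; \mu^2 k^2,
\end{equation*}
with equality on both sides at $k = N/2$, since both expressions then equal $4/h^2$. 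Weighting by $|(F_N\mathbf{U})_k|^2$ and summing deliver $\|\delta_x^+\mathbf{U}\|_h \leq |\mathbf{U}|_h \leq (\pi/2)\|\delta_x^+\mathbf{U}\|_h$.

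The only delicate point is the Nyquist index $k=N/2$, where the symbol of $\Lambda_1$ is conventionally set to zero (to preserve realness of derivatives of real data), creating an asymmetry with $\Lambda_2$. Fortunately this asymmetry goes the favourable way for both inequalities: in \eqref{semi-norm-equiv-ineq2} it only shrinks the left-hand side relative to the right, and in \eqref{semi-norm-equiv-ineq1} the finite-difference and spectral symbols happen to coincide at $k=N/2$, so no extra constant intrudes. Once this bookkeeping is carried out, the rest of the argument reduces to a one-line invocation of $\sin\theta \leq \theta$ and the Jordan inequality.
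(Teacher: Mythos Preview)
The paper does not actually prove this lemma; it is quoted from \cite{GongJCP2017} without argument. Your proposal supplies exactly the proof one would expect and is essentially correct: simultaneous diagonalisation of $\mathbb{D}_1$, $\mathbb{D}_2$, and $\mathbb{A}_2$ by the DFT (the first two via Lemma~\ref{lem-FFT}, the third because $\mathbb{A}_2$ is circulant), followed by a mode-by-mode comparison of the symbols $\mu^2\tilde k^{\,2}$, $\mu^2 k_*^{2}$, and $4h^{-2}\sin^2(k\pi/N)$, together with the Jordan inequality $(2/\pi)\theta\le\sin\theta\le\theta$ on $[0,\pi/2]$. This is the standard route and almost certainly the argument given in the cited reference.

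One small correction: at the Nyquist index $k=N/2$ you assert ``equality on both sides'' of your displayed two-sided bound, with ``both expressions then equal $4/h^2$.'' In fact only the \emph{left} inequality is an equality there: with $\theta=\pi/2$ one gets $(4/\pi^2)\mu^2(N/2)^2 = 4/h^2 = 4h^{-2}\sin^2(\pi/2)$, whereas the right-hand member is $\mu^2(N/2)^2 = \pi^2/h^2 > 4/h^2$. This slip is harmless for the argument --- the chain of inequalities still holds at every index --- and it actually shows that the constant $\pi/2$ in \eqref{semi-norm-equiv-ineq1} is sharp, being attained by a grid function supported on the Nyquist mode.
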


\begin{remark}
	Lemma \ref{semi-norm-equiv}  indicates that the semi-norm induced by the Fourier pseudo-spectral method is equivalent to that of the finite difference method, which will play an important role in the proof of boundedness of the numerical solution.
\end{remark}

We next discuss how to design momentum-preserving spatial discretization for the RLW equation. To this end, we rewrite the RLW equation into the following equivalent form
\begin{align}\label{RLW-equivalent-eq}
u_t-\sigma u_{xxt}+au_x+\dfrac{\gamma}{3}(u\partial_x+\partial_xu)u=0.
\end{align}
Applying the Fourier pseudo-spectral method in space for \eqref{RLW-equivalent-eq}, we obtain a semi-discrete system
\begin{align}\label{semi-discr-syst1-equiv}
(\mathbb{I}-\sigma\mathbb{D}_2)\dfrac{d}{dt}\mathbf{U}+\mathbb{D}(\mathbf{U})\mathbf{U}=0,
\end{align}
where $\mathbb{D}(\mathbf{U})$ is defined as
\begin{align*}%\label{Au-def}
\mathbb{D}(\mathbf{U}) = a\mathbb{D}_1 + \dfrac{\gamma}{3}\big(\mathrm{diag}(\mathbf{U})\mathbb{D}_1 + \mathbb{D}_1\mathrm{diag}(\mathbf{U})\big).
\end{align*}
Note that $\mathbb{D}(\mathbf{U})$ is anti-symmetric for any $\mathbf{U}$ because of the anti-symmetry of $\mathbb{D}_1$. Next we will present that the semi-discrete system \eqref{semi-discr-syst1-equiv} possesses the discrete momentum conservation law.

\begin{theorem}
	The semi-discrete scheme \eqref{semi-discr-syst1-equiv}  preserves the discrete momentum conservation law
	\begin{align*}%\label{semi-disc-conser-law}
	\dfrac{d}{dt}\mathcal{I}_{2_h} = 0,
	\end{align*}
	where $\mathcal{I}_{2_h}=\|\mathbf{U}\|_{h}^2+\sigma|\mathbf{U}|_{h}^2.$
\end{theorem}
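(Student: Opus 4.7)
The plan is to recast $\mathcal{I}_{2_h}$ as a single quadratic form involving the symmetric operator $\mathbb{I}-\sigma\mathbb{D}_2$, differentiate in time, and then exploit the anti-symmetry of $\mathbb{D}(\mathbf{U})$ (already noted in the text) to kill the nonlinear contribution.

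First I would observe that by definition of the discrete inner product and of the semi-norm $|\cdot|_h$ in \eqref{new-semi-norm-def}, one has
\begin{align*}
\mathcal{I}_{2_h}=\|\mathbf{U}\|_{h}^2+\sigma|\mathbf{U}|_{h}^2
=(\mathbf{U},\mathbf{U})_h+\sigma(-\mathbb{D}_2\mathbf{U},\mathbf{U})_h
=\big((\mathbb{I}-\sigma\mathbb{D}_2)\mathbf{U},\mathbf{U}\big)_h .
\end{align*}
Because $\mathbb{D}_2$ (hence $\mathbb{I}-\sigma\mathbb{D}_2$) is real symmetric, differentiating in time and transferring one operator across the inner product gives
\begin{align*}
\frac{d}{dt}\mathcal{I}_{2_h}
=2\Big((\mathbb{I}-\sigma\mathbb{D}_2)\tfrac{d}{dt}\mathbf{U},\,\mathbf{U}\Big)_h .
\end{align*}

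Next I would substitute the semi-discrete evolution \eqref{semi-discr-syst1-equiv}, which says exactly $(\mathbb{I}-\sigma\mathbb{D}_2)\frac{d}{dt}\mathbf{U}=-\mathbb{D}(\mathbf{U})\mathbf{U}$, to obtain
\begin{align*}
\frac{d}{dt}\mathcal{I}_{2_h}=-2\big(\mathbb{D}(\mathbf{U})\mathbf{U},\mathbf{U}\big)_h .
\end{align*}
Finally, since $\mathbb{D}_1$ is anti-symmetric on $V_h$ (a standard property of the Fourier pseudo-spectral first-order differentiation matrix, as can be read off from Lemma \ref{lem-FFT} where the symbol $\Lambda_1$ is purely imaginary), the combination $\mathrm{diag}(\mathbf{U})\mathbb{D}_1+\mathbb{D}_1\mathrm{diag}(\mathbf{U})$ is also anti-symmetric, hence $\mathbb{D}(\mathbf{U})$ is anti-symmetric for every $\mathbf{U}\in V_h$; therefore $(\mathbb{D}(\mathbf{U})\mathbf{U},\mathbf{U})_h=0$ and the theorem follows.

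There is no real obstacle here: the only subtle point is making sure that differentiation of the quadratic form $((\mathbb{I}-\sigma\mathbb{D}_2)\mathbf{U},\mathbf{U})_h$ produces the factor $2$ rather than a lopsided expression, which is legitimate precisely because $\mathbb{D}_2$ is symmetric. Everything else is a direct consequence of algebraic properties of the differentiation matrices $\mathbb{D}_1,\mathbb{D}_2$ already recorded before the statement of the theorem.
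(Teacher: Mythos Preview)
Your proof is correct and follows essentially the same route as the paper: take the inner product of \eqref{semi-discr-syst1-equiv} with $2\mathbf{U}$, use the symmetry of $\mathbb{I}-\sigma\mathbb{D}_2$ to obtain $\frac{d}{dt}\mathcal{I}_{2_h}$, and invoke the anti-symmetry of $\mathbb{D}(\mathbf{U})$ to annihilate the nonlinear term. The only cosmetic difference is that you first rewrite $\mathcal{I}_{2_h}$ as a single quadratic form and then differentiate, whereas the paper proceeds directly by pairing the equation with $2\mathbf{U}$.
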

\begin{proof}
	Noticing the anti-symmetric property of $\mathbb{D}(\mathbf{U})$, we obtain
	\begin{align*}
	\Big(\mathbb{D}(\mathbf{U})\mathbf{U},\mathbf{U}\Big)_h = 0.
	\end{align*}
	Taking the discrete inner product of \eqref{semi-discr-syst1-equiv} with 2$\mathbf{U}$, we deduce
	\begin{align*}
	\dfrac{d}{dt}(\|\mathbf{U}\|_{h}^2+\sigma|\mathbf{U}|_{h}^2) = 0.
	\end{align*}
	This completes the proof.
\end{proof}

\section{Fully discrete linear-implicit momentum-preserving scheme}\label{full-discrete-system}
In this section, we introduce two temporal methods for the semi-discrete system \eqref{semi-discr-syst1-equiv} to arrive at fully discretized schemes. One is the linear-implicit Crank-Nicolson method and the other is the leap-frog method, which both preserve the fully discrete momentum conservative law. For ease of reading, we call them LCN-MP and LLF-MP, respectively.

\subsection{Linear-implicit Crank-Nicolson scheme}
For a positive integer $N_t$, we denote time-step $\tau=T/N_t$, $t_n=n\tau,\ 0\leq n\leq N_t$. We define
\begin{align*}
&\delta^+_t{\mathbf U}^n=\dfrac{\mathbf U^{n+1}-\mathbf U^{n}}{\tau},\quad \widehat{\mathbf{U}}^{n+\frac{1}{2}}=\dfrac{3\mathbf U^{n}-\mathbf{U}^{n-1}}{2},\quad
\mathbf{U}^{n+\frac{1}{2}}=\dfrac{\mathbf{U}^{n+1}+\mathbf{U}^n}{2}.
\end{align*}
In this paper, we denote the numerical solution $U_j^n\approx u(x_j,t_n)$ and $C$ denotes a positive constant which is independing of mesh grid and may be different in different cases.

Applying the linear-implicit Crank-Nicolson scheme in time for the semi-discrete system \eqref{semi-discr-syst1-equiv}, we obtain LCN-MP as follows
\begin{align}\label{full-discr-LICN}
(\mathbb{I}-\sigma \mathbb{D}_2) \delta^+_t\mathbf{U}^n + \mathbb{D}(\widehat{\mathbf{U}}^{n+\frac{1}{2}}) \mathbf{U}^{n+\frac{1}{2}} = 0,
\end{align}
where $n\geq 1$ and $\mathbf{U}^1$ is the solution of the following equation
\begin{align}\label{order-2-scheme}
(\mathbb{I}-\sigma\mathbb{D}_2) \delta_t^+\mathbf{U}^0 + \mathbb{D}(\mathbf{U}^{\frac{1}{2}}) \mathbf{U}^{\frac{1}{2}} = 0.
\end{align}
Next, we prove that LCN-MP conserves the discrete momentum conservation law and is uniquely solvable.
\begin{theorem}\label{t-S1-G2-LICN}
	LCN-MP \eqref{full-discr-LICN} with \eqref{order-2-scheme} satisfies the following discrete momentum conservation law
	\begin{align}\label{full-G1-result2}
	{\mathcal{I}_{2_h}}^{n} \equiv {\mathcal{I}_{2_h}}^0,\quad \forall\ n\geq 0,
	\end{align}
	where ${\mathcal{I}_{2_h}}^n=\|\mathbf{U}^n\|_{h}^2+\sigma|\mathbf{U}^n|_{h}^2.$ 	
\end{theorem}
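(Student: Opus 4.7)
The plan is to mimic the continuous-in-time energy argument from the semi-discrete theorem and leverage two structural facts already available: $\mathbb{D}_2$ is real symmetric (so $\mathbb{I}-\sigma\mathbb{D}_2$ is self-adjoint in $(\cdot,\cdot)_h$) and $\mathbb{D}(\mathbf{V})$ is anti-symmetric for every $\mathbf{V}\in V_h$. The natural test function is $2\mathbf{U}^{n+\frac{1}{2}}=\mathbf{U}^{n+1}+\mathbf{U}^n$, because it matches the Crank--Nicolson midpoint and is orthogonal in $(\cdot,\cdot)_h$ to the nonlinear term.

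First, I would take the discrete inner product of \eqref{full-discr-LICN} with $2\mathbf{U}^{n+\frac{1}{2}}$. The nonlinear contribution
\[
2\bigl(\mathbb{D}(\widehat{\mathbf{U}}^{n+\frac{1}{2}})\mathbf{U}^{n+\frac{1}{2}},\mathbf{U}^{n+\frac{1}{2}}\bigr)_h
\]
vanishes exactly as in the semi-discrete proof, since $\mathbb{D}(\widehat{\mathbf{U}}^{n+\frac{1}{2}})$ is anti-symmetric (this is precisely why the extrapolation $\widehat{\mathbf{U}}^{n+\frac{1}{2}}$ was introduced: it linearises the scheme without breaking skew-symmetry). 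For the linear part, I would split
\[
2\bigl((\mathbb{I}-\sigma\mathbb{D}_2)\delta_t^+\mathbf{U}^n,\mathbf{U}^{n+\frac{1}{2}}\bigr)_h
= \tfrac{2}{\tau}\bigl((\mathbb{I}-\sigma\mathbb{D}_2)(\mathbf{U}^{n+1}-\mathbf{U}^n),\tfrac{1}{2}(\mathbf{U}^{n+1}+\mathbf{U}^n)\bigr)_h,
\]
and use the self-adjointness of $\mathbb{I}-\sigma\mathbb{D}_2$ to collapse the cross terms $(\mathbb{D}_2\mathbf{U}^{n+1},\mathbf{U}^n)_h-(\mathbb{D}_2\mathbf{U}^n,\mathbf{U}^{n+1})_h=0$. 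What remains is the telescoping difference
\[
\tfrac{1}{\tau}\Bigl[\bigl(\|\mathbf{U}^{n+1}\|_h^2+\sigma|\mathbf{U}^{n+1}|_h^2\bigr)-\bigl(\|\mathbf{U}^n\|_h^2+\sigma|\mathbf{U}^n|_h^2\bigr)\Bigr]=\tfrac{1}{\tau}\bigl(\mathcal{I}_{2_h}^{n+1}-\mathcal{I}_{2_h}^n\bigr),
\]
where I use $(-\mathbb{D}_2\mathbf{V},\mathbf{V})_h=|\mathbf{V}|_h^2$ by definition \eqref{new-semi-norm-def}. Combining these observations yields $\mathcal{I}_{2_h}^{n+1}=\mathcal{I}_{2_h}^n$ for every $n\ge 1$.

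The starting step $n=0$ needs a separate treatment because \eqref{full-discr-LICN} is not applicable there; instead I would test the initialisation \eqref{order-2-scheme} with $2\mathbf{U}^{\frac{1}{2}}=\mathbf{U}^1+\mathbf{U}^0$. The nonlinear term again drops out by anti-symmetry of $\mathbb{D}(\mathbf{U}^{\frac{1}{2}})$, and the linear term telescopes by the same algebra, giving $\mathcal{I}_{2_h}^1=\mathcal{I}_{2_h}^0$. A trivial induction then chains everything together to obtain \eqref{full-G1-result2}.

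I do not foresee any genuine obstacle: the design of the scheme has been tailored so that the skew-symmetric structure of the nonlinearity survives the linearisation via $\widehat{\mathbf{U}}^{n+\frac{1}{2}}$, and the self-adjointness of $\mathbb{I}-\sigma\mathbb{D}_2$ makes the $\mathcal{I}_{2_h}$-functional telescope exactly across a Crank--Nicolson step. The only care needed is to treat $n=0$ with the auxiliary starting scheme and to keep track that the choice of test function $2\mathbf{U}^{n+\frac{1}{2}}$ (rather than $\widehat{\mathbf{U}}^{n+\frac{1}{2}}$) is what produces a clean telescoping identity.
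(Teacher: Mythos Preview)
Your proposal is correct and follows essentially the same approach as the paper: test \eqref{full-discr-LICN} with $2\mathbf{U}^{n+\frac{1}{2}}$, use the anti-symmetry of $\mathbb{D}(\widehat{\mathbf{U}}^{n+\frac{1}{2}})$ to eliminate the nonlinear term, let the self-adjointness of $\mathbb{I}-\sigma\mathbb{D}_2$ produce the telescoping identity $\mathcal{I}_{2_h}^{n+1}=\mathcal{I}_{2_h}^n$, and handle $n=0$ separately via \eqref{order-2-scheme}. The paper's argument is identical in structure and detail.
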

\begin{proof}
	Noticing that $\mathbb{D}(\mathbf{U})$ is anti-symmetric for any $\mathbf{U},$ we have
	\begin{align*}
	\Big(\mathbb{D}(\widehat{\mathbf{U}}^{n+\frac{1}{2}})\mathbf{U}^{n+\frac{1}{2}},\mathbf{U}^{n+\frac{1}{2}}\Big)_h=0.
	\end{align*}
	Therefore, taking the discrete inner product of \eqref{full-discr-LICN} with $2 \mathbf{U}^{n+\frac{1}{2}}$,  we have
	\begin{align*}
	0 = \Big((\mathbb{I}-\sigma \mathbb{D}_2)\delta^+_t\mathbf U^{n},2\mathbf{U}^{n+\frac{1}{2}}\Big)_h = \dfrac{1}{\tau}\Big(\|\mathbf{U}^{n+1}\|^2_{h}+\sigma|\mathbf{U}^{n+1}|^2_{h} -\|\mathbf{U}^{n}\|^2_{h}-\sigma|\mathbf{U}^{n}|^2_{h}\Big),
	\end{align*}
	which implies  that
	\begin{align}\label{theo1-eq1}
	{\mathcal{I}_{2_h}}^{n+1} = {\mathcal{I}_{2_h}}^{n},\quad\forall\;n\geq 1.
	\end{align}
	Similarly, it follows from \eqref{order-2-scheme} that
	\begin{align}\label{theo1-eq2}
	{\mathcal{I}_{2_h}}^{1} = {\mathcal{I}_{2_h}}^{0}.	
	\end{align}
	Combining \eqref{theo1-eq1} and \eqref{theo1-eq2} leads to \eqref{full-G1-result2}. This completes the proof.
\end{proof}

\begin{theorem}\label{t-S1-unique}
	For any $\sigma>0,$ LCN-MP \eqref{full-discr-LICN} is uniquely solvable.
\end{theorem}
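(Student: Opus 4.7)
The plan is to exploit the linearity and algebraic structure of \eqref{full-discr-LICN}. Because $\widehat{\mathbf{U}}^{n+\frac{1}{2}}=(3\mathbf{U}^n-\mathbf{U}^{n-1})/2$ depends only on previously computed time levels, \eqref{full-discr-LICN} is a genuinely \emph{linear} equation in the unknown $\mathbf{U}^{n+1}$. Substituting the definitions of $\delta_t^+\mathbf{U}^n$ and $\mathbf{U}^{n+\frac{1}{2}}$, I would rewrite the scheme in the standard form $A\mathbf{U}^{n+1}=\mathbf{b}$ with
\begin{align*}
A=\frac{1}{\tau}(\mathbb{I}-\sigma\mathbb{D}_2)+\frac{1}{2}\mathbb{D}(\widehat{\mathbf{U}}^{n+\frac{1}{2}}),\qquad \mathbf{b}=\Big[\frac{1}{\tau}(\mathbb{I}-\sigma\mathbb{D}_2)-\frac{1}{2}\mathbb{D}(\widehat{\mathbf{U}}^{n+\frac{1}{2}})\Big]\mathbf{U}^n,
\end{align*}
so that unique solvability reduces to showing that $A$ is nonsingular.

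To see that $A$ has trivial kernel, I would lean on the two structural facts already noted in the paper: $\mathbb{D}_2$ is real symmetric and negative semi-definite (so that the semi-norm $|\cdot|_h$ in \eqref{new-semi-norm-def} is well defined), and $\mathbb{D}(\mathbf{W})$ is real anti-symmetric for every grid function $\mathbf{W}$. In particular, for any $\mathbf{V}\in V_h$ we have $((\mathbb{I}-\sigma\mathbb{D}_2)\mathbf{V},\mathbf{V})_h=\|\mathbf{V}\|_h^2+\sigma|\mathbf{V}|_h^2$, which is real and non-negative, while $(\mathbb{D}(\widehat{\mathbf{U}}^{n+\frac{1}{2}})\mathbf{V},\mathbf{V})_h$ is purely imaginary. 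Assuming $A\mathbf{V}=\mathbf{0}$, pairing with $\mathbf{V}$ in $(\cdot,\cdot)_h$ and extracting the real part therefore yields
\begin{align*}
\frac{1}{\tau}\|\mathbf{V}\|_h^2+\frac{\sigma}{\tau}|\mathbf{V}|_h^2=0,
\end{align*}
which forces $\mathbf{V}=\mathbf{0}$. Hence $A$ is invertible and $\mathbf{U}^{n+1}$ is uniquely determined by $\mathbf{U}^n$ and $\mathbf{U}^{n-1}$.

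The only real subtlety is bookkeeping around complex versus real vectors: because $(\cdot,\cdot)_h$ is defined with complex conjugation, one has to justify the real/imaginary split above instead of invoking a real-vector energy identity blindly. An equivalent shortcut is to observe that $A$ is itself a real matrix, so its real and complex kernels coincide, and restrict attention to real $\mathbf{V}$, in which case the anti-symmetric contribution vanishes outright by the same identity used in the proof of Theorem~\ref{t-S1-G2-LICN}. Beyond this point the argument is a one-line energy estimate, and crucially no smallness condition on $\tau$ or on $\widehat{\mathbf{U}}^{n+\frac{1}{2}}$ enters, in agreement with the statement that the conclusion holds for every $\sigma>0$.
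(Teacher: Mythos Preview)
Your proposal is correct and follows essentially the same approach as the paper: rewrite the scheme as a linear system, then show the coefficient matrix is nonsingular by pairing $A\mathbf{V}=\mathbf{0}$ with $\mathbf{V}$ and using the anti-symmetry of $\mathbb{D}(\cdot)$ together with the positive definiteness of $\mathbb{I}-\sigma\mathbb{D}_2$. The only cosmetic difference is that the paper scales the system by $\tau$ and works directly with real vectors via $\mathbf{x}^T\mathbb{B}\mathbf{x}$, whereas you carry the $1/\tau$ factor and add the (helpful) remark about the real/imaginary split in the complex inner product.
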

\begin{proof}
	The scheme \eqref{full-discr-LICN} can be written as the following linear equation system
	\begin{align*}
	\mathbb{B} \mathbf{U}^{n+1} = \mathbf{b},
	\end{align*}
	where $\mathbb{B} = \mathbb{I}-\sigma\mathbb{D}_2 + \frac{\tau}{2} \mathbb{D}(\widehat{\mathbf{U}}^{n+\frac{1}{2}})$ and $\mathbf{b} = \left(\mathbb{I}-\sigma\mathbb{D}_2 - \frac{\tau}{2} \mathbb{D}(\widehat{\mathbf{U}}^{n+\frac{1}{2}})\right) \mathbf{U}^{n}.$ In order to obtain the unique solvability of the scheme, we need to prove that the matrix $\mathbb{B}$ is invertible.
	
	If $\mathbb{B} \mathbf{x} = \mathbf{0},$ then we have
	\begin{align*}
	0 = \mathbf{x}^T \mathbb{B} \mathbf{x} = \mathbf{x}^T (\mathbb{I}-\sigma\mathbb{D}_2)\mathbf{x},
	\end{align*}
	where the anti-symmetry of $\mathbb{D}(\mathbf{U})$ was used. Note that $\mathbb{I}-\sigma\mathbb{D}_2$ is symmetric positive definite for $\sigma>0$, thus $\mathbf{x} = \mathbf{0},$ i.e. $\mathbb{B} \mathbf{x} = \mathbf{0}$ has only zero solution. Therefore, $\mathbb{B}$ is invertible. This completes the proof.
\end{proof}

\subsection{Leap-frog scheme}
Denote
\begin{align*}
&\delta_t{\mathbf U}^n=\dfrac{\mathbf U^{n+1}-\mathbf U^{n-1}}{2\tau},\quad \widehat{\mathbf{U}}^n=\dfrac{\mathbf U^{n+1}+\mathbf{U}^{n-1}}{2}.
\end{align*}
Applying the leap-frog scheme in time for the semi-discrete system \eqref{semi-discr-syst1-equiv}, we obtain  LLF-MP
\begin{align}\label{full-discr-LILF}
(\mathbb{I}-\sigma\mathbb{D}_2) \delta_t\mathbf{U}^n + \mathbb{D}\left(\mathbf{U}^{n}\right) \hat{\mathbf{U}}^{n} = 0, \quad n\geq 1.
\end{align}
Here we still choose \eqref{order-2-scheme} to compute the initial datum for the second level values of the three time levels scheme \eqref{full-discr-LILF}.

\begin{theorem}\label{t-S1-G2-LILF}
	LLF-MP \eqref{full-discr-LILF} with \eqref{order-2-scheme} satisfies the following discrete momentum conservation law
	\begin{align*}%\label{full-G1-LLF}
	{\mathcal{I}_{2_h}}^{n} \equiv {\mathcal{I}_{2_h}}^0,\quad \forall\ n\geq 0,
	\end{align*}
	where ${\mathcal{I}_{2_h}}^n=\|\mathbf{U}^n\|_{h}^2+\sigma|\mathbf{U}^n|_{h}^2.$ 	
\end{theorem}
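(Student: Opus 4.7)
The plan is to mimic the proof of Theorem \ref{t-S1-G2-LICN}, but now taking the discrete inner product with $2\widehat{\mathbf{U}}^n = \mathbf{U}^{n+1} + \mathbf{U}^{n-1}$ instead of $2\mathbf{U}^{n+\frac{1}{2}}$. The nonlinear contribution will again be killed by the anti-symmetry of $\mathbb{D}(\mathbf{U}^n)$, since
\[
\bigl(\mathbb{D}(\mathbf{U}^n)\widehat{\mathbf{U}}^n,\widehat{\mathbf{U}}^n\bigr)_h = 0,
\]
which is the key observation that makes momentum conservation work at all. The linear term then telescopes across two time steps.

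More precisely, I would first compute
\[
\bigl((\mathbb{I}-\sigma\mathbb{D}_2)\delta_t\mathbf{U}^n,\,2\widehat{\mathbf{U}}^n\bigr)_h
= \frac{1}{2\tau}\bigl((\mathbb{I}-\sigma\mathbb{D}_2)(\mathbf{U}^{n+1}-\mathbf{U}^{n-1}),\,\mathbf{U}^{n+1}+\mathbf{U}^{n-1}\bigr)_h.
\]
Because $\mathbb{I}-\sigma\mathbb{D}_2$ is real symmetric, the right-hand side reduces to
\[
\frac{1}{2\tau}\Bigl(\|\mathbf{U}^{n+1}\|_h^2 + \sigma|\mathbf{U}^{n+1}|_h^2 - \|\mathbf{U}^{n-1}\|_h^2 - \sigma|\mathbf{U}^{n-1}|_h^2\Bigr)
= \frac{1}{2\tau}\bigl({\mathcal{I}_{2_h}}^{n+1} - {\mathcal{I}_{2_h}}^{n-1}\bigr),
\]
so the scheme \eqref{full-discr-LILF} gives the two-step conservation identity
\[
{\mathcal{I}_{2_h}}^{n+1} = {\mathcal{I}_{2_h}}^{n-1}, \qquad n \geq 1.
\]

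The obstacle, such as it is, lies in turning this two-step identity into the full statement ${\mathcal{I}_{2_h}}^n \equiv {\mathcal{I}_{2_h}}^0$: the leap-frog recursion only couples same-parity indices, so it yields ${\mathcal{I}_{2_h}}^{2k} = {\mathcal{I}_{2_h}}^0$ and ${\mathcal{I}_{2_h}}^{2k+1} = {\mathcal{I}_{2_h}}^1$ separately, and the two parities must be glued together by a separate argument at the initial step. This is precisely why the theorem's hypothesis invokes the initializer \eqref{order-2-scheme} rather than an arbitrary starting value $\mathbf{U}^1$. The argument already carried out in the proof of Theorem \ref{t-S1-G2-LICN} shows that \eqref{order-2-scheme}, being itself a linear-implicit Crank-Nicolson step with the anti-symmetric $\mathbb{D}(\mathbf{U}^{1/2})$, enforces ${\mathcal{I}_{2_h}}^1 = {\mathcal{I}_{2_h}}^0$.

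Finally I would combine the two pieces: a straightforward induction on $k$ using ${\mathcal{I}_{2_h}}^{n+1} = {\mathcal{I}_{2_h}}^{n-1}$ gives ${\mathcal{I}_{2_h}}^{2k} = {\mathcal{I}_{2_h}}^0$ and ${\mathcal{I}_{2_h}}^{2k+1} = {\mathcal{I}_{2_h}}^1$, and the initializer identity ${\mathcal{I}_{2_h}}^1 = {\mathcal{I}_{2_h}}^0$ unifies the two parities into the desired ${\mathcal{I}_{2_h}}^n \equiv {\mathcal{I}_{2_h}}^0$ for all $n\geq 0$. No further estimates are needed, since the proof is purely algebraic and relies only on the symmetry of $\mathbb{I}-\sigma\mathbb{D}_2$ and the anti-symmetry of $\mathbb{D}(\cdot)$ noted in Section \ref{semi-discrete-system}.
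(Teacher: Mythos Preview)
Your proposal is correct and follows exactly the approach the paper intends: the paper's own proof simply reads ``The proof is analogous to that of Theorem \ref{t-S1-G2-LICN} and thus omitted here,'' and your argument is precisely that analogy carried out in detail. Your observation about the parity issue and the role of the initializer \eqref{order-2-scheme} in gluing the even and odd subsequences together is the right way to complete the omitted details.
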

\begin{proof}
	The proof is analogous to that of Theorem \ref{t-S1-G2-LICN} and thus omitted here.
\end{proof}

\begin{theorem}\label{t-S2-unique}
	LLF-MP \eqref{full-discr-LILF} is uniquely solvable.
\end{theorem}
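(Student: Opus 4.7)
The plan is to mimic the proof of Theorem \ref{t-S1-unique}. Rearranging \eqref{full-discr-LILF}, the leap-frog scheme, for each $n\geq 1$, expresses $\mathbf{U}^{n+1}$ as the solution of the linear system
\begin{align*}
\mathbb{B}\,\mathbf{U}^{n+1} = \mathbf{c},
\end{align*}
where
\begin{align*}
\mathbb{B} = (\mathbb{I}-\sigma\mathbb{D}_2) + \tau\,\mathbb{D}(\mathbf{U}^{n}),\qquad
\mathbf{c} = \bigl[(\mathbb{I}-\sigma\mathbb{D}_2) - \tau\,\mathbb{D}(\mathbf{U}^{n})\bigr]\mathbf{U}^{n-1}.
\end{align*}
Since $\mathbf{U}^{0}$ is the initial data and $\mathbf{U}^{1}$ is obtained from the scheme \eqref{order-2-scheme}, whose unique solvability has already been established in Theorem \ref{t-S1-unique}, it suffices to prove that $\mathbb{B}$ is invertible for every $n\geq 1$.

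To this end I would argue that $\mathbb{B}\mathbf{x}=\mathbf{0}$ forces $\mathbf{x}=\mathbf{0}$. Taking the inner product with $\mathbf{x}$ and using the anti-symmetry of $\mathbb{D}(\mathbf{U}^{n})$ (which follows, as in section \ref{semi-discrete-system}, from the anti-symmetry of $\mathbb{D}_1$ and the identity $\mathrm{diag}(\mathbf{U}^n)\mathbb{D}_1 + \mathbb{D}_1\mathrm{diag}(\mathbf{U}^n)$ producing a skew matrix when combined appropriately) to kill the cross term, one obtains
\begin{align*}
0 = \mathbf{x}^{T}\mathbb{B}\mathbf{x} = \mathbf{x}^{T}(\mathbb{I}-\sigma\mathbb{D}_2)\mathbf{x}.
\end{align*}
Because $\mathbb{D}_2$ is real symmetric and negative semi-definite, the matrix $\mathbb{I}-\sigma\mathbb{D}_2$ is symmetric positive definite for any $\sigma>0$, so $\mathbf{x}=\mathbf{0}$ and $\mathbb{B}$ is nonsingular.

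There is essentially no obstacle beyond what already appeared in Theorem \ref{t-S1-unique}: the leap-frog scheme at the step $n\mapsto n+1$ is structurally identical to the LCN-MP update, with $\widehat{\mathbf{U}}^{n+1/2}$ replaced by $\mathbf{U}^{n}$ and the averaging done between levels $n+1$ and $n-1$ rather than $n+1$ and $n$. The anti-symmetry argument therefore goes through verbatim, and the only thing one must be careful about is to invoke the solvability of \eqref{order-2-scheme} separately to launch the three-level recursion; after that the induction on $n$ is immediate. I would conclude the proof with a one-line reference back to Theorem \ref{t-S1-unique} for brevity.
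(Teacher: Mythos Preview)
Your argument is correct and is exactly what the paper intends: its own proof reads simply ``The proof is similar to Theorem \ref{t-S1-unique} and is thus omitted,'' and your write-up fills in precisely those details. One small inaccuracy worth flagging: Theorem \ref{t-S1-unique} concerns the linear scheme \eqref{full-discr-LICN}, not the nonlinear initialization \eqref{order-2-scheme}, so the solvability of \eqref{order-2-scheme} is not in fact established there; since the statement of Theorem \ref{t-S2-unique} refers only to \eqref{full-discr-LILF} for $n\geq 1$, that aside is unnecessary for the proof anyway.
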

\begin{proof}
	The proof is similar to Theorem \ref{t-S1-unique} and is thus omitted.
\end{proof}

Both the schemes LCN-MP \eqref{full-discr-LICN} and LLF-MP \eqref{full-discr-LILF} are second order in time and high order in space. The two schemes are linear-implicit, which implies they are very cheap in the numerical calculation.  In what follows,   we mainly show the
analysis for LCN-MP by the standard energy method while the error estimate of LLF-MP can be obtained similarly and thus is omitted.

\section{Prior estimate and convergence analysis}\label{conver-results}
In this section, we analyze the error estimate of LCN-MP  in detail, while LLF-MP can be similarly discussed. Similar to finite element analysis, error estimate of pseudo-spectral scheme relies on the interpolation and the projection theory. We first introduce several notations and some basis results.

Let $C_p^{\infty}(\Omega)$ be a set of infinitely differentiable functions with period $L$, defined on $\mathbb{R}$, and $H_p^r(\Omega)$ is the closure of $C_p^{\infty}(\Omega)$ in $H^r(\Omega)$.
Let $\Omega=[a,b]$, $L^2(\Omega)$ with the inner product $(\cdot,\cdot)$ and the term $\|\cdot\|$. For any positive integer $r$, the semi-norm and the norm of $H^r(\Omega)$ are denoted by $|\cdot|_r$ and $\|\cdot\|_r$, respectively. In this section, $\|\cdot\|_0$ is denoted by $\|\cdot\|$ for simplicity.
For even $N$, we defined the projection space $S_N$ and the interpolation space $S_N^{'}$, respectively,
\begin{align*}
S_N=\left\{u:u(x)=\sum_{|k|\leq N/2}\hat{u}_ke^{ik\mu(x-a)}\right\},\quad
S^{'}_N=\left\{u:u(x)={\sum_{|k|\leq N/2}}^{''}\hat{u}_ke^{ik\mu(x-a)},\hat{u}_{-N/2}=\hat{u}_{N/2}\right\},
\end{align*}
where the summation $\sum^{''}$ is defined by
\begin{align*}
{\sum_{|k|\leq N/2}}^{''}\phi_k=\dfrac{1}{2}\phi_{-\frac{N}{2}}+\sum_{|k|< N/2}\phi_k+\dfrac{1}{2}\phi_{\frac{N}{2}}.
\end{align*}
\begin{remark}
	It is shown easily that
	\begin{align*}
	&S_N^{'}\subseteq S_N,\quad S_{N-2}\subseteq S_N^{'},\\[0.3cm]
	&P_Nu=u,\quad \forall\; u\in S_N,\\[0.3cm]
	&I_Nu=u,\quad \forall\; u\in S^{'}_N,\\[0.3cm]
	&P_N\partial_{x}u=\partial_{x}P_Nu,\quad I_N\partial_{x}u\neq\partial_{x}I_Nu,
	\end{align*}
	where $P_N:L^2(\Omega)\rightarrow S_N$ denotes the orthogonal projection operator and
	$I_N:C(\Omega)\rightarrow S^{'}_N$ denotes the interpolation operator.
\end{remark}
Next, we will introduce some useful lemmas, which play an important role in the proof of the convergence.

\begin{lemma}[\cite{GongJCP2017}]\label{norm-equality}
	For any function $u\in S_N^{'}$, we have $\|u\|\leq \|u\|_{h}\leq \sqrt{2}\|u\|$.
\end{lemma}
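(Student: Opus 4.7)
The plan is to derive both $\|u\|^2$ and $\|u\|_h^2$ in closed form as weighted sums of the Fourier coefficients $|\hat{u}_k|^2$, using continuous orthogonality for the first and discrete orthogonality for the second, and then to read off the two inequalities by direct comparison. I would start by writing
$$u(x)={\sum_{|k|\le N/2}}''\hat{u}_k\, e^{ik\mu(x-a)},\qquad \hat{u}_{-N/2}=\hat{u}_{N/2}.$$

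For the continuous norm, the orthogonality $\int_a^b e^{ik\mu(x-a)}e^{-il\mu(x-a)}\,dx=L\,\delta_{kl}$ on the modes $k,l\in\{-N/2,\dots,N/2\}$, combined with the half weights in the $''$ convention and the constraint $\hat{u}_{-N/2}=\hat{u}_{N/2}$, yields
$$\|u\|^2=L\Bigl(\sum_{|k|<N/2}|\hat{u}_k|^2+\tfrac{1}{2}|\hat{u}_{N/2}|^2\Bigr).$$
For the discrete norm I would evaluate $u$ at the collocation nodes $x_j=a+jh$. Since $\mu h=2\pi/N$ gives $e^{\pm i(N/2)\mu jh}=(-1)^j$, the two endpoint modes collapse into a single Nyquist mode, producing
$$u_j=\hat{u}_{N/2}(-1)^j+\sum_{|k|<N/2}\hat{u}_k\, e^{2\pi ikj/N}.$$
Applying the discrete orthogonality $\sum_{j=0}^{N-1}e^{2\pi i(k-l)j/N}=N\delta_{kl}$ (valid on $k,l\in\{-N/2+1,\dots,N/2\}$ since these indices are all distinct mod $N$), together with $h=L/N$, I obtain
$$\|u\|_h^2=L\Bigl(\sum_{|k|<N/2}|\hat{u}_k|^2+|\hat{u}_{N/2}|^2\Bigr).$$
Subtracting the two expressions then yields $\|u\|_h^2-\|u\|^2=\tfrac{L}{2}|\hat{u}_{N/2}|^2\ge 0$, which proves the left inequality $\|u\|\le\|u\|_h$; and since $\tfrac{L}{2}|\hat{u}_{N/2}|^2\le\|u\|^2$, we also get $\|u\|_h^2\le 2\|u\|^2$, i.e.\ $\|u\|_h\le\sqrt{2}\,\|u\|$.

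The main obstacle, and the place where the proof ceases to be routine bookkeeping, is the careful handling of the Nyquist mode under the $''$ summation convention. On the continuous side the two symmetric endpoint terms each carry weight $1/2$ and together contribute only $\tfrac{1}{2}|\hat{u}_{N/2}|^2$, whereas on the grid they combine coherently (both exponentials evaluate to $(-1)^j$) into a single full-weight mode contributing $|\hat{u}_{N/2}|^2$. This continuous-versus-discrete asymmetry is precisely what opens the gap $\tfrac{L}{2}|\hat{u}_{N/2}|^2$ responsible for the $\sqrt{2}$ factor; any miscounting of the Nyquist coefficient would destroy the sharp constant and spoil the boundedness argument that later drives the $L^\infty$ error estimate.
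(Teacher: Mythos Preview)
Your argument is correct. The paper does not supply its own proof of this lemma; it is quoted verbatim from \cite{GongJCP2017}, so there is no in-paper derivation to compare against. Your computation via the Fourier expansion and the continuous/discrete Parseval identities, with the Nyquist mode bookkeeping producing exactly the gap $\tfrac{L}{2}|\hat u_{N/2}|^2$, is the standard route and is presumably what the cited reference contains.
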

\begin{lemma}[\cite{Canuto1982}]\label{approximation-lemma}
	If $0\leq l\leq r$ and $u\in H_p^r(\Omega)$, then
	\begin{align}
	&\|P_Nu-u\|_l\leq CN^{l-r}|u|_r,\label{appro-lem-eq1}\\[0.3cm]
	&\|P_Nu\|_l\leq C\|u\|_l,\label{appro-lem-eq2}
	\end{align}
	in addition, if $r>1/2$, we have
	\begin{align}
	&\|I_Nu-u\|_l\leq CN^{l-r}|u|_r,\label{appro-lem-eq3}\\[0.3cm]
	&\|I_Nu\|_l\leq C\|u\|_l.\label{appro-lem-eq4}
	\end{align}
\end{lemma}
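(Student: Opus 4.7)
The plan is to work in the Fourier coefficient picture, since the natural norms on $H_p^r(\Omega)$ are equivalent to weighted $\ell^2$ norms on the Fourier coefficients. Writing $u = \sum_{k\in\mathbb{Z}} \hat{u}_k e^{ik\mu(x-a)}$, the semi-norm and full norm on $H_p^r$ are (up to constants depending only on $\mu$) equivalent to $\bigl(\sum_k |k|^{2r}|\hat{u}_k|^2\bigr)^{1/2}$ and $\bigl(\sum_k (1+|k|^2)^r|\hat{u}_k|^2\bigr)^{1/2}$. All four inequalities then become statements about truncation and aliasing of Fourier series.

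First I would handle the projection estimates \eqref{appro-lem-eq1}--\eqref{appro-lem-eq2}. Since $P_N$ is the orthogonal truncation to $|k|\le N/2$, I have $u-P_Nu=\sum_{|k|>N/2}\hat{u}_k e^{ik\mu(x-a)}$, so by Parseval
\begin{equation*}
\|u-P_Nu\|_l^2 \;\le\; C\sum_{|k|>N/2}|k|^{2l}|\hat{u}_k|^2 \;=\; C\sum_{|k|>N/2}|k|^{2l-2r}\,|k|^{2r}|\hat{u}_k|^2 \;\le\; C\Bigl(\tfrac{N}{2}\Bigr)^{2(l-r)}|u|_r^2,
\end{equation*}
which gives \eqref{appro-lem-eq1}. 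The stability bound \eqref{appro-lem-eq2} is immediate since truncation only decreases each $\ell^2$ weighted sum.

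Next I would tackle the interpolation estimates \eqref{appro-lem-eq3}--\eqref{appro-lem-eq4}. The key identity is the aliasing formula: if $\tilde{u}_k$ are the discrete Fourier coefficients of $I_Nu$ then $\tilde{u}_k = \hat{u}_k + \sum_{m\neq 0}\hat{u}_{k+mN}$ for $|k|<N/2$, with the usual averaging convention at $|k|=N/2$. Splitting $I_Nu - u = (I_Nu - P_Nu) + (P_Nu - u)$, the projection part is already controlled by \eqref{appro-lem-eq1}, while the aliasing part $I_Nu-P_Nu$ is a trigonometric polynomial of degree $\le N/2$ whose $k$-th coefficient is $\sum_{m\neq 0}\hat{u}_{k+mN}$. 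I would bound this by Cauchy--Schwarz,
\begin{equation*}
\Bigl|\sum_{m\neq 0}\hat{u}_{k+mN}\Bigr|^2 \;\le\; \Bigl(\sum_{m\neq 0}|k+mN|^{-2r}\Bigr)\Bigl(\sum_{m\neq 0}|k+mN|^{2r}|\hat{u}_{k+mN}|^2\Bigr),
\end{equation*}
and the hypothesis $r>1/2$ is exactly what makes the first factor summable and of order $N^{-2r}$ uniformly in $k$ with $|k|\le N/2$. Summing the resulting estimate weighted by $|k|^{2l}$ (with $l\le r$) over $|k|\le N/2$ yields $\|I_Nu-P_Nu\|_l \le CN^{l-r}|u|_r$. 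Combined with \eqref{appro-lem-eq1} this gives \eqref{appro-lem-eq3}, and \eqref{appro-lem-eq4} follows by the triangle inequality $\|I_Nu\|_l \le \|I_Nu-u\|_l + \|u\|_l \le C\|u\|_l$ once \eqref{appro-lem-eq3} is in hand.

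The main obstacle will be the aliasing argument for \eqref{appro-lem-eq3}: controlling $\sum_{m\neq 0}|k+mN|^{-2r}$ uniformly in $k\in[-N/2,N/2]$ requires care near $k=\pm N/2$, where $|k+mN|$ can be as small as $N/2$ for two values of $m$. The restriction $r>1/2$ is sharp here, since without it the tail sum need not converge, so the regularity threshold enters precisely at this step; everything else reduces to orthogonality and the Parseval equivalence of Sobolev and weighted $\ell^2$ norms.
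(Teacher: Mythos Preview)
The paper does not give its own proof of this lemma; it is simply quoted from Canuto--Quarteroni \cite{Canuto1982}. Your sketch is precisely the standard argument found there: Parseval for the projection bounds \eqref{appro-lem-eq1}--\eqref{appro-lem-eq2}, and the aliasing formula combined with Cauchy--Schwarz (with $r>1/2$ ensuring summability of $\sum_{m\neq 0}|k+mN|^{-2r}$) for the interpolation bounds \eqref{appro-lem-eq3}--\eqref{appro-lem-eq4}. The only step you left implicit is that when summing $|k|^{2l}$ times the aliasing bound over $|k|\le N/2$, one uses $|k|^{2l}\le (N/2)^{2l}$ together with the fact that the index sets $\{k+mN:m\neq 0\}$ are disjoint for distinct $k$, yielding the factor $N^{2l}$ and hence the claimed $N^{l-r}$; this is routine and not a gap.
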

\begin{lemma}[\cite{GongJCP2017}]\label{pro-error-estimate}
	For $u\in H^r_p(\Omega),\ r>1$, let $u^{*}=P_{N-2}u$, then $\|u^{*}-u\|_{h}\leq CN^{-r}|u|_r$.
\end{lemma}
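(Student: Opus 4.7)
The plan is to reduce the discrete-norm estimate to an $L^2$ estimate by exploiting two facts about the interpolation space: (i) $u^{\ast}=P_{N-2}u$ lies in $S_{N-2}\subseteq S_N'$, and (ii) $I_N u\in S_N'$ agrees with $u$ at every collocation point. The second observation is the crux: since $(I_N u)(x_j)=u(x_j)$ for all $j$, the discrete seminorm cannot distinguish $u$ from $I_N u$, which lets me replace the unwieldy quantity $\|u^{\ast}-u\|_h$ by $\|u^{\ast}-I_N u\|_h$, a quantity living entirely inside $S_N'$.

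First I would argue the identity $\|u^{\ast}-u\|_h=\|u^{\ast}-I_N u\|_h$ directly from the definition of the discrete norm. Next, because $u^{\ast}-I_N u\in S_N'$, I would apply Lemma \ref{norm-equality} to bound
\begin{equation*}
\|u^{\ast}-I_N u\|_h\leq \sqrt{2}\,\|u^{\ast}-I_N u\|.
\end{equation*}
Then a triangle inequality splits the right-hand side as
\begin{equation*}
\|u^{\ast}-I_N u\|\leq \|P_{N-2}u-u\|+\|I_N u-u\|.
\end{equation*}

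Finally I would invoke Lemma \ref{approximation-lemma}: taking $l=0$ in \eqref{appro-lem-eq1} with $N-2$ in place of $N$ yields $\|P_{N-2}u-u\|\leq C(N-2)^{-r}|u|_r\leq CN^{-r}|u|_r$ for $N$ large enough, and \eqref{appro-lem-eq3} (applicable since $r>1>1/2$) yields $\|I_N u-u\|\leq CN^{-r}|u|_r$. Combining these two bounds with the two reductions above delivers $\|u^{\ast}-u\|_h\leq CN^{-r}|u|_r$.

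I do not anticipate a genuine obstacle here; the only nontrivial point is remembering that $\|\cdot\|_h$ blindly samples nodal values, so the $u\leftrightarrow I_N u$ swap is free, and that the norm equivalence of Lemma \ref{norm-equality} requires its argument to belong to $S_N'$, which is why introducing the auxiliary function $I_N u$ (rather than, say, comparing $u^{\ast}$ and $u$ directly) is essential.
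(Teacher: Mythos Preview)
Your argument is correct. The paper does not actually supply a proof of this lemma; it is quoted from \cite{GongJCP2017} without demonstration. Your route---swapping $u$ for $I_Nu$ at no cost in the nodal norm, then invoking Lemma~\ref{norm-equality} on $u^{\ast}-I_Nu\in S_N'$ and splitting via the triangle inequality and Lemma~\ref{approximation-lemma}---is exactly the standard derivation, and it mirrors the technique the paper itself employs in its proof of the companion result Lemma~\ref{semi-pro-error-estimate} (see the chain beginning with $\|\partial_x(I_Nu-u)\|_h=\|I_N[\partial_x(I_Nu-u)]\|_h$ there).
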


\begin{lemma}\label{semi-pro-error-estimate}
	For $u\in H^{r+1}_p(\Omega)$, $r>1/2$, let  $u^{*}=P_{N-2}u$, then
	\begin{align}
	&|u^*-u|_{h}\leq CN^{-r}|u|_{r+1}, \label{semi-norm-projec-eq}\\[0.3cm]
	&\|\partial_x(I_Nu-u)\|_h\leq CN^{-r}|u|_{r+1}.\label{semi-norm-projec-eq2}
	\end{align}
\end{lemma}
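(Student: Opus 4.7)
The plan is to reduce both discrete norms to continuous norms on functions that lie in $S_N'$, and then to invoke Lemma \ref{approximation-lemma}. Two small spectral identities will be isolated first: for every $w\in S_N'$ written as $w(x)=\alpha\cos(\mu(N/2)(x-a))+\sum_{|k|<N/2}\hat w_ke^{ik\mu(x-a)}$, a direct discrete-orthogonality computation (using $\sin(\mu(N/2)(x_j-a))=0$ at every node $x_j$, and that $\mathbb{D}_2\mathbf w_j=\partial_x^2 w(x_j)$) yields
\begin{equation*}
|\mathbf{w}|_h^2 \;=\; L\mu^2\sum_{|k|<N/2}k^2|\hat w_k|^2+L\mu^2(N/2)^2|\alpha|^2 \;\leq\; 2\,\|\partial_x w\|^2,
\qquad
\|\partial_x w\|_h^2 \;=\; L\mu^2\sum_{|k|<N/2}k^2|\hat w_k|^2 \;\leq\; \|\partial_x w\|^2.
\end{equation*}
These are the two comparison tools I need.

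To prove \eqref{semi-norm-projec-eq}, I would first observe that because $I_Nu$ agrees with $u$ at every grid point, the grid functions $\{(u^*-u)_j\}$ and $\{(u^*-I_Nu)_j\}$ coincide, so $|u^*-u|_h=|u^*-I_Nu|_h$. Since both $u^*=P_{N-2}u\in S_{N-2}\subset S_N'$ and $I_Nu\in S_N'$, the first identity above applied to $w:=u^*-I_Nu$ gives $|u^*-u|_h\le\sqrt{2}\,\|\partial_x(u^*-I_Nu)\|$. A triangle inequality
$\|\partial_x(u^*-I_Nu)\|\leq |P_{N-2}u-u|_1+|I_Nu-u|_1$
together with Lemma \ref{approximation-lemma} (with $l=1$ and regularity $r+1$) then delivers the desired bound $CN^{-r}|u|_{r+1}$.

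To prove \eqref{semi-norm-projec-eq2}, I would split via $u^*$:
\begin{equation*}
\|\partial_x(I_Nu-u)\|_h \;\leq\; \|\partial_x(I_Nu-u^*)\|_h+\|\partial_x(u^*-u)\|_h.
\end{equation*}
For the first summand, $I_Nu-u^*\in S_N'$, so the second identity above gives
$\|\partial_x(I_Nu-u^*)\|_h\leq \|\partial_x(I_Nu-u^*)\|\leq |I_Nu-u|_1+|u-u^*|_1\leq CN^{-r}|u|_{r+1}$
again by Lemma \ref{approximation-lemma}. For the second summand, I would exploit the ``grid equality'' $\|f\|_h=\|I_Nf\|_h$ and then Lemma \ref{norm-equality}, since $I_N\partial_x(u^*-u)\in S_N'$:
\begin{equation*}
\|\partial_x(u^*-u)\|_h \;=\; \|I_N\partial_x(u^*-u)\|_h \;\leq\; \sqrt{2}\,\|I_N\partial_x(u^*-u)\|.
\end{equation*}
Because $\partial_xu^*\in S_{N-2}\subset S_N'$, one has $I_N\partial_xu^*=\partial_xu^*$, so the right-hand side equals $\sqrt{2}\|\partial_xu^*-I_N\partial_xu\|\leq \sqrt 2(|u^*-u|_1+\|\partial_xu-I_N\partial_xu\|)$. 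The first piece is controlled by the projection estimate \eqref{appro-lem-eq1}, and the second by the interpolation estimate \eqref{appro-lem-eq3} applied to $\partial_xu\in H_p^r$ (the hypothesis $r>1/2$ is precisely what is needed to invoke the latter), yielding $CN^{-r}|u|_{r+1}$.

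The main technical obstacle is the pair of spectral inequalities on $S_N'$ stated at the outset. Their proofs are not mere invocations of Parseval because the discrete inner product overweights the Nyquist cosine mode (and kills the Nyquist sine at grid nodes); the constraint $\hat w_{-N/2}=\hat w_{N/2}$ built into $S_N'$ is exactly what is needed to turn these nonstandard quadrature contributions into clean inequalities with the continuous $L^2$ norm of $\partial_x w$. Once those two comparisons are in place, the rest is a textbook triangle-inequality reduction to Lemma \ref{approximation-lemma}.
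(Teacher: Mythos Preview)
Your argument is correct, and the two spectral comparison identities you isolate on $S_N'$ are valid (the discrete orthogonality computation goes through exactly as you sketch, with the Nyquist sine vanishing at the nodes and the Nyquist cosine contributing the extra factor $2$). The route, however, differs noticeably from the paper's.

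For \eqref{semi-norm-projec-eq} the paper does \emph{not} pass through $I_Nu$ or any Parseval identity on $S_N'$. Instead it applies Cauchy--Schwarz directly to the definition $|u^*-u|_h^2=(-\mathbb{D}_2(u^*-u),u^*-u)_h$, so that
\[
|u^*-u|_h^2\le \|\partial_{xx}I_N(u^*-u)\|_h\,\|u^*-u\|_h,
\]
and then controls each factor separately: the second via Lemma~\ref{pro-error-estimate}, the first by the chain $\|I_N(\cdot)\|_h\le\sqrt2\,\|I_N(\cdot)\|\le C\|\cdot\|$ (Lemma~\ref{norm-equality} plus the $I_N$-stability in Lemma~\ref{approximation-lemma}) applied to $\partial_{xx}I_N(u^*-u)$, ultimately reducing to $\|u^*-u\|_2\le CN^{1-r}|u|_{r+1}$. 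Your approach trades this two-factor product estimate for a single direct comparison $|u^*-I_Nu|_h\le\sqrt2\,|u^*-I_Nu|_1$, which is cleaner and yields the exponent $N^{-r}$ in one stroke, at the cost of having to justify the Parseval-type inequality on $S_N'$ from scratch.

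For \eqref{semi-norm-projec-eq2} the situation is reversed: the paper's argument is a short one-liner,
\[
\|\partial_x(I_Nu-u)\|_h=\|I_N[\partial_x(I_Nu-u)]\|_h\le\sqrt2\,\|I_N[\partial_x(I_Nu-u)]\|\le C\|\partial_x(I_Nu-u)\|\le C\|I_Nu-u\|_1,
\]
followed by the interpolation estimate. Your split through $u^*$ is correct but unnecessary; in fact the manipulation you perform on the second summand $\|\partial_x(u^*-u)\|_h$ (grid equality $\|f\|_h=\|I_Nf\|_h$, then Lemma~\ref{norm-equality}) is precisely the tool that, applied directly to $\partial_x(I_Nu-u)$ together with the $I_N$-stability bound \eqref{appro-lem-eq4}, gives the paper's shorter proof without any splitting.
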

\begin{proof}
	Since
	\begin{align}\label{projec-semi-esti-eq1}
	\begin{split}
	|u^*-u|^2_{h}&=(-\mathbb{D}_2(u^*-u),u^*-u)_h\\[0.3cm]
	&\leq\|-\mathbb{D}_2(u^*-u)\|_{h}\|u^*-u\|_{h}\\[0.3cm]
	&=\|\partial_{xx}I_N(u^*-u)\|_{h}\|u^*-u\|_{h}.
	\end{split}
	\end{align}
	We remark that
	\begin{align}\label{projec-semi-esti-eq2}
	\begin{split}
	\|\partial_{xx}(I_N(u^*-u))\|_{h}&=\|I_N(\partial_{xx}(I_N(u^*-u)))\|_{h}\\[0.3cm]
	&\leq\sqrt{2}\|I_N(\partial_{xx}(I_N(u^*-u)))\| \\[0.3cm]
	&\leq C\|\partial_{xx}(I_N(u^*-u))\|\\[0.3cm]
	&\leq C\|I_N(u^*-u)\|_{2}\\[0.3cm]
	&\leq C\|u^*-u\|_2\leq CN^{1-r}|u|_{r+1},
	\end{split}
	\end{align}
	where the first inequality follows from Lemma \ref{norm-equality}, the second  and the fourth inequality follow from  \eqref{appro-lem-eq4} and the last inequality follows from \eqref{appro-lem-eq1}.
	Substituting \eqref{projec-semi-esti-eq2} into \eqref{projec-semi-esti-eq1} and using Lemma \ref{pro-error-estimate} leads to \eqref{semi-norm-projec-eq}.
	
	Using  \eqref{appro-lem-eq1}, \eqref{appro-lem-eq3} and Lemma \ref{norm-equality} once again, we can easily deduce
	\begin{align*}
	\|\partial_x(I_Nu-u)\|_h&=\|I_N[\partial_x(I_Nu-u)]\|_h\leq\sqrt{2}\|I_N[\partial_x(I_Nu-u)]\| \\[0.3cm]
	&\leq C\|\partial_x(I_Nu-u)\|\leq C\|I_Nu-u\|_1\leq CN^{-r}|u|_{r+1}.
	\end{align*}
	The proof is completed.
\end{proof}

\begin{lemma}[Discrete Sobolev inequality \cite{ZYL1990}] \label{Discr-Sobol-ineq}
	For any discrete functions $\mathbf{U}\in V_h$, there exists
	\begin{align*}
	\|\mathbf{U}\|^2_{\infty,h}\leq 2\|\mathbf{U}\|_{h}\cdot \|\delta_x^+\mathbf{U}\|_{h}+\dfrac{\|\mathbf{U}\|_{h}^2}{L}.
	\end{align*}
\end{lemma}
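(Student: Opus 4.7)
The plan is to combine a telescoping identity, the discrete Cauchy--Schwarz inequality, and a simple averaging argument adapted to the periodic lattice.

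First I would pick an index $m\in\{0,1,\ldots,N-1\}$ that realizes the maximum, so that $U_m^2=\|\mathbf{U}\|_{\infty,h}^2$, and then locate an auxiliary index $j_0$ whose square is controlled by the discrete $L^2$ average. Since the minimum of $\{U_j^2\}_{j=0}^{N-1}$ cannot exceed its arithmetic mean, such a $j_0$ exists with $U_{j_0}^2\le \tfrac{1}{N}\sum_{j=0}^{N-1}U_j^2=\|\mathbf{U}\|_h^2/L$, using $hN=L$.

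Next I would telescope from $j_0$ to $m$. After relabeling by periodicity so that $j_0\le m$, one has $U_m^2-U_{j_0}^2=\sum_{k=j_0}^{m-1}(U_{k+1}^2-U_k^2)$. Since this directed path on the discrete circle uses each oriented edge at most once, the triangle inequality yields $U_m^2\le U_{j_0}^2+\sum_{k=0}^{N-1}|U_{k+1}^2-U_k^2|$.

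Finally I would estimate the edge sum via the factorization $U_{k+1}^2-U_k^2=h\,\delta_x^+ U_k\cdot(U_{k+1}+U_k)$, together with $|U_{k+1}+U_k|\le|U_k|+|U_{k+1}|$. This reduces the task to bounding $h\sum_k|\delta_x^+ U_k|\,|U_k|$ and $h\sum_k|\delta_x^+ U_k|\,|U_{k+1}|$; discrete Cauchy--Schwarz controls each by $\|\delta_x^+\mathbf{U}\|_h\,\|\mathbf{U}\|_h$, where periodicity is used to identify $h\sum_k|U_{k+1}|^2=h\sum_k|U_k|^2=\|\mathbf{U}\|_h^2$. Hence $\sum_{k=0}^{N-1}|U_{k+1}^2-U_k^2|\le 2\|\mathbf{U}\|_h\,\|\delta_x^+\mathbf{U}\|_h$, which combined with the first two steps gives exactly the stated inequality. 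The argument is entirely elementary; the main point of care is the periodic bookkeeping, ensuring that each oriented edge $k\to k+1$ is counted at most once and that the index $N$ is identified with $0$ via the periodic boundary condition. I do not foresee any substantive obstacle beyond this.
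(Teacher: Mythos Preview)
Your argument is correct and is essentially the standard proof of this discrete Sobolev inequality. Note, however, that the paper does not supply its own proof of this lemma: it is quoted directly from the reference \cite{ZYL1990} without argument, so there is nothing in the paper to compare against. Your telescoping-plus-averaging approach is precisely the classical one found in that source.
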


\begin{lemma}[Discrete Gronwall inequality \cite{ZYL1990}]\label{discr-Gron-ineq}
	Suppose that the nonnegative discrete function $\{\omega^n|n=0,1,2,\cdots,N_t;\; N_t\tau=T\}$ satisfies the inequality
	\begin{align*}
	\omega^n\leq A+B\tau \sum_{k=1}^{N_t}\omega^{k},\quad 1 \leq n\leq N_t,
	\end{align*}
	where $A$ and $B$ are nonnegative constants. Then
	\begin{align*}
	\max_{1\leq n\leq N_t}|\omega^n|\leq Ae^{2BT},
	\end{align*}
	where $\tau$ is sufficiently small, such that $B\tau\leq 1/2$.
\end{lemma}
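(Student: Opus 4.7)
The plan is to reduce the implicit bound to a solvable linear recurrence by peeling off the $k=n$ term of the sum on the right-hand side, and then to convert the resulting geometric growth factor into an exponential one using the elementary inequality $(1-x)^{-1}\le e^{2x}$ valid on $[0,1/2]$, which is exactly where the assumption $B\tau\le 1/2$ enters. I read the sum on the right in the standard cumulative form $\sum_{k=1}^{n}\omega^k$, since this is the form for which the stated smallness of $B\tau$ is essential (and it is the form actually needed for the energy argument later in the paper).

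First, I would isolate $\omega^n$. Rewriting the hypothesis as
\begin{align*}
\omega^n \le A + B\tau\sum_{k=1}^{n-1}\omega^k + B\tau\,\omega^n
\end{align*}
and moving the last term to the left-hand side yields
\begin{align*}
(1-B\tau)\,\omega^n \le A + B\tau\sum_{k=1}^{n-1}\omega^k.
\end{align*}
Since $B\tau\le 1/2$ we have $1-B\tau\ge 1/2>0$, so dividing is legitimate and the bound is explicit in $\omega^1,\dots,\omega^{n-1}$.

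Second, I would compare $\omega^n$ with the sequence $\phi^n$ defined by the companion \emph{equality}
\begin{align*}
\phi^n = A + B\tau\sum_{k=1}^{n}\phi^k,\qquad \phi^0=A.
\end{align*}
Subtracting the consecutive equalities gives the two-term recurrence $(1-B\tau)\phi^n=\phi^{n-1}$, hence the closed form $\phi^n=A(1-B\tau)^{-n}$. Using the identity $A+B\tau\sum_{k=1}^{n-1}\phi^k=(1-B\tau)\phi^n$ together with the isolated bound from the previous step, a one-line induction on $n$ shows $\omega^n\le\phi^n$ for all $1\le n\le N_t$.

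Finally, the elementary inequalities $(1-B\tau)^{-1}\le 1+2B\tau \le e^{2B\tau}$, valid for $B\tau\in[0,1/2]$, give
\begin{align*}
\phi^n \le A\,e^{2Bn\tau} \le A\,e^{2BT},
\end{align*}
where the last step uses $n\tau\le N_t\tau=T$, and the conclusion follows by taking the maximum over $n$. I do not anticipate any real obstacle: the argument is entirely elementary and the only care required is to invoke $B\tau\le 1/2$ in precisely the two places where it is essential, namely the positivity of $1-B\tau$ (needed to divide safely) and the exponential bound $(1-B\tau)^{-1}\le e^{2B\tau}$ (needed to pass from $A(1-B\tau)^{-n}$ to $Ae^{2BT}$).
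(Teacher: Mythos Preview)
The paper does not supply its own proof of this lemma; it is quoted from \cite{ZYL1990} and used as a black box in the convergence argument. Your proof is the standard one and is correct. You also correctly note that the upper summation index must be read as $n$ rather than the fixed $N_t$ printed in the statement; this is confirmed by the way the lemma is actually invoked later in the paper, where the authors sum to $n$ before applying it.

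One minor remark on presentation: in your induction step you should spell out once that the base case $\omega^1\le\phi^1$ follows from $(1-B\tau)\omega^1\le A=(1-B\tau)\phi^1$, so there is no gap. Apart from that, the two places where $B\tau\le 1/2$ is used are exactly as you identify, and the chain $(1-B\tau)^{-1}\le 1+2B\tau\le e^{2B\tau}$ is valid on the stated range.
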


\subsection{Prior estimate}
\begin{theorem}\label{prior-estimate}
	Assume that the initial condition $u_0(x)=u(x,0)\in H_p^1=\{u(x)\in H^1:u(x)=u(x+L)\}$, then we have the following prior estimates
	\begin{align*}
	\|u\|_0\leq C,\quad \|u_x\|_0\leq C,\quad    \|u\|_{\infty}\leq C,
	\end{align*}
	for the exact solution of the RLW equation \eqref{model-eq} and the prior estimates
	\begin{align*}
	\|\mathbf{U}^n\|_{h}\leq C,\quad |\mathbf{U}^n|_{h}\leq C,\quad    \|\mathbf{U}^n\|_{\infty,h}\leq C,
	\end{align*}
	for the numerical solution of the scheme \eqref{semi-discr-syst1-equiv}.
\end{theorem}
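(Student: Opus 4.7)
The estimates rely entirely on the fact that the momentum $\mathcal{I}_2$ is an invariant of both the continuous and the discrete problems, together with a Sobolev-type embedding that converts an $H^1$--type control into an $L^\infty$ bound.

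For the continuous part, I would start by multiplying the PDE (\ref{model-eq}) by $u$ and integrating over $\Omega$. Using periodicity, the advection term $a\int u u_x\,dx$ vanishes, the nonlinear term $\int u (F'(u))_x\,dx = -\int (F(u))_x\,dx = 0$, and the dispersive term $-\sigma\int u u_{xxt}\,dx = \tfrac{\sigma}{2}\tfrac{d}{dt}\|u_x\|^2$. This yields $\tfrac{d}{dt}(\|u\|^2+\sigma\|u_x\|^2)=0$, i.e.\ $\mathcal{I}_2(t)\equiv \mathcal{I}_2(0)\le C\|u_0\|_1^2$. Since $\sigma>0$, this gives $\|u\|_0\le C$ and $\|u_x\|_0\le C$ uniformly in $t$, and the one-dimensional Sobolev embedding $H^1_p(\Omega)\hookrightarrow L^\infty(\Omega)$ gives $\|u\|_\infty\le C\|u\|_1\le C$.

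For the discrete part, the work has already been done by Theorem \ref{t-S1-G2-LICN} (or Theorem \ref{t-S1-G2-LILF} for LLF-MP), which asserts ${\mathcal{I}_{2_h}}^n = {\mathcal{I}_{2_h}}^0$ for all $n\ge 0$. The initial value $\mathcal{I}_{2_h}^0=\|\mathbf{U}^0\|_h^2+\sigma|\mathbf{U}^0|_h^2$ is bounded in terms of $\|u_0\|_1$: the first term is essentially a Riemann-sum approximation of $\|u_0\|^2$, while $|\mathbf{U}^0|_h^2=\|\mathbb{D}_1 I_N u_0\|_h^2$ (via Lemma \ref{semi-norm-equiv}) is controlled through the interpolation estimate (\ref{appro-lem-eq4}) and Lemma \ref{norm-equality}. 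Since $\sigma>0$, the conservation identity then yields simultaneously
\begin{equation*}
\|\mathbf{U}^n\|_h\le C,\qquad |\mathbf{U}^n|_h\le C,\qquad \forall\, n\ge 0.
\end{equation*}

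To conclude the $L^\infty$ estimate, I would feed these two bounds into the Discrete Sobolev inequality (Lemma \ref{Discr-Sobol-ineq}):
\begin{equation*}
\|\mathbf{U}^n\|_{\infty,h}^2\le 2\|\mathbf{U}^n\|_h\,\|\delta_x^+\mathbf{U}^n\|_h+\frac{\|\mathbf{U}^n\|_h^2}{L},
\end{equation*}
and then invoke the semi-norm equivalence (\ref{semi-norm-equiv-ineq1}) to replace $\|\delta_x^+\mathbf{U}^n\|_h$ by $|\mathbf{U}^n|_h$, giving $\|\mathbf{U}^n\|_{\infty,h}\le C$. The only real subtlety in the argument is the bound on $\mathcal{I}_{2_h}^0$, because the discrete semi-norm $|\cdot|_h$ is not literally a Riemann sum; the equivalence in Lemma \ref{semi-norm-equiv} together with Lemma \ref{norm-equality} is the mechanism that transfers continuous $H^1$ information to the Fourier pseudo-spectral grid, and I would expect to spend most of the bookkeeping there, whereas every subsequent step is direct consequence of the conservation law already established.
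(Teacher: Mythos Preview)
Your proposal is correct and follows essentially the same route as the paper: invoke the continuous invariant $\mathcal{I}_2$ together with the Sobolev embedding for the exact solution, and invoke the discrete momentum conservation (Theorem~\ref{t-S1-G2-LICN}) together with the semi-norm equivalence \eqref{semi-norm-equiv-ineq1} and the discrete Sobolev inequality (Lemma~\ref{Discr-Sobol-ineq}) for the numerical solution. The paper's proof is terser---it simply cites the invariant $\mathcal{I}_2$ from \eqref{conti-invar-eq} rather than re-deriving it, and it does not spell out why $\mathcal{I}_{2_h}^0$ is bounded---so your additional bookkeeping on $\mathcal{I}_{2_h}^0$ via Lemmas~\ref{norm-equality} and~\ref{semi-norm-equiv} is extra rigor rather than a different approach.
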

\begin{proof}
	By the continuous invariant $\mathcal{I}_2$ in \eqref{conti-invar-eq}, it is easy to prove that
	\begin{align*}
	\|u\|_0\leq C,\quad \|u_x\|_0\leq C.
	\end{align*}
	It follows from the Sobolev inequality that  $\|u\|_{\infty}\leq C$.
	
	Similarly, the discrete momentum conservation law in Theorem \ref{t-S1-G2-LICN} implies
	\begin{align*}
	\|\mathbf{U}^n\|_{h}\leq C,\quad |\mathbf{U}^n|_{h}\leq C.
	\end{align*}
	Then noticing \eqref{semi-norm-equiv-ineq1}, we get
	\begin{align*}
	\|\delta_x^+\mathbf{U}^n\|_{h}\leq C.
	\end{align*}
	Using Lemma \ref{Discr-Sobol-ineq} yields  $\|\mathbf{U}^n\|_{\infty,h}\leq C$. This completes the proof.
\end{proof}

\subsection{Convergence analysis}
For simplicity, we denote $u^n_{j} = u(x_j, t_n)$ and $U^n_j$ as the exact value of $u(x, t)$ and its numerical approximation at $(x_j, t_n)$, respectively, and set $f(u)=\frac{\gamma}{3}u \partial_xu$, $g(u)=\frac{\gamma}{3}\partial_x(u\cdot u)$. Then
the RLW equation \eqref{RLW-equivalent-eq} can be written as
\begin{align}\label{RLW-new-eq}
u_t-\sigma u_{xxt}+au_x+f(u)+g(u)=0.
\end{align}
Denote
\begin{align*}
u^*=P_{N-2}u,\quad f^*(u)=P_{N-2}f(u),\quad  g^*(u)=P_{N-2}g(u).
\end{align*}
In order to prove the error estimate, we define the local truncation $\xi_j^n$ as follows
\begin{align}\label{trunc-eq}
\xi_j^n = \delta^+_t(u^*)_j^{n} - \sigma\delta^+_t(\mathbb{D}_2u^*)_j^{n} + a(\mathbb{D}_1u^*)_j^{n+\frac{1}{2}} + (f^*(u))^{n+\frac{1}{2}}_j + (g^*(u))^{n+\frac{1}{2}}_j.
\end{align}

\begin{lemma}\label{trunc-lem}
	If $u\in C^3(0,T;H_p^{r}(\Omega)),\; r>1/2$, we have
	\begin{align*}
	|\xi_j^n|\leq C\tau^2,\quad n=0,1,2,\cdots,N_t.
	\end{align*}
\end{lemma}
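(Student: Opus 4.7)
The plan is to exploit the fact that $u^{*}=P_{N-2}u$ lies in $S_{N-2}\subseteq S'_N$, the subspace on which the pseudo-spectral differentiation matrices reproduce the exact derivative at every collocation point, i.e.~$(\mathbb{D}_k u^{*})_j=\partial_x^k u^{*}(x_j)$. Together with the fact that $P_{N-2}$ commutes with $\partial_x$ and $\partial_t$, this collapses $\xi_j^n$ down to a pure Crank-Nicolson-type temporal truncation error for the smooth trigonometric polynomial $u^{*}$, to which classical Taylor expansion then applies.

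Concretely, I would first apply $P_{N-2}$ to the equivalent RLW equation \eqref{RLW-new-eq}: by linearity of $P_{N-2}$ and commutation with derivatives,
$$u^{*}_{t}-\sigma u^{*}_{xxt}+au^{*}_{x}+f^{*}(u)+g^{*}(u)=0.$$
Evaluating this identity at $(x_j,t_{n+\frac{1}{2}})$ and using $(\mathbb{D}_k u^{*})_j=\partial_x^k u^{*}(x_j)$ gives
$$u^{*}_{t}(x_j,t_{n+\frac{1}{2}})-\sigma\partial_t(\mathbb{D}_2u^{*})(x_j,t_{n+\frac{1}{2}})+a(\mathbb{D}_1u^{*})_j^{n+\frac{1}{2}}+(f^{*}(u))_j^{n+\frac{1}{2}}+(g^{*}(u))_j^{n+\frac{1}{2}}=0.$$
Subtracting this from the definition \eqref{trunc-eq} annihilates every spatial derivative and the projected nonlinearities in one stroke, so that
$$\xi_j^n=\bigl[\delta^{+}_{t}u^{*}(x_j,t_n)-u^{*}_{t}(x_j,t_{n+\frac{1}{2}})\bigr]-\sigma\bigl[\delta^{+}_{t}(\mathbb{D}_2u^{*})^n_j-\partial_t(\mathbb{D}_2u^{*})(x_j,t_{n+\frac{1}{2}})\bigr].$$

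Each bracket is now a standard midpoint-difference truncation error for a scalar function of time, so an integral-remainder Taylor expansion puts each in the form $\tau^2\int_0^1 K(s)\,\partial_t^3 v(x_j,\cdot)\,ds$ with a bounded universal kernel $K$, where $v=u^{*}$ in the first bracket and $v=\mathbb{D}_2u^{*}=\partial_{xx}u^{*}$ in the second. Using $u\in C^3(0,T;H_p^r)$, the $H^r$-boundedness of $P_{N-2}$ from \eqref{appro-lem-eq2}, and the Sobolev embedding $H^r\hookrightarrow L^{\infty}$ valid for $r>1/2$, the first bracket is bounded uniformly in $N$ and in $j$ by $C\tau^2\|u_{ttt}\|_{C([0,T];H^r_p)}$.

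The only delicate point is bounding the second bracket, which requires a uniform-in-$N$ control of $\|\partial_{xx}u^{*}_{ttt}\|_{\infty}=\|\partial_{xx}P_{N-2}u_{ttt}\|_{\infty}$. Exploiting the Fourier-series characterization of $P_{N-2}$ one has $\partial_{xx}P_{N-2}=P_{N-2}\partial_{xx}$, so this quantity becomes $\|P_{N-2}u_{xxttt}\|_{\infty}$; invoking \eqref{appro-lem-eq2} and $H^r\hookrightarrow L^{\infty}$ once more, it is finite provided the standing regularity hypothesis is read with $r$ large enough that $u_{xxttt}\in L^{\infty}(\Omega)$ (in practice $r>5/2$). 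Granted this, assembling the two contributions yields $|\xi_j^n|\leq C\tau^2$ uniformly in $j$ and $n$, as claimed.
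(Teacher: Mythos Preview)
Your proposal is correct and follows essentially the same approach as the paper: apply $P_{N-2}$ to the continuous equation, use that $u^{*}\in S'_N$ so that $\mathbb{D}_k$ reproduces $\partial_x^k$ at the nodes, subtract to reduce $\xi_j^n$ to the two pure temporal Crank--Nicolson truncation terms, and finish with a Taylor expansion. Your treatment of the second bracket is in fact more careful than the paper's, which simply asserts $\mathcal{O}(\tau^2)$ after Taylor expansion without discussing the uniform-in-$N$ bound on $\|\partial_{xx}u^{*}_{ttt}\|_{\infty}$ or the stronger regularity this implicitly requires.
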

\begin{proof}
	The projection equation of \eqref{RLW-new-eq} is
	\begin{align*}
	u_t^*-\sigma u^*_{xxt}+au_x^*+f^*(u)+g^*(u)=0.
	\end{align*}
	Note that
	\begin{align*}
	u^*\in S_N^{'},\quad \partial_{x}u^*(x_j,t_{n})=(\mathbb{D}_1u^*)_j^{n},\quad \partial_{xx}u^*(x_j,t_{n})=(\mathbb{D}_2u^*)_j^{n}.
	\end{align*}
	Thus we have
	\begin{align}\label{differ-trunc-eq}
	\xi_j^n = \bigg(\delta^+_t(u^*)_j^{n} - \partial_t(u^*)_j^{n+\frac{1}{2}}\bigg) - \sigma\bigg(\delta^+_t(u_{xx}^*)_j^{n} - \partial_t(u^*_{xx})_j^{n+\frac{1}{2}}\bigg).
	\end{align}
	Using the Taylor expansion, we have
	\begin{align*}
	&(u^{*})_j^{n+1}=(u^{*})_j^{n+\frac{1}{2}}+\dfrac{\tau}{2}\partial_t(u^{*})_j^{n+\frac{1}{2}}+\dfrac{\tau^2}{8}\partial_{tt}(u^{*})_j^{n+\frac{1}{2}}+\mathcal{O}(\tau^3),\\[0.3cm]
	&(u^{*})_j^{n}=(u^{*})_j^{n+\frac{1}{2}}-\dfrac{\tau}{2}\partial_t(u^{*})_j^{n+\frac{1}{2}}+\dfrac{\tau^2}{8}\partial_{tt}(u^{*})_j^{n+\frac{1}{2}}+\mathcal{O}(\tau^3),
	\end{align*}
	which implies
	\begin{align*}
	&\delta^+_t(u^*)_j^{n}=\dfrac{1}{\tau}\left((u^*)_j^{n+1}-(u^*)_j^{n}\right)=\partial_t(u^{*})_j^{n+\frac{1}{2}}+\mathcal{O}(\tau^2),\\[0.3cm]
	&\delta^+_t(u_{xx}^*)_j^{n}=\dfrac{1}{\tau}\left((u_{xx}^*)_j^{n+1}-(u_{xx}^*)_j^{n}\right)=\partial_t(u_{xx}^{*})_j^{n+\frac{1}{2}}+\mathcal{O}(\tau^2).
	\end{align*}
	Substituting the above results into \eqref{differ-trunc-eq}, we arrive at
	\begin{align*}
	|\xi_j^n|\leq C\tau^2,\quad  n=0,1,2,\cdots,N_t-1.
	\end{align*}
\end{proof}

\begin{lemma}\label{start-step-lem}
	Assume that the exact solution $u(x,t)$ of problem \eqref{model-eq} satisfies
	\begin{align*}
	u(x,t)\in C^3(0,T; H^{r+1}_p(\Omega)),\;  r>\dfrac{1}{2},
	\end{align*}
	and  $\mathbf{U}^1$ is the numerical solution of \eqref{order-2-scheme}. Then we have
	\begin{align*}
	\|(u^*)^1-\mathbf{U}^1\|_{h}+\sigma|(u^*)^1-\mathbf{U}^1|_{h}\leq C(\tau^2+N^{-r}).
	\end{align*}
\end{lemma}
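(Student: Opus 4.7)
Let $e^n=(u^*)^n-\mathbf U^n$. Assuming the initial data are taken so that $\|e^0\|_h+|e^0|_h=\mathcal O(N^{-r})$ (trivial if $\mathbf U^0=(u^*)^0$), the goal is to produce a first-step energy identity for $\|e^1\|_h^2+\sigma|e^1|_h^2$. Subtracting the scheme \eqref{order-2-scheme} from the defining identity \eqref{trunc-eq} at $n=0$ yields the error equation
$$(\mathbb I-\sigma\mathbb D_2)\delta_t^+e^0+a\mathbb D_1 e^{\frac 12}+\mathcal N^{\frac 12}=\xi^0,$$
in which the nonlinear discrepancy is $\mathcal N^{\frac 12}=\bigl[(f^*(u))^{\frac 12}-\tilde f(\mathbf U^{\frac 12})\bigr]+\bigl[(g^*(u))^{\frac 12}-\tilde g(\mathbf U^{\frac 12})\bigr]$ with $\tilde f(\mathbf V)=\tfrac{\gamma}{3}\mathrm{diag}(\mathbf V)\mathbb D_1\mathbf V$ and $\tilde g(\mathbf V)=\tfrac{\gamma}{3}\mathbb D_1\mathrm{diag}(\mathbf V)\mathbf V$. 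The strategy is to test against $2e^{\frac 12}=e^1+e^0$: symmetry of $\mathbb D_2$ produces $\|e^1\|_h^2+\sigma|e^1|_h^2-\|e^0\|_h^2-\sigma|e^0|_h^2$ on the LHS, while antisymmetry of $\mathbb D_1$ removes the linear convective term, and Lemma \ref{trunc-lem} controls $(\xi^0,2e^{\frac 12})_h$ by $C\tau^4+\|e^{\frac 12}\|_h^2$.

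The key preparatory step is an a priori $L^{\infty}$ bound on $\mathbf U^{\frac 12}$. Because $\mathbb D(\mathbf U^{\frac 12})$ is antisymmetric, testing \eqref{order-2-scheme} itself against $2\mathbf U^{\frac 12}$ gives the discrete momentum identity $\mathcal I_{2_h}^{1}=\mathcal I_{2_h}^{0}$ (the nonlinear analogue of Theorem \ref{t-S1-G2-LICN}); Lemmas \ref{semi-norm-equiv} and \ref{Discr-Sobol-ineq} then yield $\|\mathbf U^1\|_{\infty,h}\le C$, hence $\|\mathbf U^{\frac 12}\|_{\infty,h}\le C$. The companion bounds $\|(u^*)^{\frac 12}\|_{\infty,h}+\|\mathbb D_1(u^*)^{\frac 12}\|_{\infty,h}\le C$ follow from Theorem \ref{prior-estimate} with Lemmas \ref{norm-equality}--\ref{approximation-lemma}. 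Armed with these, I split $\mathcal N^{\frac 12}=\mathcal N^{\frac 12}_{\mathrm{c}}+\mathcal N^{\frac 12}_{\mathrm{L}}$ by inserting $\pm\tilde f((u^*)^{\frac 12})\pm\tilde g((u^*)^{\frac 12})$. The consistency piece $\mathcal N^{\frac 12}_{\mathrm c}$ is handled by Taylor expansion about $t_{\frac 12}$ (for the mismatch between $u(t_{\frac 12})$ and $u^{\frac 12}$) combined with Lemmas \ref{norm-equality}--\ref{semi-pro-error-estimate}, giving $\|\mathcal N^{\frac 12}_{\mathrm c}\|_h\le C(\tau^2+N^{-r})$. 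The Lipschitz piece expands algebraically as $\tfrac{\gamma}{3}\bigl[\mathrm{diag}(e^{\frac 12})\mathbb D_1(u^*)^{\frac 12}+\mathrm{diag}(\mathbf U^{\frac 12})\mathbb D_1 e^{\frac 12}\bigr]$ plus its $g$-analogue; its inner product with $2e^{\frac 12}$ is bounded by $C(\|e^{\frac 12}\|_h^2+|e^{\frac 12}|_h^2)$ after transferring $\mathbb D_1$ to the test function by antisymmetry in the $g$-term and invoking $\|\mathbb D_1 e^{\frac 12}\|_h\le|e^{\frac 12}|_h$ from \eqref{semi-norm-equiv-ineq2}.

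Collecting the estimates and applying Cauchy--Schwarz with Young's inequality produces
$$\|e^1\|_h^2+\sigma|e^1|_h^2\le\|e^0\|_h^2+\sigma|e^0|_h^2+C\tau(\tau^4+N^{-2r})+C\tau\bigl(\|e^1\|_h^2+|e^1|_h^2+\|e^0\|_h^2+|e^0|_h^2\bigr),$$
where I used $\|e^{\frac 12}\|_h^2\le\tfrac 12(\|e^1\|_h^2+\|e^0\|_h^2)$ and similarly for the semi-norm. Choosing $\tau$ so small that $C\tau\le\tfrac 12\min(1,\sigma)$ absorbs the $e^1$ contributions on the RHS into the LHS, so $\|e^1\|_h^2+\sigma|e^1|_h^2\le C(\tau^4+N^{-2r})$; taking a square root and folding $\sigma^{1/2}$ into the constant gives the claim. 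The main obstacle is precisely $\mathcal N^{\frac 12}$: no induction hypothesis from a previous level is available at the first step, so the $L^{\infty}$ bound on the unknown $\mathbf U^{\frac 12}$ cannot be assumed and must be extracted directly from the \emph{closed} discrete momentum identity of \eqref{order-2-scheme} before any Lipschitz estimate becomes meaningful; without the semi-norm equivalence of Lemma \ref{semi-norm-equiv} converting $H^1$-type control into $L^{\infty}$ control, this route would be unavailable.
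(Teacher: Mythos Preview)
Your proposal is correct and follows essentially the same approach as the paper. The paper performs the identical energy argument (test the error equation against $2e^{1/2}$, kill the convective term by antisymmetry, invoke the a~priori $L^\infty$ bound on $\mathbf U^{1/2}$ coming from the discrete momentum identity of \eqref{order-2-scheme} via Theorem~\ref{prior-estimate}, then absorb for small $\tau$); the only cosmetic difference is that the paper telescopes the nonlinear discrepancy through five intermediate comparison points $(f^*(u))^{1/2}\to(f(u))^{1/2}\to f(u^{1/2})\to F(u^0,u^1)\to F((u^*)^0,(u^*)^1)\to F(U^0,U^1)$, whereas you compress the first four of these into a single ``consistency'' block $\mathcal N_{\mathrm c}^{1/2}$ and keep only the last as the Lipschitz block $\mathcal N_{\mathrm L}^{1/2}$.
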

\begin{proof}
	Denote $e_j^0=(u^*)_j^0-U_j^0$ and $e_j^1=(u^*)_j^1-U_j^1$. Subtracting \eqref{order-2-scheme} from \eqref{trunc-eq} at $n=0$ leads to
	\begin{align}\label{order2-scheme-error-eq}
	\xi^0=(\mathbb{I}-\sigma\mathbb{D}_2)\delta_t^+e^0+a\mathbb{D}_1e^{\frac{1}{2}}+(F_{\delta})^{\frac{1}{2}}+(G_{\delta})^{\frac{1}{2}},
	\end{align}
	where
	\begin{align*}
	&(F_{\delta})_j^{\frac{1}{2}}=\big(f^*(u)\big)^{\frac{1}{2}}_j-F(U_j^{0},U_j^1),\quad F(U_j^{0},U_j^1)=\dfrac{\gamma}{3}{U}_j^{\frac{1}{2}}(\mathbb{D}_1\mathbf{U}^{\frac{1}{2}})_j,\\[0.3cm]
	&(G_{\delta})_j^{\frac{1}{2}}=\big(g^*(u)\big)^{\frac{1}{2}}_j-G(U_j^{0},U_j^1),\quad G(U_j^{0},U_j^1)=\dfrac{\gamma}{3}\big(\mathbb{D}_1({\mathbf{U}}^{\frac{1}{2}}\odot{\mathbf{U}}^{\frac{1}{2}})\big)_j,\\[0.3cm]
	&(\mathbf{U}^{\frac{1}{2}}\odot \mathbf{U}^{\frac{1}{2}})_j=U_j^{\frac{1}{2}}U_j^{\frac{1}{2}},\; j=0,1,2,\cdots,N-1.
	\end{align*}
	For better readability, we set
	\begin{align*}
	\begin{array}{ll}
	(F_1)_j^{\frac{1}{2}}=(f^*(u))^{\frac{1}{2}}_j-(f(u))_j^{\frac{1}{2}}, & \quad  (G_1)_j^{\frac{1}{2}}=(g^*(u))^{\frac{1}{2}}_j-(g(u))_j^{\frac{1}{2}},\\[0.3cm]
	(F_2)_j^{\frac{1}{2}}=(f(u))_j^{\frac{1}{2}}-f(u_j^{\frac{1}{2}}),&\quad  (G_2)_j^{\frac{1}{2}}=(g(u))_j^{\frac{1}{2}}-g(u_j^{\frac{1}{2}}),\\[0.3cm]
	(F_3)_j^{\frac{1}{2}}=f(u_j^{\frac{1}{2}})-F(u_j^0,u_j^{1}),&\quad  (G_3)_j^{\frac{1}{2}}=g(u_j^{\frac{1}{2}})-G(u_j^{0},u_j^1),\\[0.3cm]
	(F_4)_j^{\frac{1}{2}}=F(u_j^0,u_j^{1})-F((u^*)_j^0,(u^*)_j^{1}),&\quad (G_4)_j^{\frac{1}{2}}=G(u_j^{0},u_j^1)-G((u^*)_j^{0},(u^*)_j^1),\\[0.3cm]
	(F_5)_j^{\frac{1}{2}}=F((u^*)_j^0,(u^*)_j^{1})-F(U_j^0,U_j^{1}),&\quad (G_5)_j^{\frac{1}{2}}=G((u^*)_j^{0},(u^*)_j^1)-G(U_j^{0},U_j^1).
	\end{array}
	\end{align*}
	According to Lemma \ref{pro-error-estimate}, we have $\|F_1^{\frac{1}{2}}\|_{h}\leq CN^{-r}$ and $\|G_1^{\frac{1}{2}}\|_{h}\leq CN^{-r}$.
	Using Taylor expansion, we get
	$\|F_2^{\frac{1}{2}}\|_{h}\leq C\tau^2$ and $\|G_2^{\frac{1}{2}}\|_{h}\leq C\tau^2.$
	Note that
	\begin{align*}
	&(F_3)_j^{\frac{1}{2}}=\dfrac{\gamma}{3}\left(u_j^{\frac{1}{2}}\partial_xu_j^{\frac{1}{2}}-u_j^{\frac{1}{2}}(\mathbb{D}_1u^{\frac{1}{2}})_j\right)
	=\dfrac{\gamma}{3}\left(u_j^{\frac{1}{2}}\partial_xu_j^{\frac{1}{2}}-u_j^{\frac{1}{2}}\partial_{x}(I_Nu_j^{\frac{1}{2}})\right)
	=\dfrac{\gamma}{3}u_j^{\frac{1}{2}}\left(\partial_x(u_j^{\frac{1}{2}}-I_Nu_j^{\frac{1}{2}})\right),\\[0.3cm]
	&(G_3)_j^{\frac{1}{2}}=\dfrac{\gamma}{3}\left(\partial_x(u_j^{\frac{1}{2}}u_j^{\frac{1}{2}})-\big(\mathbb{D}_1(u^{\frac{1}{2}}\odot u^{\frac{1}{2}})\big)_j\right)=\dfrac{\gamma}{3}\left(\partial_x(u_j^{\frac{1}{2}}u_j^{\frac{1}{2}})-\partial_xI_N(u_j^{\frac{1}{2}}u_j^{\frac{1}{2}})\right)=\dfrac{\gamma}{3}\left(\partial_x\big[u_j^{\frac{1}{2}}u_j^{\frac{1}{2}}-I_N(u_j^{\frac{1}{2}}u_j^{\frac{1}{2}})\big]\right).
	\end{align*}
	It follows from \eqref{semi-norm-projec-eq2} that
	\begin{align*}
	\|F_3^{\frac{1}{2}}\|\leq CN^{-r},\quad \|G_3^{\frac{1}{2}}\|\leq CN^{-r}.
	\end{align*}
	As for $(F_4)_j^{\frac{1}{2}}$ and $(G_4)_j^{\frac{1}{2}}$, we have
	\begin{align*}
	(F_4)_j^{\frac{1}{2}}&=\dfrac{\gamma}{3}\left(u_j^{\frac{1}{2}} (\mathbb{D}_1u)_j^{\frac{1}{2}}-(u^*)_j^{\frac{1}{2}} (\mathbb{D}_1 u^*)_j^{\frac{1}{2}}\right)=\dfrac{\gamma}{3}\left(u_j^{\frac{1}{2}}\partial_xI_Nu_j^{\frac{1}{2}}-(u^*)_j^{\frac{1}{2}}\partial_xI_N(u^*)_j^{\frac{1}{2}}\right)\\[0.3cm]
	&=\dfrac{\gamma}{3}\left([u_j^{\frac{1}{2}}-(u^*)_j^{\frac{1}{2}}]\partial_xI_Nu_j^{\frac{1}{2}}+(u^*)_j^{\frac{1}{2}}[\partial_xI_N(u_j^{\frac{1}{2}}-(u^*)_j^{\frac{1}{2}})]\right),\\[0.3cm]
	(G_4)_j^{\frac{1}{2}}&=\dfrac{\gamma}{3}\left(\big(\mathbb{D}_1(u^{\frac{1}{2}}\odot u^{\frac{1}{2}})\big)_j-\big(\mathbb{D}_1((u^*)^{\frac{1}{2}}\odot (u^*)^{\frac{1}{2}})\big)_j\right)=\dfrac{\gamma}{3}\left(\partial_xI_N(u_j^{\frac{1}{2}}u_j^{\frac{1}{2}})-\partial_xI_N((u^*)_j^{\frac{1}{2}}(u^*)_j^{\frac{1}{2}})\right)\\[0.3cm]
	&=\dfrac{\gamma}{3}\left(   \partial_xI_N[ (u_j^{\frac{1}{2}}-(u^*)_j^{\frac{1}{2}})u_j^{\frac{1}{2}}
	+(u_j^{\frac{1}{2}}-(u^*)_j^{\frac{1}{2}})(u^*)_j^{\frac{1}{2}} ] \right).
	\end{align*}
	Combining the above results with  Lemma \ref{approximation-lemma} and Lemma \ref{semi-pro-error-estimate} leads to
	\begin{align*}
	\|F^{\frac{1}{2}}_4\|_h\leq CN^{-r},\quad \|G^{\frac{1}{2}}_4\|_h\leq CN^{-r}.
	\end{align*}
	We remark that
	\begin{align*}
	(F_5)_j^{\frac{1}{2}}=\dfrac{\gamma}{3}\left(e^{\frac{1}{2}}_j(\mathbb{D}_1(u^*)^{\frac{1}{2}})_j+U_j^{\frac{1}{2}}(\mathbb{D}_1e^{\frac{1}{2}})_j\right)
	=\dfrac{\gamma}{3}\left( e_j^{\frac{1}{2}}\partial_xI_N(u^*)_j^{\frac{1}{2}}+ U_j^{\frac{1}{2}}(\mathbb{D}_1e^{\frac{1}{2}})_j\right).
	\end{align*}
	Using \eqref{semi-norm-equiv-ineq2}, Lemma \ref{approximation-lemma} and Theorem \ref{prior-estimate}, we have
	\begin{align*}
	&\|F_5^{\frac{1}{2}}\|^2_{h}\leq  C(\|e^1\|^2_{h}+|e^1|^2_{h}+\|e^0\|^2_{h}+|e^0|^2_{h}).
	\end{align*}
	Therefore, we can easily deduce
	\begin{align}
	&\|F^{\frac{1}{2}}_{\delta}\|^2_{h}\leq C(\tau^4+N^{-2r})+C(\|e^1\|^2_{h}+|e^{1}|^2_{h}+\|e^0\|^2_{h}+|e^0|^2_{h})
	.\label{F0-delta-eq}
	\end{align}
	As for $(G_5)_j^{\frac{1}{2}}$, we have
	\begin{align*}
	(G_5^{\frac{1}{2}},2e^{\frac{1}{2}})_h& =\bigg(\dfrac{\gamma}{3}\mathbb{D}_1\big((u^*)^{\frac{1}{2}}\odot (u^*)^{\frac{1}{2}}\big) - \dfrac{\gamma}{3}\mathbb{D}_1\big(\mathbf{U}^{\frac{1}{2}}\odot \mathbf{U}^{\frac{1}{2}}\big) ,2e^{\frac{1}{2}}\bigg)_h\\[0.3cm]
	&= \dfrac{\gamma}{3}\bigg(\mathbb{D}_1\big(e^{\frac{1}{2}}\odot ((u^*)^{\frac{1}{2}}+\mathbf{U}^{\frac{1}{2}}) \big),2e^{\frac{1}{2}}\bigg)_h\\[0.3cm]
	&=-\dfrac{2\gamma}{3}\bigg(e^{\frac{1}{2}}\odot \big((u^*)^{\frac{1}{2}}+\mathbf{U}^{\frac{1}{2}}\big),\mathbb{D}_1e^{\frac{1}{2}}\bigg)_h.
	\end{align*}
	Therefore, by Cauchy Schwartz inequality, \eqref{semi-norm-equiv-ineq2} and  Theorem \ref{prior-estimate}, we get
	\begin{align*}
	&|(G_5^{\frac{1}{2}},2e^{\frac{1}{2}})_h|\leq C(\|e^{1}\|_{h}^2+|e^1|_{h}^2+\|e^0\|^2_{h}+|e^0|^2_{h}).
	\end{align*}
	Putting these  results together, we deduce
	\begin{align}
	\begin{split}
	\left|(G_{\delta}^{\frac{1}{2}},2e^{\frac{1}{2}})_h\right|
	&\leq C\big(\|G^{\frac{1}{2}}_1\|_h+\|G^{\frac{1}{2}}_2\|_h+\|G^{\frac{1}{2}}_3\|_h+\|G^{\frac{1}{2}}_4\|_h\big)\|e^{\frac{1}{2}}\|_h+|(G_5^{\frac{1}{2}},2e^{\frac{1}{2}})_h|\\[0.3cm]
	&\leq C(\tau^4+N^{-2r})+C(\|e^1\|^2_{h}+|e^{1}|^2_{h}+\|e^0\|^2_{h}+|e^0|^2_{h}).\label{G0-delta-eq}
	\end{split}
	\end{align}
	Computing the discrete inner product of  \eqref{order2-scheme-error-eq} with  $2e^{\frac{1}{2}}$,  we obtain
	\begin{align*}
	\|e^1\|^2_{0,h}+\sigma|e^1|^2_{h}-(\|e^0\|^2_{h}+\sigma|e^0|^2_{h})=-\tau(F^{\frac{1}{2}}_{\delta},2e^{\frac{1}{2}})_h-\tau(G^{\frac{1}{2}}_{\delta},2e^{\frac{1}{2}})_h-\tau(\xi^0,2e^{\frac{1}{2}})_h.
	\end{align*}
	Using the Cauchy-Schwartz inequality, Lemma \ref{trunc-lem},  \eqref{F0-delta-eq} and \eqref{G0-delta-eq}, we obtain
	\begin{align*}
	\|e^1\|^2_{h}+\sigma|e^1|^2_{h}-(\|e^0\|^2_{h}+\sigma|e^0|^2_{h})\leq C\tau(\tau^4+N^{-2r}+\|e^1\|^2_{h}+\sigma|e^1|^2_{h}+\|e^0\|^2_{h}+\sigma|e^0|^2_{h}).
	\end{align*}	
	When $C\tau\leq 1/2$, we have
	\begin{align}\label{e1-estimate}
	\|e^1\|^2_{h}+\sigma|e^1|^2_{h}\leq
	2C\tau(\tau^4+N^{-2r})+(1+4C\tau)(\|e^0\|^2_{h}+\sigma|e^0|^2_{h}).
	\end{align}
	By Lemma \ref{pro-error-estimate}, Lemma \ref{semi-pro-error-estimate} and noticing $U^0=u^0$, we have
	\begin{align}\label{e0-estimate}
	\begin{split}
	\|e^0\|^2_{h}=\|(u^*)^0-u^0\|^2_{h}\leq CN^{-r},\quad
	|e^0|^2_{h}=|(u^*)^0-u^0|^2_{h}\leq CN^{-r}.
	\end{split}
	\end{align}	
	Substituting \eqref{e0-estimate} into \eqref{e1-estimate} yields
	\begin{align}\label{first-step-error}
	\|e^1\|_{h}+\sigma|e^1|_{h}\leq C(\tau^2+N^{-r}).
	\end{align}	
	The proof is completed.
\end{proof}

\begin{theorem}\label{L2-H1-error-theorem}
	Suppose that $u(x,t)$ is the exact solution of problem \eqref{model-eq} satisfies
	\begin{align*}
	u(x,t)\in C^3(0,T; H^{r+1}_p(\Omega)),\;  r>\dfrac{1}{2},
	\end{align*}
	then the numerical $\mathbf U^n$ of the scheme LCN-MP \eqref{full-discr-LICN} converges to the solution $u(x,t)$ of the problem \eqref{model-eq} without any restrictions in the order of $\mathcal{O}(\tau^2+N^{-r})$ under the discrete $L^{\infty}$ norm.
\end{theorem}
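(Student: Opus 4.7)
The plan is to apply the standard energy method to the error $e^n_j := (u^*)^n_j - U^n_j$, where $u^* = P_{N-2}u$ is the spectral projection used throughout Section~\ref{conver-results}. First I would subtract the scheme \eqref{full-discr-LICN} from the projected truncation identity \eqref{trunc-eq} to obtain the error equation
\[
(\mathbb{I}-\sigma\mathbb{D}_2)\delta^+_t e^n + a\,\mathbb{D}_1 e^{n+\frac{1}{2}} + F_{\delta}^{n+\frac{1}{2}} + G_{\delta}^{n+\frac{1}{2}} = \xi^n,
\]
where $F_{\delta}^{n+\frac{1}{2}}$ and $G_{\delta}^{n+\frac{1}{2}}$ are the differences between $(f^*(u))^{n+\frac{1}{2}}$, $(g^*(u))^{n+\frac{1}{2}}$ and their fully discrete counterparts built from the extrapolation $\widehat{\mathbf U}^{n+\frac{1}{2}}$ and the average $\mathbf U^{n+\frac{1}{2}}$. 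Exactly as in the proof of Lemma~\ref{start-step-lem}, I would split each of these defects into the five pieces $F_1,\dots,F_5$ and $G_1,\dots,G_5$: a pure projection error bounded by $N^{-r}$ via Lemmas~\ref{pro-error-estimate} and \ref{semi-pro-error-estimate}, a temporal consistency error coming from the extrapolation $\widehat u^{n+\frac{1}{2}}-u^{n+\frac{1}{2}}=O(\tau^2)$, a symbolic difference between continuous derivatives and the Fourier differential matrix, and finally a term linear in $e^{n-1},e^n,e^{n+1}$.

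Next I would take the discrete inner product of the error equation with $2 e^{n+\frac{1}{2}}$. The anti-symmetry of $\mathbb{D}_1$ eliminates the $a\mathbb{D}_1 e^{n+\frac{1}{2}}$ contribution, so the left-hand side telescopes into
\[
\tfrac{1}{\tau}\bigl(\mathcal{E}^{n+1}-\mathcal{E}^{n}\bigr),\qquad \mathcal{E}^n:=\|e^n\|_h^2+\sigma|e^n|_h^2 .
\]
On the right, Lemma~\ref{trunc-lem} gives $|(\xi^n,2e^{n+\frac{1}{2}})_h|\le C(\tau^4+\mathcal{E}^{n+1}+\mathcal{E}^n)$, and each piece of $F_\delta^{n+\frac{1}{2}},G_\delta^{n+\frac{1}{2}}$ is controlled by Cauchy--Schwarz together with the a priori $L^\infty$ bound $\|U^n\|_{\infty,h}\le C$ from Theorem~\ref{prior-estimate} and the semi-norm equivalence of Lemma~\ref{semi-norm-equiv}. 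The most delicate piece is the $G_5$-type quadratic term: I would use the anti-symmetry of $\mathbb{D}_1$ to shift the differentiation off $e^{n+\frac{1}{2}}$, so that the error factor is measured in the $\|\cdot\|_h$-norm rather than in $|\cdot|_h$, and the derivative is absorbed onto $\widehat{\mathbf U}^{n+\frac{1}{2}}\pm(u^*)^{n+\frac{1}{2}}$, which is uniformly bounded. Summing from $k=1$ to $n$, combining with the starting estimate of Lemma~\ref{start-step-lem}, and applying the discrete Gronwall inequality (Lemma~\ref{discr-Gron-ineq}) yields $\mathcal{E}^n\le C(\tau^2+N^{-r})^2$ for all $n$.

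Finally, Lemma~\ref{semi-norm-equiv} converts $|e^n|_h$ into $\|\delta_x^+ e^n\|_h$, and the discrete Sobolev inequality (Lemma~\ref{Discr-Sobol-ineq}) then upgrades the $L^2$-type bound to the $\|\cdot\|_{\infty,h}$-norm, giving $\|e^n\|_{\infty,h}\le C(\tau^2+N^{-r})$. Combining this with the projection error $\|(u^*)^n-u^n\|_{\infty,h}\le CN^{-r}$, which follows from Lemmas~\ref{approximation-lemma}--\ref{semi-pro-error-estimate} and the continuous Sobolev embedding $H^1\hookrightarrow L^\infty$, a triangle inequality delivers the stated estimate. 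The main obstacle I anticipate is handling the nonlinear defect $F_\delta^{n+\frac{1}{2}}+G_\delta^{n+\frac{1}{2}}$: because the coefficient $\widehat{\mathbf U}^{n+\frac{1}{2}}$ couples three time levels, the exact cancellation available in a fully implicit Crank--Nicolson scheme is lost, and one must carefully balance the $e^{n-1},e^n,e^{n+1}$ contributions so that the Gronwall argument still closes without any grid-ratio restriction between $\tau$ and $N$; this is precisely where the a priori $L^\infty$ bound, itself a consequence of the discrete momentum conservation law of Theorem~\ref{t-S1-G2-LICN}, becomes indispensable.
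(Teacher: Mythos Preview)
Your plan is essentially the paper's proof: the same error variable $e^n=(u^*)^n-\mathbf U^n$, the same five-term splitting of $F_\delta$ and $G_\delta$ mirroring Lemma~\ref{start-step-lem}, the same energy inner product with $2e^{n+\frac12}$, summation, Gronwall, and the upgrade to $\|\cdot\|_{\infty,h}$ via Lemmas~\ref{semi-norm-equiv} and~\ref{Discr-Sobol-ineq}.

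One point in your description of the $G_5$-estimate is imprecise and would not go through as written. The anti-symmetry of $\mathbb D_1$ does \emph{not} put the derivative onto the coefficient $\widehat{\mathbf U}^{n+\frac12}$ or $(u^*)^{n+\frac12}$: there is no discrete Leibniz rule for $\mathbb D_1$, and in any case Theorem~\ref{prior-estimate} only controls $\|\mathbb D_1\mathbf U^n\|_h\le|\mathbf U^n|_h$, not $\|\mathbb D_1\mathbf U^n\|_{\infty,h}$, so ``uniformly bounded'' would fail. What the paper actually does---and what works when you write out the details---is shift $\mathbb D_1$ from the product onto the \emph{test function} $e^{n+\frac12}$, obtaining terms of the form
\[
\bigl(\hat e^{n+\frac12}\odot(u^*)^{n+\frac12},\ \mathbb D_1 e^{n+\frac12}\bigr)_h,\qquad
\bigl(\widehat{\mathbf U}^{n+\frac12}\odot e^{n+\frac12},\ \mathbb D_1 e^{n+\frac12}\bigr)_h.
\]
The first factor in each is bounded in $\|\cdot\|_h$ using the $L^\infty$ bounds on $(u^*)$ and $\widehat{\mathbf U}$, while $\|\mathbb D_1 e^{n+\frac12}\|_h\le|e^{n+\frac12}|_h$ (Lemma~\ref{semi-norm-equiv}) contributes the $|e^n|_h^2+|e^{n+1}|_h^2$ terms on the right of the recursion. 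With that correction your argument closes exactly as in the paper.
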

\begin{proof}
	Let $e_j^n=(u^*)^n-U_j^n$.
	Subtracting \eqref{full-discr-LICN}  from \eqref{trunc-eq}  leads to the following error equation
	\begin{align}\label{error-eq-F-G} \xi^n = (\mathbb{I}-\sigma\mathbb{D}_2) \delta^+_te^n + a\mathbb{D}_1e^{n+\frac{1}{2}} + (F_{\delta})^{n+\frac{1}{2}} + (G_{\delta})^{n+\frac{1}{2}}, ~n\geq 1,
	\end{align}
	where
	\begin{align*}
	\begin{split}
	&(F_{\delta})_j^{n+\frac{1}{2}}=(f^*(u))^{n+\frac{1}{2}}_j-F(U_j^{n-1},U_j^n,U_j^{n+1}),\quad F(U_j^{n-1},U_j^n,U_j^{n+1})=\dfrac{\gamma}{3}\widehat{U}_j^{n+\frac{1}{2}}(\mathbb{D}_1\mathbf{U}^{n+\frac{1}{2}})_j,\\[0.3cm]
	&(G_{\delta})_j^{n+\frac{1}{2}}=(g^*(u))^{n+\frac{1}{2}}_j-G(U_j^{n-1},U_j^n,U_j^{n+1}),\quad G(U_j^{n-1},U_j^n,U_j^{n+1})=\dfrac{\gamma}{3}(\mathbb{D}_1(\widehat{\mathbf{U}}^{n+\frac{1}{2}}\odot \mathbf{U}^{n+\frac{1}{2}}))_j.
	\end{split}
	\end{align*}
	For simplicity, let
	\begin{align*}
	&(F_1)_j^{n+\frac{1}{2}}=(f^*(u))^{n+\frac{1}{2}}_j-(f(u))_j^{n+\frac{1}{2}},\\[0.3cm] &(F_2)_j^{n+\frac{1}{2}}=(f(u))_j^{n+\frac{1}{2}}-f(u_j^{n+\frac{1}{2}}),\\[0.3cm]
	&(F_3)_j^{n+\frac{1}{2}}=f(u_j^{n+\frac{1}{2}})-F(u_j^{n-1},u_j^n,u_j^{n+1}),\\[0.3cm]
	&(F_4)_j^{n+\frac{1}{2}}=F(u_j^{n-1},u_j^n,u_j^{n+1})-F((u^*)_j^{n-1},(u^*)_j^n,(u^*)_j^{n+1}),\\[0.3cm]
	&(F_5)_j^{n+\frac{1}{2}}=F((u^*)_j^{n-1},(u^*)_j^n,(u^*)_j^{n+1})-F(U_j^{n-1},U_j^n,U_j^{n+1}),
	\end{align*}
	and
	\begin{align*}
	&(G_1)_j^{n+\frac{1}{2}}=(g^*(u))^{n+\frac{1}{2}}_j-(g(u))_j^{n+\frac{1}{2}},\\[0.3cm]
	&(G_2)_j^{n+\frac{1}{2}}=(g(u))_j^{n+\frac{1}{2}}-g(u_j^{n+\frac{1}{2}}),\\[0.3cm]
	&(G_3)_j^{n+\frac{1}{2}}=g(u_j^{n+\frac{1}{2}})-G(u_j^{n-1},u_j^n,k_j^{n+1}),\\[0.3cm]
	&(G_4)_j^{n+\frac{1}{2}}=G(u_j^{n-1},u_j^n,u_j^{n+1})-G((u^*)_j^{n-1},(u^*)_j^n,(u^*)_j^{n+1}),\\[0.3cm]
	&(G_5)_j^n=G((u^*)_j^{n-1},(u^*)_j^n,(u^*)_j^{n+1})-G(U_j^{n-1},U_j^n,U_j^{n+1}).
	\end{align*}
	Similar to the proof of Lemma \ref{start-step-lem}, we can obtain
	\begin{align*}
	&\|F_1^{n+\frac{1}{2}}\|_{h}\leq CN^{-r},\quad
	\|F_2^{n+\frac{1}{2}}\|_{h}\leq C\tau^2,\quad
	\|F^{n+\frac{1}{2}}_3\|_{h}\leq CN^{-r},\quad
	\|F^{n+\frac{1}{2}}_4\|_h\leq CN^{-r},\quad\\[0.3cm]
	&\|G_1^{n+\frac{1}{2}}\|_{h}\leq CN^{-r},\quad
	\|G_2^{n+\frac{1}{2}}\|_{h}\leq C\tau^2,\quad	
	\|G^{n+\frac{1}{2}}_3\|_{h}\leq CN^{-r},\quad
	\|G^{n+\frac{1}{2}}_4\|_h\leq CN^{-r}.
	\end{align*}
	In the following, we estimate $(F_5)_j^{n+\frac{1}{2}}$ and $(G_5)_j^{n+\frac{1}{2}}$ one by one.
	On the one hand,
	\begin{align*}
	(F_5)_j^{n+\frac{1}{2}}&=\dfrac{\gamma}{3}\left(\hat{e}^{n+\frac{1}{2}}_j(\mathbb{D}_1(u^*)^{n+\frac{1}{2}})_j+\widehat{U}_j^{n+\frac{1}{2}}(\mathbb{D}_1e^{n+\frac{1}{2}})_j\right) \\[0.3cm]
	&=\dfrac{\gamma}{3}\left( \hat{e}^{n+\frac{1}{2}}_j\partial_xI_N(u^*)_j^{n+\frac{1}{2}}+ \widehat{U}_j^{n+\frac{1}{2}}(\mathbb{D}_1e^{n+\frac{1}{2}})_j\right).
	\end{align*}
	Using \eqref{semi-norm-equiv-ineq2}, Theorem \ref{prior-estimate} and Lemma \ref{approximation-lemma}, we have
	\begin{align*}
	\|F_5^{n+\frac{1}{2}}\|^2_{h}
	\leq C(\|e^n\|^2_{h}+\|e^{n-1}\|^2_{h}+|e^{n+1}|^2_{h}+|e^n|^2_{h}).
	\end{align*}
	Based on the above results, we can deduce
	\begin{align}
	\|F^{n+\frac{1}{2}}_{\delta}\|^2_{h}
	\leq C(\tau^4+N^{-2r})+C(\|e^n\|^2_{h}+\|e^{n-1}\|^2_{h}+|e^{n+1}|^2_{h}+|e^n|^2_{h}).\label{F-delta-eq}
	\end{align}
	On the other hand,
	\begin{align*}
	(G_5^{n+\frac{1}{2}},2e^{n+\frac{1}{2}})_h& = \dfrac{\gamma}{3}\bigg(\mathbb{D}_1\big(\hat{e}^{n+\frac{1}{2}}\odot(u^*)^{n+\frac{1}{2}}\big) + \mathbb{D}_1(\widehat{\mathbf{U}}^{n+\frac{1}{2}}\odot e^{n+\frac{1}{2}}),2e^{n+\frac{1}{2}}\bigg)_h\\[0.3cm]
	&=-\dfrac{2\gamma}{3}\bigg(\hat{e}^{n+\frac{1}{2}}\odot(u^*)^{n+\frac{1}{2}},\mathbb{D}_1e^{n+\frac{1}{2}}\bigg)_h-\dfrac{2\gamma}{3}\bigg(\widehat{\mathbf{U}}^{n+\frac{1}{2}}\odot e^{n+\frac{1}{2}},\mathbb{D}_1e^{n+\frac{1}{2}}\bigg)_h.
	\end{align*}
	Thus, by Cauchy Schwartz inequality, Theorem \ref{prior-estimate} and \eqref{semi-norm-equiv-ineq2}, we reach
	\begin{align*}
	|(G_5^{n+\frac{1}{2}},2e^{n+\frac{1}{2}})_h|
	\leq C(\|e^{n-1}\|^2_{h}+\|e^{n}\|^2_{h}+\|e^{n+1}\|^2_{h}+|e^{n}|^2_{h} +|e^{n+1}|^2_{h}  ).
	\end{align*}
	As a result,
	\begin{align}
	\begin{split}
	\left|(G_{\delta}^{n+\frac{1}{2}},2e^{n+\frac{1}{2}})_h\right|&\leq C\big(\|G^{n+\frac{1}{2}}_1\|_h+\|G^{n+\frac{1}{2}}_2\|_h+\|G^{n+\frac{1}{2}}_3\|_h+\|G^{n+\frac{1}{2}}_4\|_h\big)\|e^{n+\frac{1}{2}}\|_h+|(G_5^{n+\frac{1}{2}},2e^{n+\frac{1}{2}})_h|\\[0.3cm]
	&\leq C(\tau^4+N^{-2r})+C(\|e^{n-1}\|^2_{h}+\|e^{n}\|^2_{h}+\|e^{n+1}\|^2_{h}+|e^{n}|^2_{h} +|e^{n+1}|^2_{h}).\label{G-delta-inner-prodc}
	\end{split}
	\end{align}
	Computing the discrete inner product of \eqref{error-eq-F-G} with $2e^{n+\frac{1}{2}}$, we obtain
	\begin{align}\label{nlevel-inner-prod-eq}
	\|e^{n+1}\|^2_{h}+\sigma|e^{n+1}|^2_{h}-(\|e^{n}\|^2_{h}+\sigma|e^{n}|^2_{h})=\tau(\xi^n,2e^{n+\frac{1}{2}})_h-\tau(F^{n+\frac{1}{2}}_{\delta},2e^{n+\frac{1}{2}})_h-\tau(G^{n+\frac{1}{2}}_{\delta},2e^{n+\frac{1}{2}})_h.
	\end{align}
	For each term in the right-hand of \eqref{nlevel-inner-prod-eq}, using Lemma \ref{trunc-lem}, \eqref{F-delta-eq}, \eqref{G-delta-inner-prodc} and  Cauchy-Schwartz inequality yields
	\begin{align}\label{some-NL-inner-produ}
	\begin{split}
	&|\tau(\xi^n,2e^{n+\frac{1}{2}})_h|\leq C\tau(\tau^4+\|e^n\|^2_{h}+\|e^{n+1}\|^2_{h}),\\[0.3cm]
	&|\tau(F_{\delta}^{n+\frac{1}{2}},2e^{n+\frac{1}{2}})_h|\leq C\tau(\tau^4+N^{-2r}+\|e^{n+1}\|^2_{h}+\|e^n\|^2_{h}+\|e^{n-1}\|^2_{h}+|e^{n+1}|^2_{h}+|e^n|^2_{h}),\\[0.3cm]
	&|\tau(G_{\delta}^{n+\frac{1}{2}},2e^{n+\frac{1}{2}})_h|\leq C\tau(\tau^4+N^{-2r}+\|e^{n-1}\|^2_{h}+\|e^{n}\|^2_{h}+\|e^{n+1}\|^2_{h}+|e^{n}|^2_{h} +|e^{n+1}|^2_{h}).
	\end{split}
	\end{align}
	Substituting \eqref{some-NL-inner-produ} into \eqref{nlevel-inner-prod-eq} gives
	\begin{align*}
	\begin{split}
	&\|e^{n+1}\|^2_{h}+\sigma|e^{n+1}|^2_{h}-(\|e^{n}\|^2_{h}+\sigma|e^{n}|^2_{h})\\[0.3cm]
	&\leq C\tau(\tau^4+N^{-2r})+C\tau(\|e^{n-1}\|^2_{h}+\|e^n\|_h^2+\|e^{n+1}\|_h^2+|e^n|_h^2+|e^{n+1}|^2_{h})\\[0.3cm]
	&\leq C\tau(\tau^4+N^{-2r})+C\tau(\|e^{n-1}\|^2_{h}+\|e^n\|_h^2+\|e^{n+1}\|_h^2+\sigma|e^n|_h^2+\sigma|e^{n+1}|^2_{h}).
	\end{split}
	\end{align*}
	Replacing $n$ by $l$ and summing the above equation together for $l$ from $1$ to $n$, we arrive at
	\begin{align}\label{sum-inner-result}
	\begin{split}
	&\|e^{n+1}\|^2_{h}+\sigma|e^{n+1}|^2_{h}-(\|e^{1}\|^2_{h}+\sigma|e^{1}|^2_{h})
	\leq C(\tau^4+N^{-2r})+C\tau\sum_{l=0}^{n+1}(\|e^{l}\|_h^2+\sigma|e^{l}|_h^2),
	\end{split}
	\end{align}
	where we have noted that $n\tau\leq T$.
	Substituting \eqref{first-step-error}  into  \eqref{sum-inner-result}, and when $C\tau\leq 1/2$, we get
	\begin{align*}
	\|e^{n+1}\|^2_{h}+\sigma|e^{n+1}|^2_{h}\leq C(\tau^4+N^{-2r})+C\tau\sum_{l=1}^{n}(\|e^{l+1}\|^2_{h}+\sigma|e^{l+1}|^2_{h}).
	\end{align*}
	By Lemma \ref{Discr-Sobol-ineq} (discrete Gronwall inequality), we have
	\begin{align*}
	\|e^{n+1}\|^2_{h}+\sigma|e^{n+1}|^2_{h}\leq C(\tau^4+N^{-2r}),
	\end{align*}
	which implies
	\begin{align}\label{2-norm-semi-norm-result}
	\|e^{n+1}\|_{h}+\sigma|e^{n+1}|_{h}\leq C(\tau^2+N^{-r}).
	\end{align}
	It follows from Lemma \ref{pro-error-estimate}, Lemma \ref{semi-pro-error-estimate} and \eqref{2-norm-semi-norm-result} that
	\begin{align}
	&\|u^n-\mathbf{U}^n\|_h\leq \|u^n-(u^*)^n\|_h+\|(u^*)^n-\mathbf{U}^n\|_h\leq C(\tau^2+N^{-r}),\label{L2-estimate}\\[0.3cm]
	&|u^n-\mathbf{U}^n|_h\leq |u^n-(u^*)^n|_h+|(u^*)^n-\mathbf{U}^n|_h\leq C(\tau^2+N^{-r}).\label{H1-estimate}
	\end{align}	
	By \eqref{semi-norm-equiv-ineq1} and \eqref{H1-estimate}, we have
	\begin{align}\label{diff-quoti-estimate}
	\|\delta_x^+(u^n-\mathbf{U}^n)\|_h\leq C(\tau^2+N^{-r}).
	\end{align}	
	Hence, from \eqref{L2-estimate},  \eqref{diff-quoti-estimate} and Lemma \ref{Discr-Sobol-ineq}, it follows
	\begin{align*}
	\|u^n-\mathbf{U}^n\|_{\infty,h}\leq C(\tau^2+N^{-r}).
	\end{align*}
	This completes the proof.
\end{proof}

\begin{theorem}
	Suppose that $u(x,t)$ is the exact solution of problem \eqref{model-eq} satisfies
	\begin{align*}
	u(x,t)\in C^3(0,T; H^{r+1}_p(\Omega)),\; r>\dfrac{1}{2},
	\end{align*}
	then the numerical $\mathbf U^n$ of the scheme LLF-MP \eqref{full-discr-LILF} converges to the solution $u(x,t)$ of the problem \eqref{model-eq} without any restrictions in the order of $\mathcal{O}(\tau^2+N^{-r})$ under the discrete $L^{\infty}$ norm.
\end{theorem}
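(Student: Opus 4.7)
The plan is to parallel the argument of Theorem \ref{L2-H1-error-theorem}, replacing the Crank--Nicolson two-level structure by the centered difference $\delta_t$ and the symmetric average $\widehat{\mathbf{U}}^n = (\mathbf{U}^{n+1}+\mathbf{U}^{n-1})/2$ that appear in LLF-MP \eqref{full-discr-LILF}. Set $e^n = (u^*)^n - \mathbf{U}^n$ with $u^* = P_{N-2}u$. First I would define the leap-frog local truncation $\xi^n$ analogously to \eqref{trunc-eq}, i.e.\ as the residual produced when the projected exact solution is substituted into LLF-MP at every time level, and establish $|\xi^n|\leq C(\tau^2+N^{-r})$ via the symmetric Taylor expansions
$$\delta_t(u^*)^n = \partial_t(u^*)^n + \mathcal{O}(\tau^2),\qquad \tfrac{1}{2}\bigl((u^*)^{n+1}+(u^*)^{n-1}\bigr) = (u^*)^n + \mathcal{O}(\tau^2),$$
together with Lemmas \ref{pro-error-estimate} and \ref{semi-pro-error-estimate}. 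This is the direct leap-frog analog of Lemma \ref{trunc-lem}. The starting error $\|e^1\|_h+\sigma|e^1|_h\leq C(\tau^2+N^{-r})$ is already supplied by Lemma \ref{start-step-lem}, since LLF-MP uses the same one-step initializer \eqref{order-2-scheme} as LCN-MP.

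Subtracting \eqref{full-discr-LILF} from the truncation identity yields an error equation
$$\xi^n \;=\; (\mathbb{I}-\sigma\mathbb{D}_2)\,\delta_t e^n \;+\; a\,\mathbb{D}_1\hat{e}^n \;+\; (F_\delta)^n \;+\; (G_\delta)^n,\qquad n\geq 1,$$
in which the nonlinear discrepancies $(F_\delta)^n$ and $(G_\delta)^n$ are to be decomposed into five pieces each, in exact analogy with the LCN-MP proof. The first four pieces contribute $C(\tau^2+N^{-r})$ by means of Lemmas \ref{approximation-lemma}, \ref{pro-error-estimate}, \ref{semi-pro-error-estimate} and Taylor expansion, while the fifth piece carries the unknown errors $e^{n-1},e^n,e^{n+1}$ and is controlled using Theorem \ref{prior-estimate} and the semi-norm equivalence \eqref{semi-norm-equiv-ineq2}. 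I would then take the discrete inner product of the error equation with the three-level test vector $2\hat{e}^n := e^{n+1}+e^{n-1}$; the anti-symmetry of $\mathbb{D}_1$ annihilates the linear dispersion contribution, while the symmetry of $\mathbb{I}-\sigma\mathbb{D}_2$ produces the key telescoping identity
$$\bigl((\mathbb{I}-\sigma\mathbb{D}_2)\delta_t e^n,\;2\hat{e}^n\bigr)_h \;=\; \frac{1}{2\tau}\bigl(\mathcal{E}^{n+1} - \mathcal{E}^{n-1}\bigr),\qquad \mathcal{E}^n := \|e^n\|_h^2 + \sigma|e^n|_h^2.$$

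Summing the resulting inequality from $n=1$ to $m$ telescopes over the odd/even index pattern, leaving $\mathcal{E}^{m+1}+\mathcal{E}^m$ on the left against the seeds $\mathcal{E}^0+\mathcal{E}^1$ (both $\mathcal{O}((\tau^2+N^{-r})^2)$ by Lemma \ref{start-step-lem} and Lemmas \ref{pro-error-estimate}, \ref{semi-pro-error-estimate}) plus the accumulated consistency and error terms. The discrete Gronwall inequality (Lemma \ref{discr-Gron-ineq}) then yields $\mathcal{E}^{n+1}\leq C(\tau^4+N^{-2r})$, and the triangle-plus-projection estimate combined with the equivalence \eqref{semi-norm-equiv-ineq1} and the discrete Sobolev inequality (Lemma \ref{Discr-Sobol-ineq})---the same post-processing used at the end of Theorem \ref{L2-H1-error-theorem}---upgrades this to the claimed bound $\|u^n-\mathbf{U}^n\|_{\infty,h}\leq C(\tau^2+N^{-r})$.

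The main technical obstacle I anticipate is the fifth piece $(G_5)^n$ tested against $2\hat{e}^n$: unlike in the two-level LCN-MP argument, the unknown error $e^{n+1}$ enters both through the test vector and through $\widehat{\mathbf{U}}^n \odot e^{n+1}$-type terms living at mixed time levels, so one must exploit the anti-symmetry of $\mathbb{D}_1$ to shift the spatial derivative off $e^{n+1}$ onto the a priori bounded quantities $(u^*)^n$ or $\mathbf{U}^n$, and then absorb the remaining $\mathbb{D}_1 e^{n+1}$ factor via Cauchy--Schwarz combined with \eqref{semi-norm-equiv-ineq2}. Once this discrete integration-by-parts step is executed cleanly, the remainder of the energy argument is a faithful translation of the LCN-MP proof.
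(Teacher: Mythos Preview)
Your proposal is correct and takes essentially the same approach as the paper, which simply states that the proof is similar to that of Theorem \ref{L2-H1-error-theorem} and omits all details. Your outline faithfully adapts the energy argument to the three-level leap-frog structure: the test vector $2\hat{e}^n=e^{n+1}+e^{n-1}$, the telescoping identity $\mathcal{E}^{n+1}-\mathcal{E}^{n-1}$, the five-piece decomposition of the nonlinear discrepancies, and the final Sobolev upgrade are all the natural leap-frog analogs of the LCN-MP proof, and your handling of $(G_5)^n$ via the anti-symmetry of $\mathbb{D}_1$ is exactly the device used in the paper for the corresponding term. One minor cosmetic point: if you define $\xi^n$ exactly in the spirit of \eqref{trunc-eq} (i.e.\ with the projected nonlinearities $(f^*(u))^n,(g^*(u))^n$ rather than the full discrete nonlinearity applied to $u^*$), then the bound is $|\xi^n|\leq C\tau^2$ only, and the $N^{-r}$ contributions enter through $F_1,\dots,F_4$ and $G_1,\dots,G_4$ as in the paper; this does not affect the argument.
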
	
\begin{proof}
	The proof is similar to that of Theorem \ref{L2-H1-error-theorem} and is thus omitted.
\end{proof}

\section{Numerical experiments}\label{Numer-exper}
In this section, some numerical experiments are carried out to show the performance of the schemes LCN-MP and LLF-MP.
The performance of proposed methods will be showed in following aspects:
\begin{itemize}
	\item
	to test the accuracy order of the schemes LCN-MP and LLF-MP.
	\item
	to simulate the migration of the solitary waves.
	\item
	to show the performance in preserving the momentum property.
	\item
	to make comparison with some existing methods.
\end{itemize}
To quantify the numerical results, we define the discrete $L^2$ error and the discrete $L^{\infty}$ error at $t=t_n$ as
\begin{align*}
\|E_u\|^2_{0,h}=h\sum_{j=0}^{N-1}|u(x_j,t_n)_j-u_j^n|^2,\quad
\|E_u\|^2_{\infty,h}=\max_{0\leq j\leq N-1}|u(x_j,t_n)-u_j^n|.
\end{align*}
The corresponding rates of convergence both in time and space are obtained by the formula below
\begin{align*}
\mathrm{Order}=\dfrac{\log(error_1/error_2)}{\log(\delta_1/\delta_2)},
\end{align*}
where $\delta_j,\ error_j\ (j=1,2)$ are step size and the corresponding error with step size $\delta_j$, respectively.
In order to show the preservation of invariants at $n$-th time level, the relative mass,  momentum and energy error at $t=t_n$ are respectively defined as
$$
RI_1=\dfrac{|{\mathcal{I}^n_1}_h-{\mathcal{I}^0_1}_h|}{|{\mathcal{I}^0_1}_h|},\quad
RI_2=\dfrac{|{\mathcal{I}^n_2}_h-{\mathcal{I}^0_2}_h|}{|{\mathcal{I}^0_2}_h|},\quad
RI_3=\dfrac{|{\mathcal{I}^n_3}_h-{\mathcal{I}^0_3}_h|}{|{\mathcal{I}^0_3}_h|}.
$$
where
${\mathcal{I}^n_2}_h=h\sum_{j=0}^{N-1}U_j^n$,\;
${\mathcal{I}^n_2}_h=h\sum_{j=0}^{N-1}((U_j^n)^2-\sigma u_j^n(\mathbb{D}_2\mathbf{U})_j)$ and ${\mathcal{I}^n_3}_h=h\sum_{j=0}^{N-1}(\frac{\gamma}{6}(U^n_j)^3+\frac{a}{2}(U_j^n)^2)$ are the discrete mass, momentum and energy, respectively.
Moreover,
some schemes involved in this section are
given in Table \ref{Table:notations}.
\begin{table}[!htbp]
	\caption{The notations for the various schemes used in the numerical computation.\label{Table:notations}}
	\begin{center}
		\begin{tabular}{lllll}\hline\specialrule{0.0em}{2.0pt}{2.0pt}
			&Notation   &Algorithm description \\\specialrule{0em}{2.0pt}{2.0pt}\hline\specialrule{0.0em}{2.0pt}{2.0pt}
			& LCN-MP   &The algorithm is defined in \eqref{full-discr-LICN}.\\
			& LLF-MP  &The algorithm is defined in \eqref{full-discr-LILF}.\\
			&ELMP-I &The scheme comes from \cite{Caij2017}.\\
			&ELMP-II& The scheme  comes from \cite{Caij2017}.\\
			&ILMP-I &The scheme comes from \cite{Caij2017}.\\
			&ILMP-II&The scheme comes from \cite{Caij2017}.\\
			\hline\specialrule{0em}{2.5pt}{2.5pt}
		\end{tabular}
	\end{center}
\end{table}

\subsection{Migration of a single solitary wave}
The RLW equation has an analytic solution of the form
\begin{align*}
u(x,t)=3c\mathrm{sech}^2(k[x-x_0-vt]),\quad
k=\dfrac{1}{2}\sqrt{\dfrac{\gamma c}{\sigma(a+\gamma c)}},
\end{align*}
which corresponds to the motion of a single solitary wave with amplitude $3c$, initial center at $x_0$, the wave velocity $v=a+\gamma c$. All computations are done with  $a=\gamma=\sigma=1$, $x_0=0$.

\subsubsection{Test accuracy in space and in time}
To investigate the accuracy in space, we take $\tau=1.0e-4$ so that the error in the temporal direction can be negligible. With grid sizes from $N=32$ to $N=64$ in increment of $4$, we solve \eqref{model-eq} by LCN-MP and LLF-MP up to time $T=1$.
For exploring the time accuracy, we fix the space step $N=1024$, so that the numerical errors are dominated mainly by the temporal ones. With a sequence of time step $\tau=0.1,0.05,0.025,0.0125,0.00625$, we also compute the numerical errors at $T=1$. In the two cases, we choose $c=3/2$ and set the space interval $x\in[-30,30]$.
The errors of the numerical solution in discrete $L^2$ and $L^{\infty}$ norm  are presented in Fig. \ref{fig:space-accuracy} and Fig. \ref{fig:time-accuracy},  where   a second-order accuracy in time and spectral accuracy in space  are shown clearly. The accuracy test validates the correctness of our methods.

\begin{figure}[!htbp]
	\centering
	\subfigure[LCN-MP]{
		\includegraphics[width=0.35\textwidth,height=0.35\textwidth]{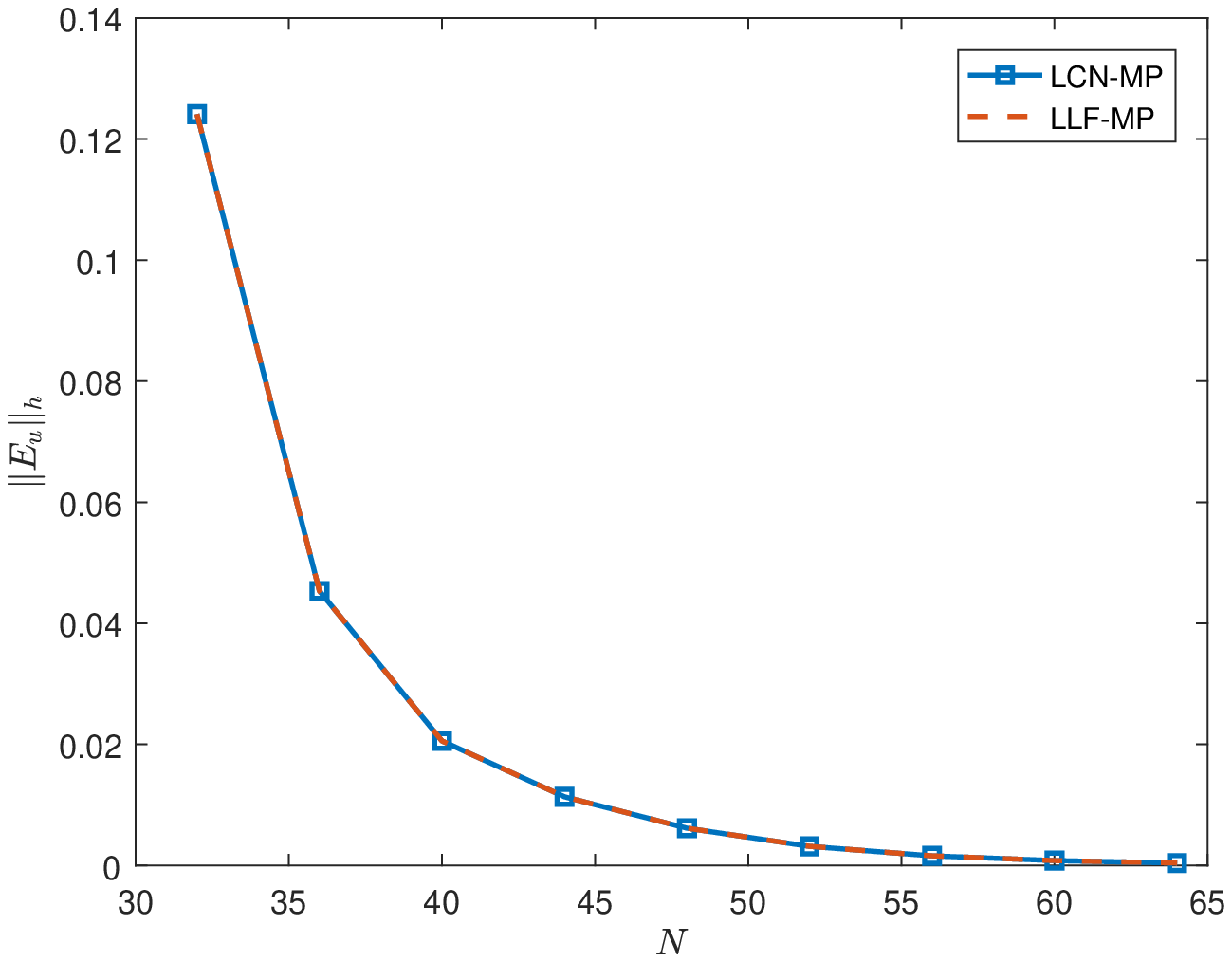}}\quad
	\subfigure[LLF-MP]{
		\includegraphics[width=0.35\textwidth,height=0.35\textwidth]{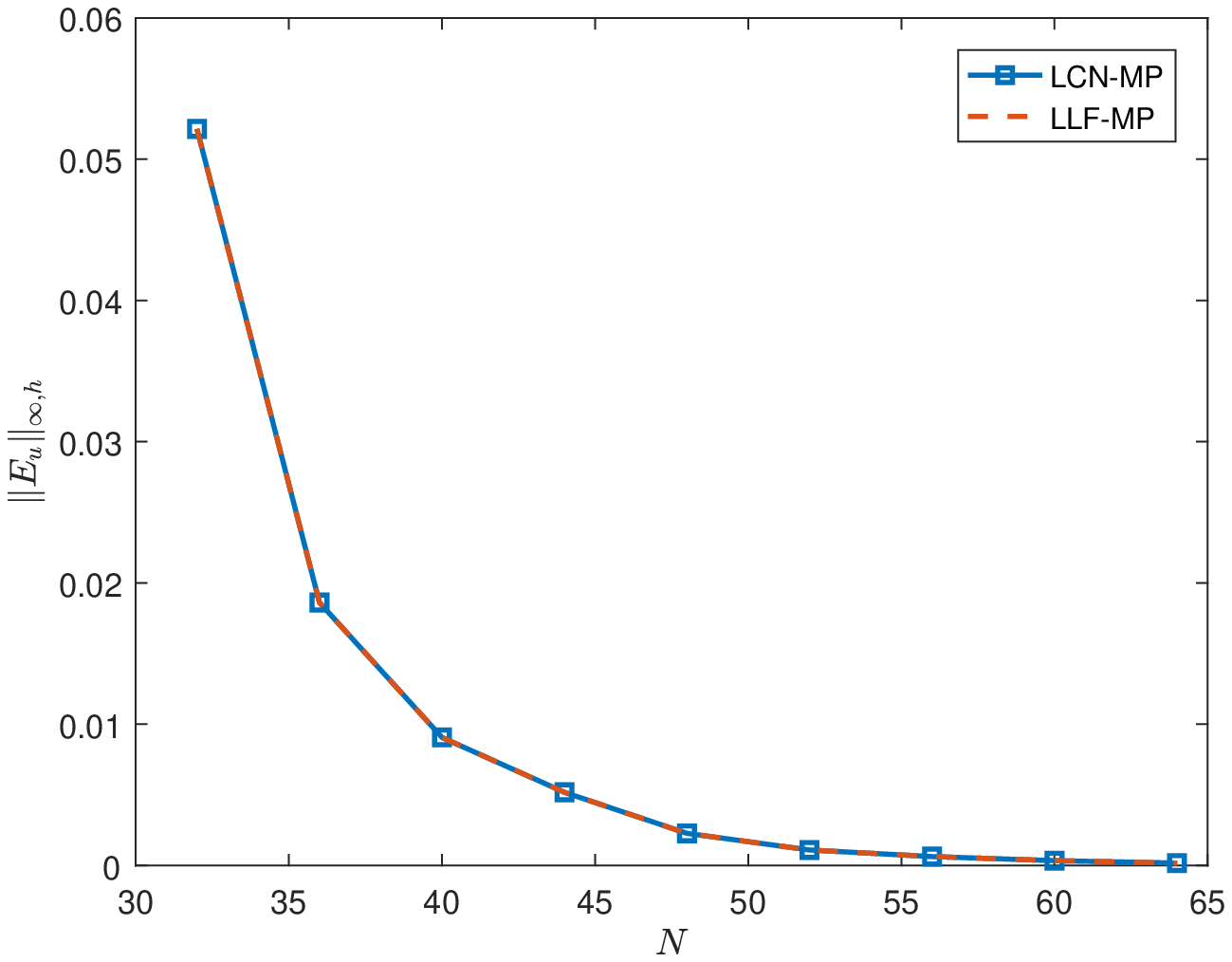}}
	\caption{\small  The accuracy of numerical for the space direction with using fixed time step $\tau=1.0e-4$ by the schemes LCN-MP and LLF-MP.  \label{fig:space-accuracy}}
\end{figure}

\begin{figure}[!htbp]
	\centering
	\subfigure[LCN-MP]{
		\includegraphics[width=0.35\textwidth,height=0.35\textwidth]{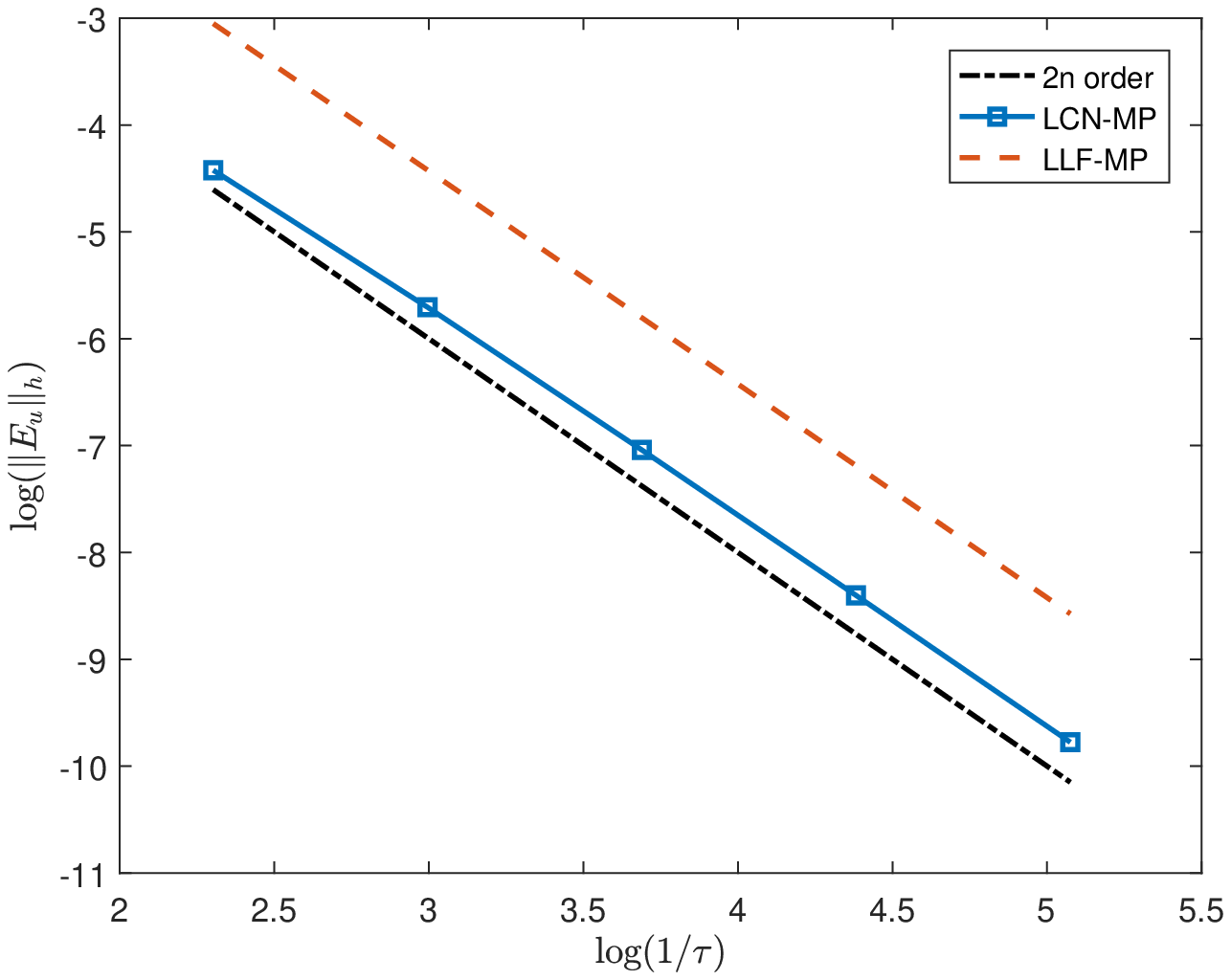}}\quad
	\subfigure[LLF-MP]{
		\includegraphics[width=0.35\textwidth,height=0.35\textwidth]{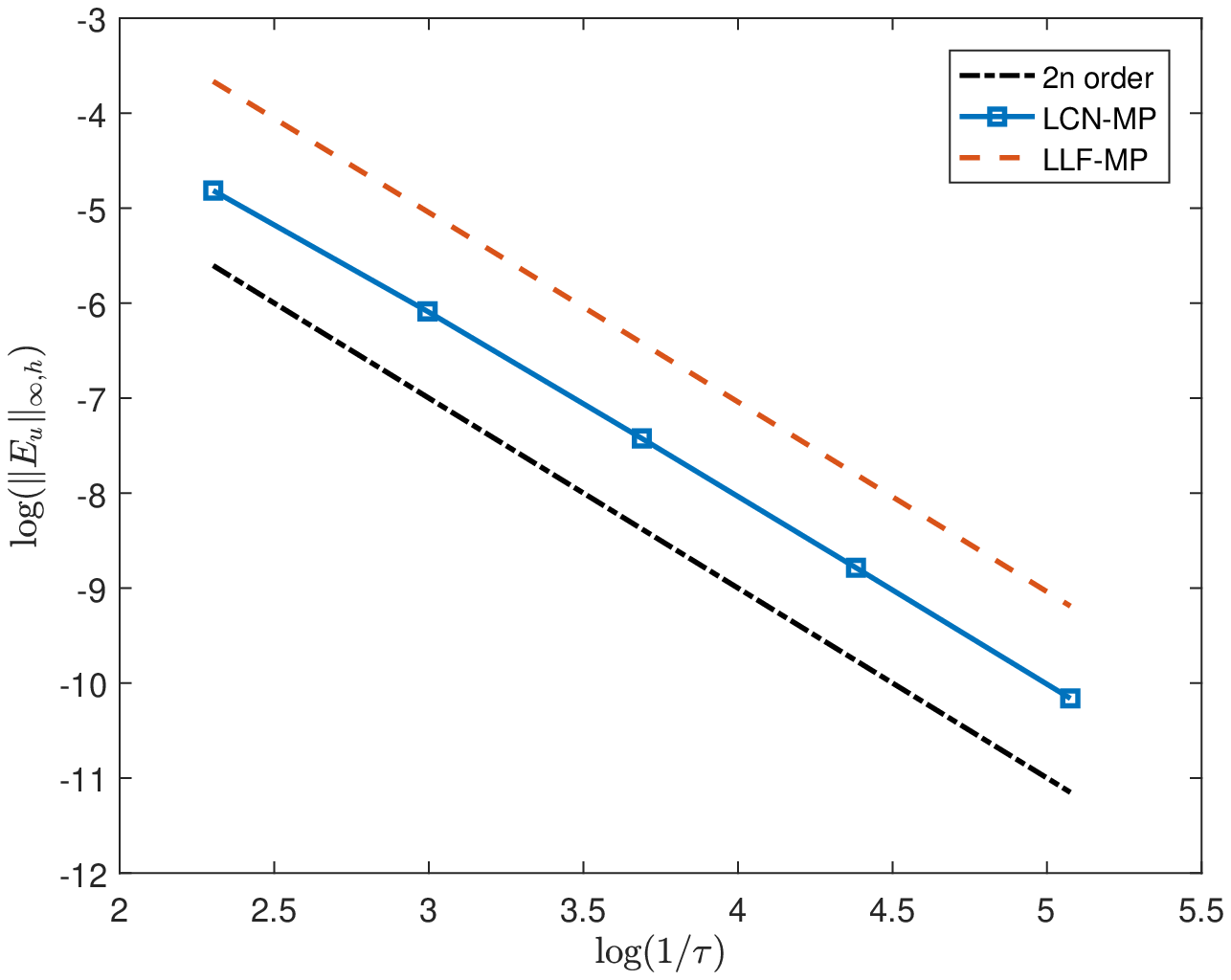}}
	\caption{\small   The accuracy of numerical for the temporal direction with using fixed space step $N=1024$ by the schemes LCN-MP and LLF-MP.  \label{fig:time-accuracy}}
\end{figure}

\subsubsection{The propagation of a single solitary wave}
In this test, the proposed schemes are performed with $x\in [-30,30]$, $N=256$, $\tau=1.0e-3$ and $c=1/3$.
Fig. \ref{fig:travel-fig} presents the wave profile of the numerical solution for RLW equation from $t=0$ to $t=6$.
Compared with the exact wave profile, we can see clearly that the wave shapes of LCN-MP and LLF-MP  are captured very well.

\begin{figure}[!htbp]
	\centering
	\subfigure[Exact solution]{
		\includegraphics[width=0.3\textwidth,height=0.3\textwidth]{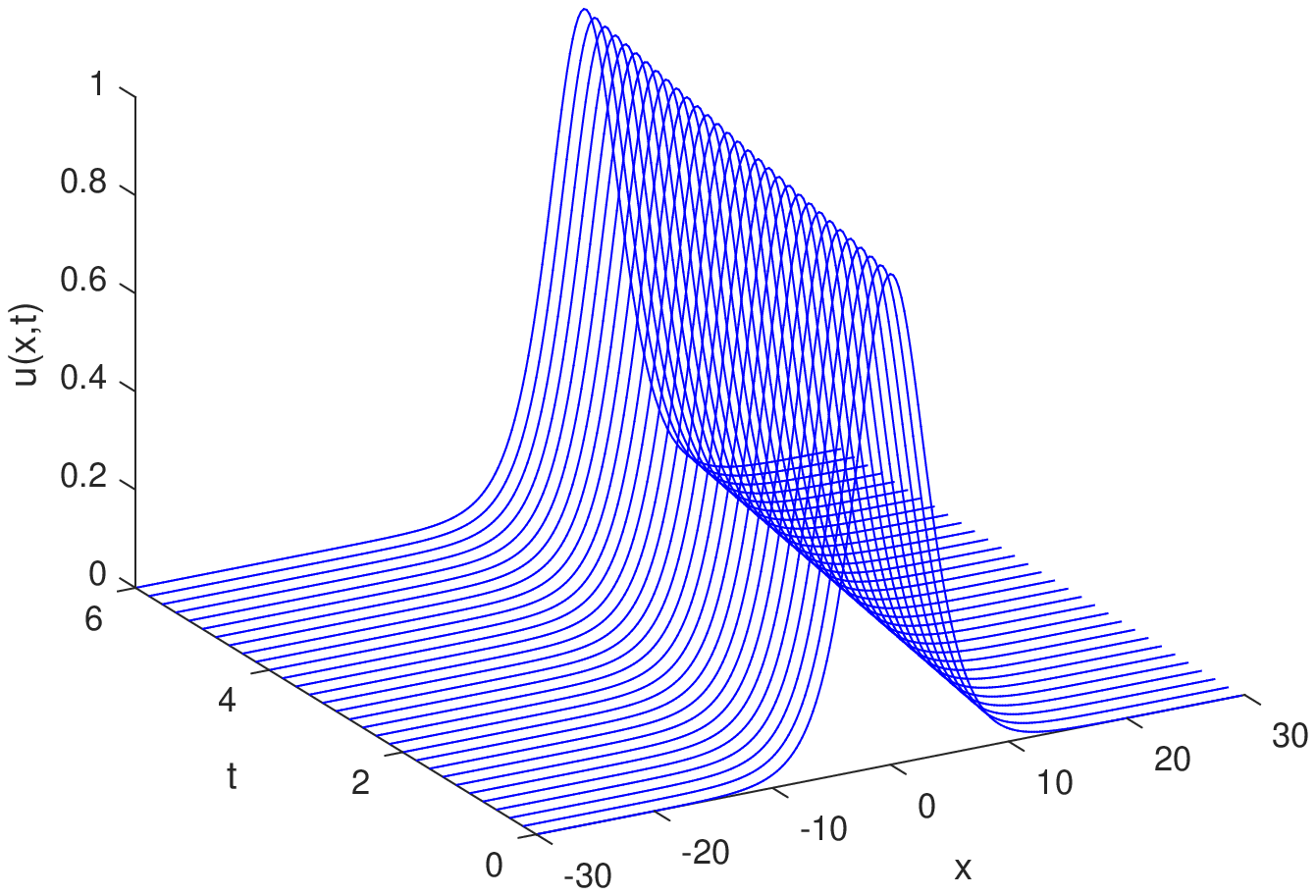}}
	\subfigure[LCN-MP]{
		\includegraphics[width=0.3\textwidth,height=0.3\textwidth]{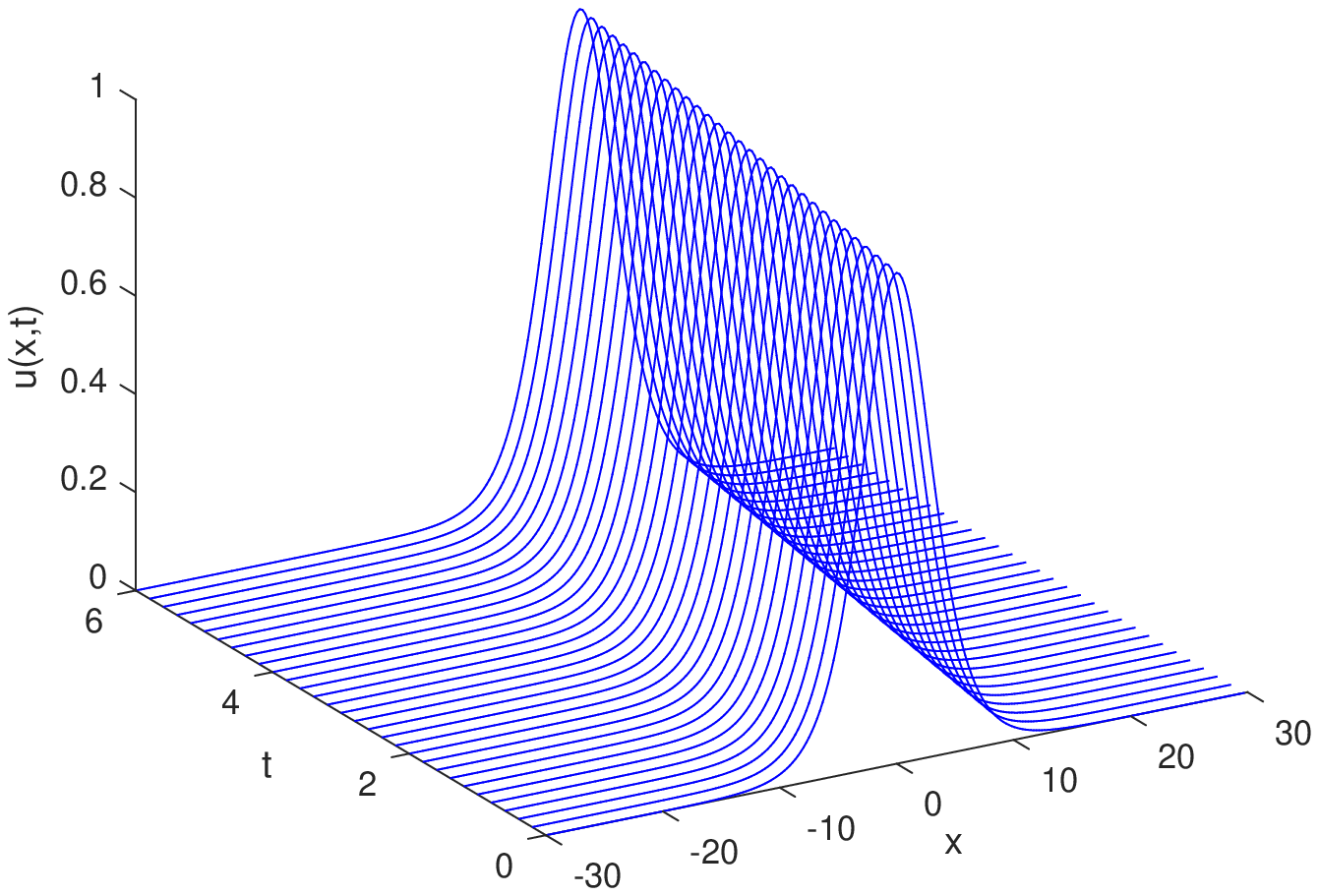}}
	\subfigure[LLF-MP]{
		\includegraphics[width=0.3\textwidth,height=0.3\textwidth]{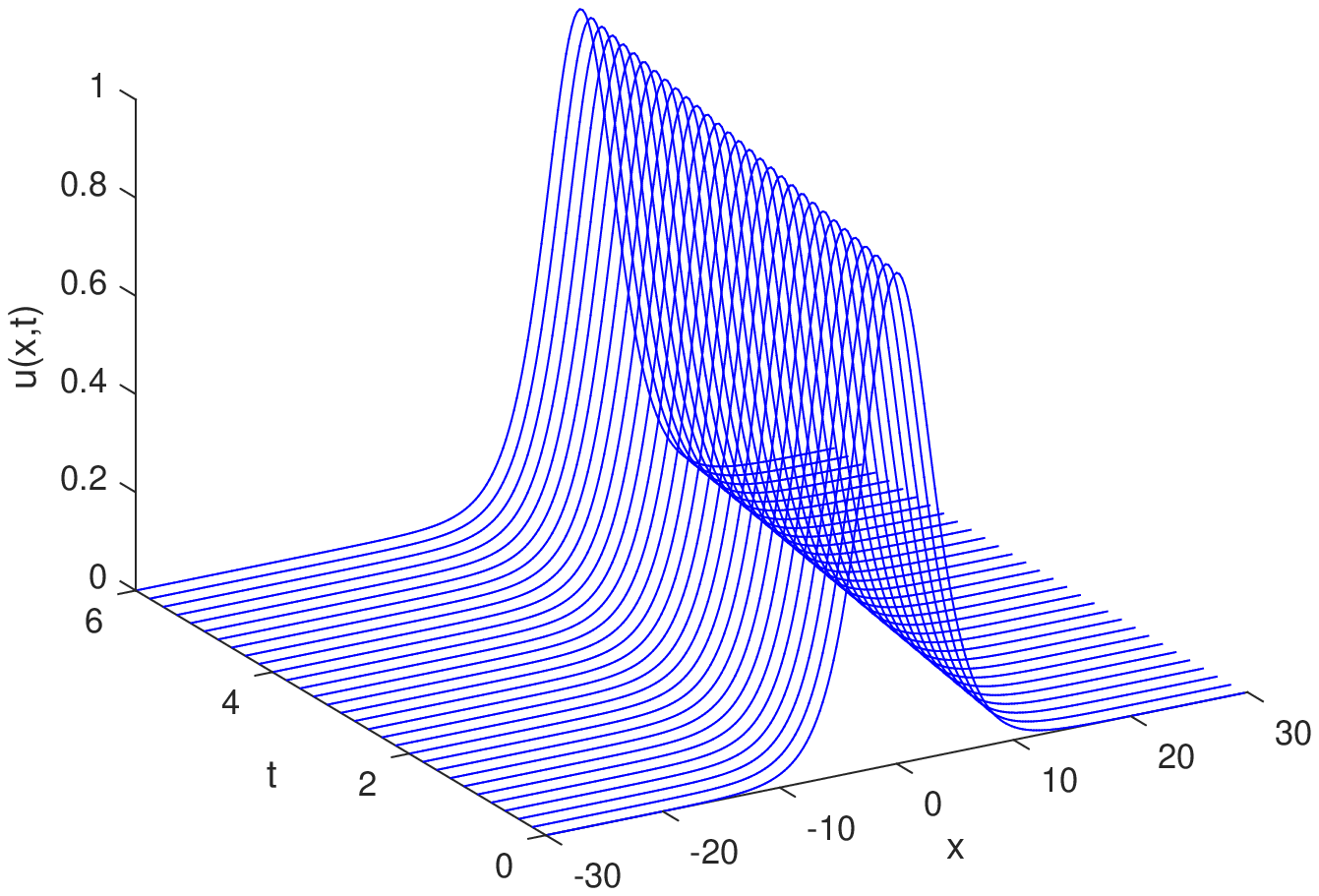}}
	\caption[]{\small Profile of a single solitary wave $u(x,t)$ from $t=0$ to $t=6$.   \label{fig:travel-fig}}
\end{figure}

\subsubsection{Test conservation properties}
In order to test these conservation properties, we take   $\tau=0.025$ with $N=256$, $c=1/3$ and computational interval $x\in[-30,30]$. The run of the algorithm is continued up to  $T=100$. In view of the relative errors in the  mass, momentum and energy conservation laws (see Fig. \ref{fig:test-invariants}), we can find that the discrete momentum can be preserved to round-off errors by the schemes LCN-MP and LLF-MP. In addition, the two schemes
do not preserve the mass and energy exactly. The conclusion is consistent with our theoretical result.
\begin{figure}[!htbp]
	\centering
	\subfigure[]{
		\includegraphics[width=0.30\textwidth,height=0.30\textwidth]{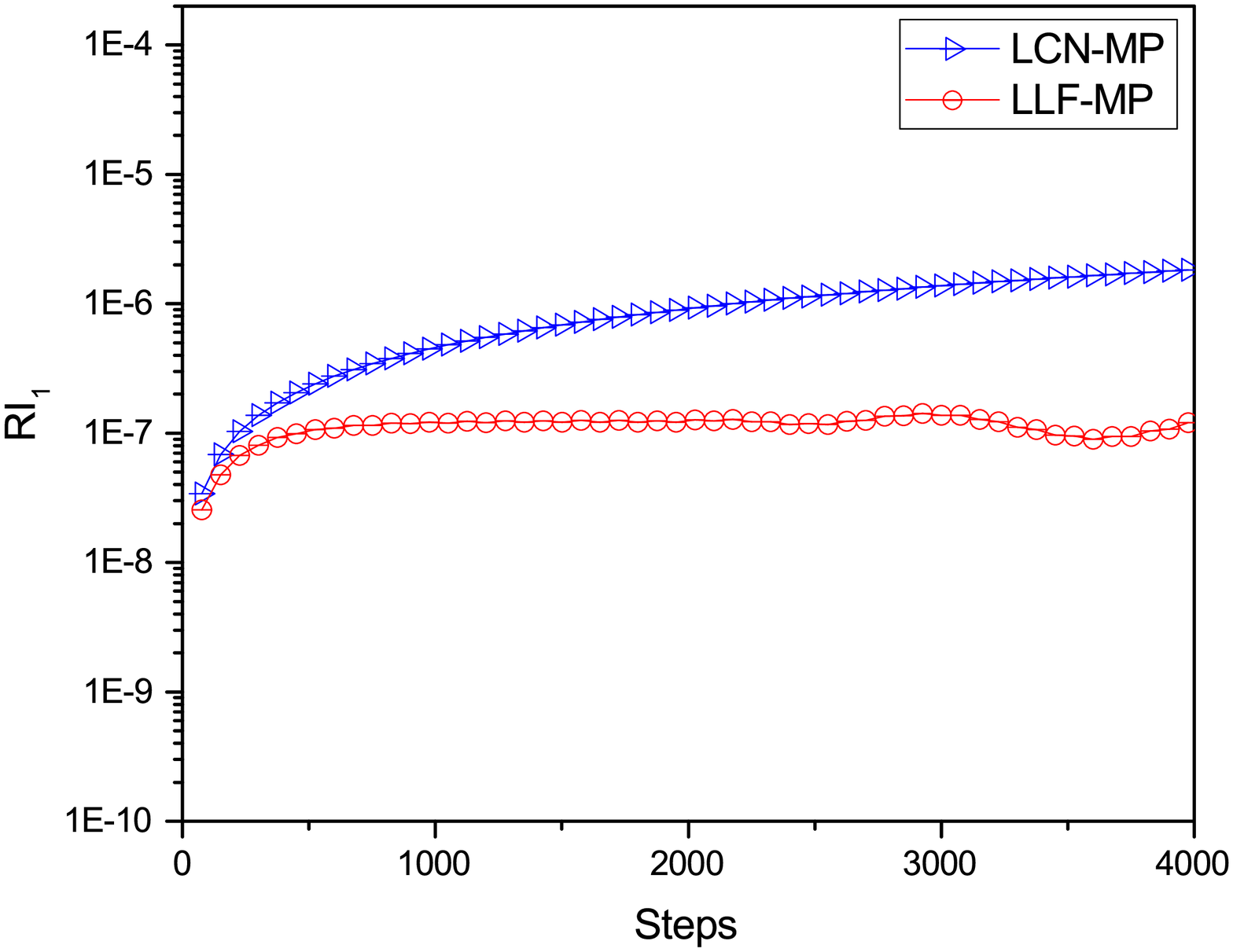}}
	\subfigure[]{
		\includegraphics[width=0.30\textwidth,height=0.30\textwidth]{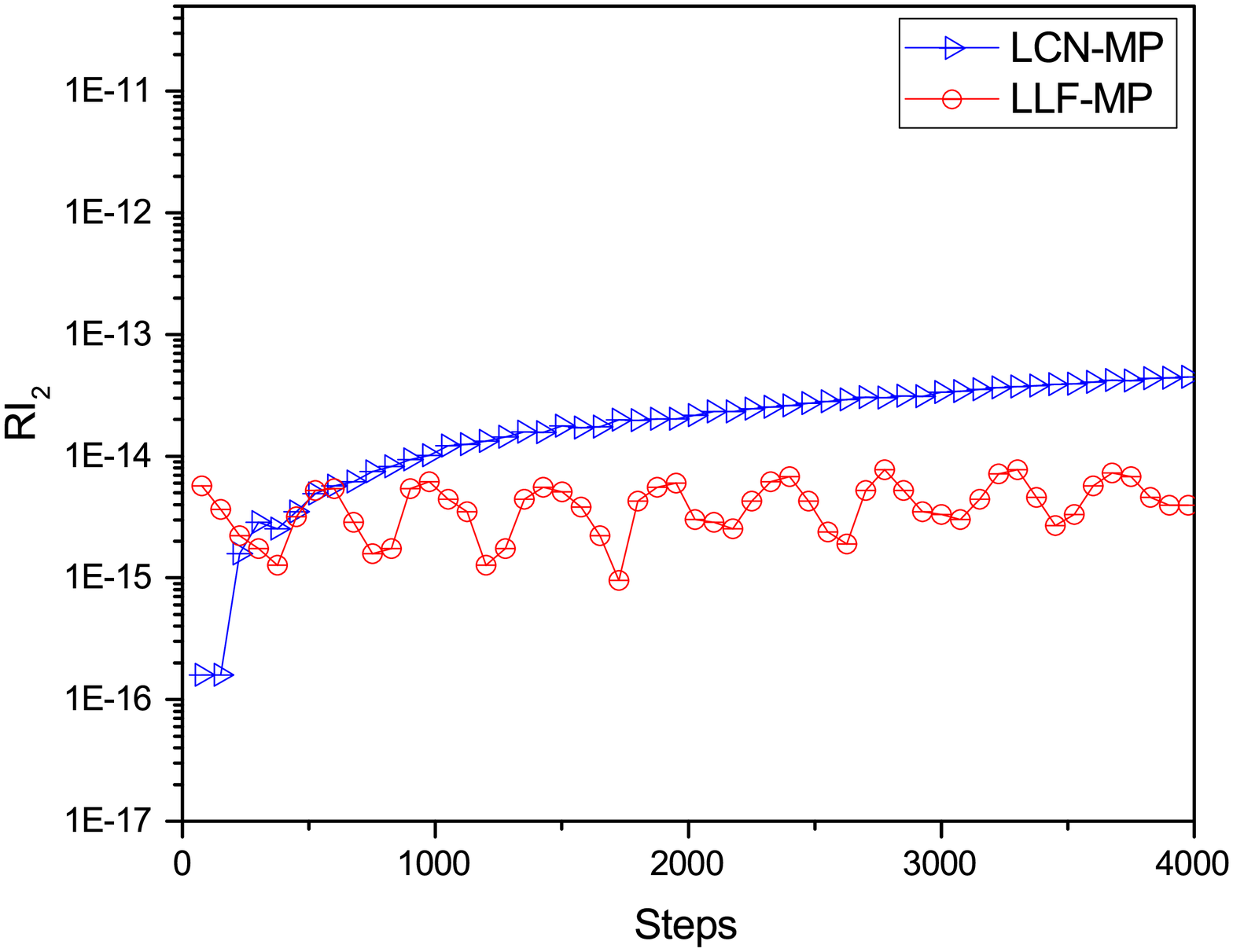}}
	\subfigure[]{
		\includegraphics[width=0.30\textwidth,height=0.30\textwidth]{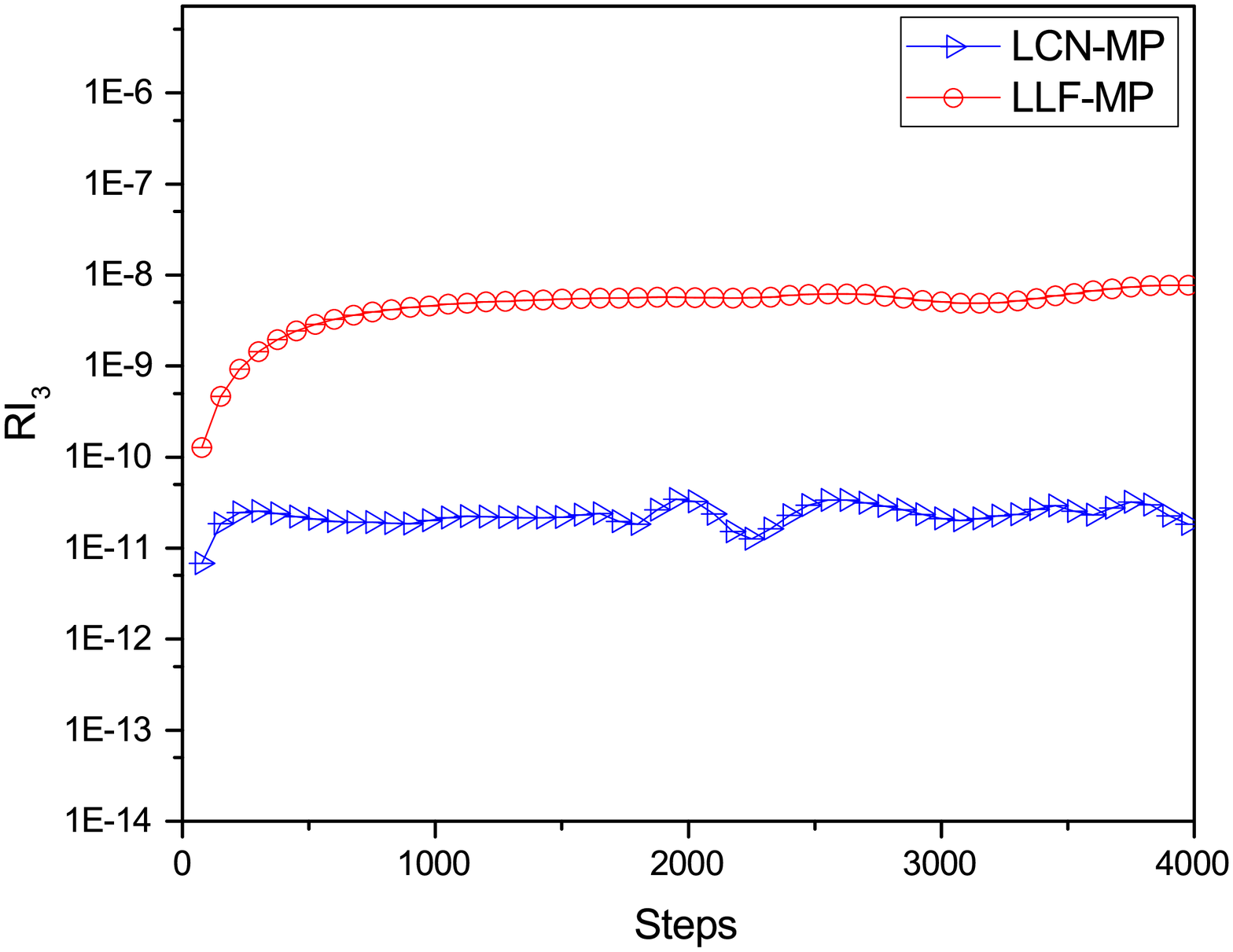}}
	\caption{\small Relative mass error $RI_1$ (a), momentum error $RI_2$ (b), and energy error $RI_3$ (c) of the schemes LCN-MP and LLF-MP for a single solitary wave.   \label{fig:test-invariants}}
\end{figure}

\subsubsection{Compared with some existing schemes}
To show  advantages of our proposed schemes, we compare them with some existing schemes. We choose  the computational domain $x\in[-60,200]$. The run of all algorithm is continued up to time $T = 75$ with  $\tau=0.05$ and $h=0.1$.

The $L^{\infty}$ error and $L^2$ error in solution at different times are displayed in Table \ref{tab:Numer-error-comparison}. Compared with four existing schemes (ELMP-I, ELMP-II, ILMP-I and ILMP-II), one can see that the schemes LCN-MP and LLF-MP perform satisfactory solutions in long-time computation and  LCN-MP provides the most accurate solution than the others.
Table \ref{tab:Numer-Compari} displays the errors in solution and the CPU times for the schemes ILMP-I, ILMP-II, LCN-MP and LLF-MP. It is clear that the errors of the four schemes  decrease as $c$ decreases. Moreover, the schemes LCN-MP and LLF-MP  admit not only  much smaller error but also are more efficient than the rest of ones.  The reason is that  a linear system needs to be solved at each time step, which highly improves the efficiency of numerical computation.
The relative errors of discrete momentum are produced by six different momentum-preserving methods in Fig. \ref{fig:compared-invariants}, but the schemes LCN-MP and LLF-MP are superior than  ILMP-I and ILMP-II.

\begin{table}[!htbp]
	{\caption{The error norms in solution for the single solitary wave  with $\tau=0.05$, $h=0.1$ and $-60\leq x\leq 200$.}\label{tab:Numer-error-comparison}}
	\begin{center}
		\begin{tabular}{l l l l l l l l}\hline\specialrule{0em}{1pt}{1pt}
			\multirow{2}{*}{Method} &\multirow{2}{*}{Error} &\multicolumn{3}{c}{c=1/3} &\multicolumn{3}{c}{c=1/2}\\ \cmidrule(lr) {3-5}  \cmidrule(lr){6-8}
			\quad &\quad &T=25  &T=50 & T=75   &T=25  &T=50 & T=75 \\ \hline\specialrule{0.0em}{2.0pt}{2.0pt}
			ELMP-I & $L^2$ & 3.02e-3 &4.51e-3 &5.85e-3   &6.44e-3 &9.83e-3 &1.33e-2\\
			\quad  & $L^{\infty}$&1.27e-3 &1.83e-3 &2.35e-3    &2.85e-3 &4.26e-3 &5.67e-3 \\
			\specialrule{0.0em}{2.0pt}{2.0pt}
			ELMP-II &$L^2$ &2.14e-3 &3.70e-3 &5.19e-3 &3.61e-3 &6.69e-3 &9.76e-3\\
			\quad  & $L^{\infty}$&8.67e-4 &1.44e-3 &2.00e-3 &1.48e-3 &2.71e-3 &3.94e-3\\
			\specialrule{0.0em}{2.0pt}{2.0pt}
			ILMP-I & $L^2$ &2.49e-4&3.50e-4&4.71e-4&1.37e-3&2.89e-3&4.42e-3 \\
			\quad  & $L^{\infty}$&6.64e-5&1.12e-4&1.67e-4&5.58e-4&1.17e-3&1.79e-3 \\
			\specialrule{0.0em}{2.0pt}{2.0pt}
			ILMP-II &$L^2$ &5.00e-3&8.18e-3&1.12e-2&1.11e-2&1.91e-2&2.72e-2\\
			\quad  & $L^{\infty}$&2.10e-3&3.28e-3&4.43e-3&4.84e-3&8.08e-3&1.13e-2\\
			\specialrule{0.0em}{2.0pt}{2.0pt}		
			LCN-MP&$L^2$&2.20e-4&4.06e-4&5.84e-4&3.41e-4&5.96e-4&8.80e-4 \\
			\quad  & $L^{\infty}$&9.42e-5&1.62e-4&2.28e-4&1.69e-4&2.86e-4&4.05e-4\\	
			\specialrule{0.0em}{2.0pt}{2.0pt}	
			LLF-MP&$L^2$&3.28e-3&5.59e-3&7.82e-3 &8.86e-3&1.59e-2&2.30e-2\\
			\quad  & $L^{\infty}$&1.36e-3&2.21e-3&3.05e-3 &3.79e-3&6.62e-3&9.46e-3\\		
			\hline
		\end{tabular}
	\end{center}
\end{table}

\begin{table}[!htbp]
	{\caption{Numerical comparison by various methods at $T=100$ with $\tau=0.05$, $h=0.1$ and $-60\leq x\leq 200$.}\label{tab:Numer-Compari}}
	\begin{center}
		\begin{tabular}{l l l l l l l}\hline\specialrule{0em}{1pt}{1pt}
			\multirow{2}{*}{Method} &\multicolumn{3}{c}{$c=0.1$} &\multicolumn{3}{c}{$c=0.03$}\\ \cmidrule(lr) {2-4} \cmidrule(lr){5-7}
			&$L^2$ error       &$L^{\infty}$ error &CPU(s)  &$L^2$ error       &$L^{\infty}$ error  &CPU(s) \\ \hline\specialrule{0em}{1pt}{1pt}
			ILMP-I & 3.13e-4 &1.01e-4&35.57 &4.61e-5&1.16e-5&30.22\\
			ILMP-II& 1.40e-3 &4.61e-4&34.14 &1.39e-4&3.98e-5&29.02\\
			LCN-MP&1.24e-4&3.99e-5&5.29 &2.59e-5&5.08e-6&5.25   \\
			LLF-MP&7.05e-4&2.31e-4&5.76 &6.65e-5&1.84e-5&4.73 \\
			\hline
		\end{tabular}
	\end{center}
\end{table}

\begin{figure}[!htbp]
	\centering
	\subfigure{
		\includegraphics[width=0.75\textwidth,height=0.55\textwidth]{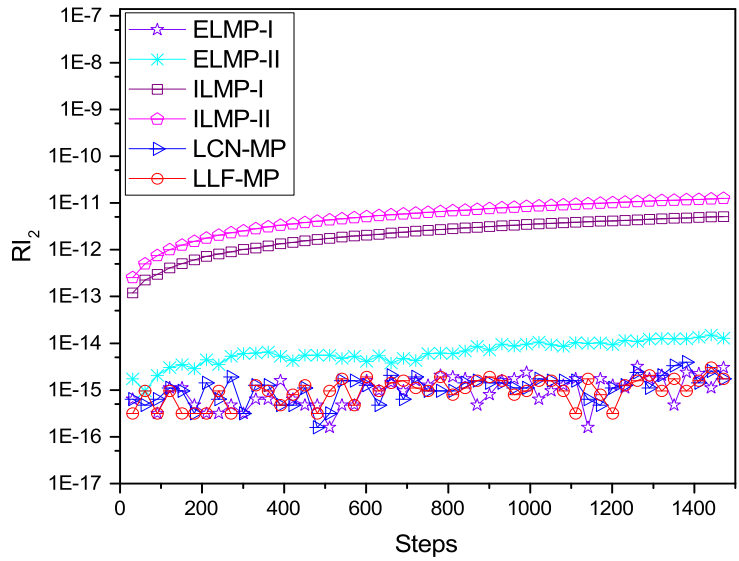}}
	\caption{\small The relative momentum errors of six momentum-preserving methods with $c=1/3$ until $T=75$.    \label{fig:compared-invariants}}
\end{figure}

\subsection{Interaction of two positive solitary waves}
In this test, we study the interaction of two positive solitary waves having different amplitudes and traveling in the same direction. We consider the initial conditions
\begin{align*}
u(x,0)=3c_1\mathrm{sech}^2(m_1(x-x_1))+3c_2\mathrm{sech}^2(m_2(x-x_2)),
\end{align*}
where $m_1=\frac{1}{2}\sqrt{\frac{\gamma c_1}{(\gamma c_1+1)\delta}}$, $m_2=\frac{1}{2}\sqrt{\frac{\gamma c_2}{(\gamma c_2+1)\delta}}$, $x_1=-20$, $x_2=15$, $c_1=1$, $c_2=0.5$. The analytical momentum value can be found as
\begin{align}\label{exact-invariants-eq}
%	I_1=\sum_{i=1}^2\dfrac{6c_i}{m_i},\quad
I_2=\sum_{i=1}^2\left(\dfrac{6c^2_i}{m_i}+\dfrac{24m_ic_i^2\delta}{5}\right).
%	I_3=\sum_{i=1}^2\dfrac{6c_i^2}{m_i}+\dfrac{24c_i^3}{5m_i}
\end{align}
The simulation is performed with   $\tau=0.05$, $h=0.1$,  $\gamma=1$, $\sigma=1$ and $-60\leq x \leq 280.$
\begin{figure}[!htbp]
	\centering
	\subfigure{
		\includegraphics[width=0.35\textwidth,height=0.30\textwidth]{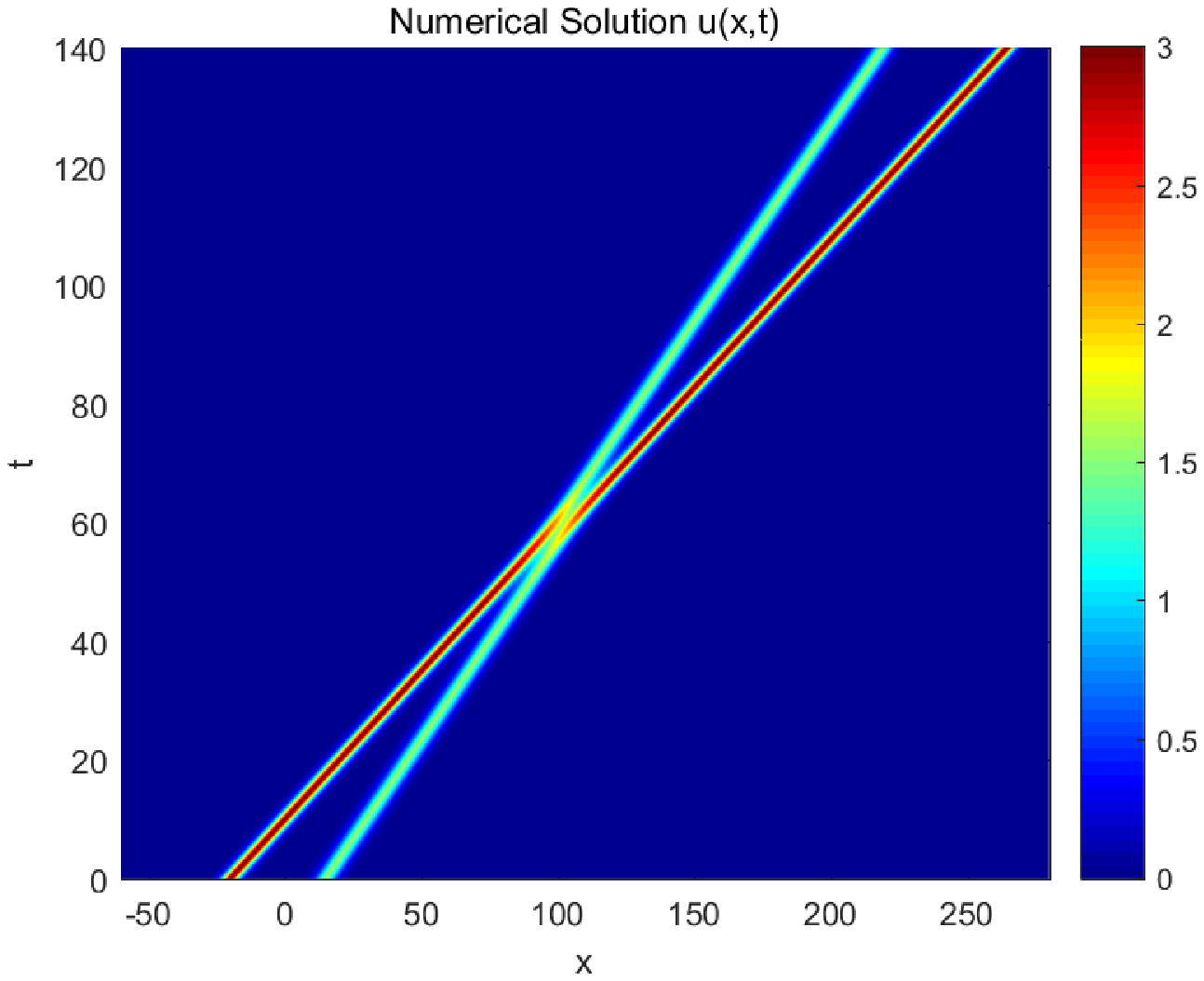}}
	\subfigure{
		\includegraphics[width=0.35\textwidth,height=0.30\textwidth]{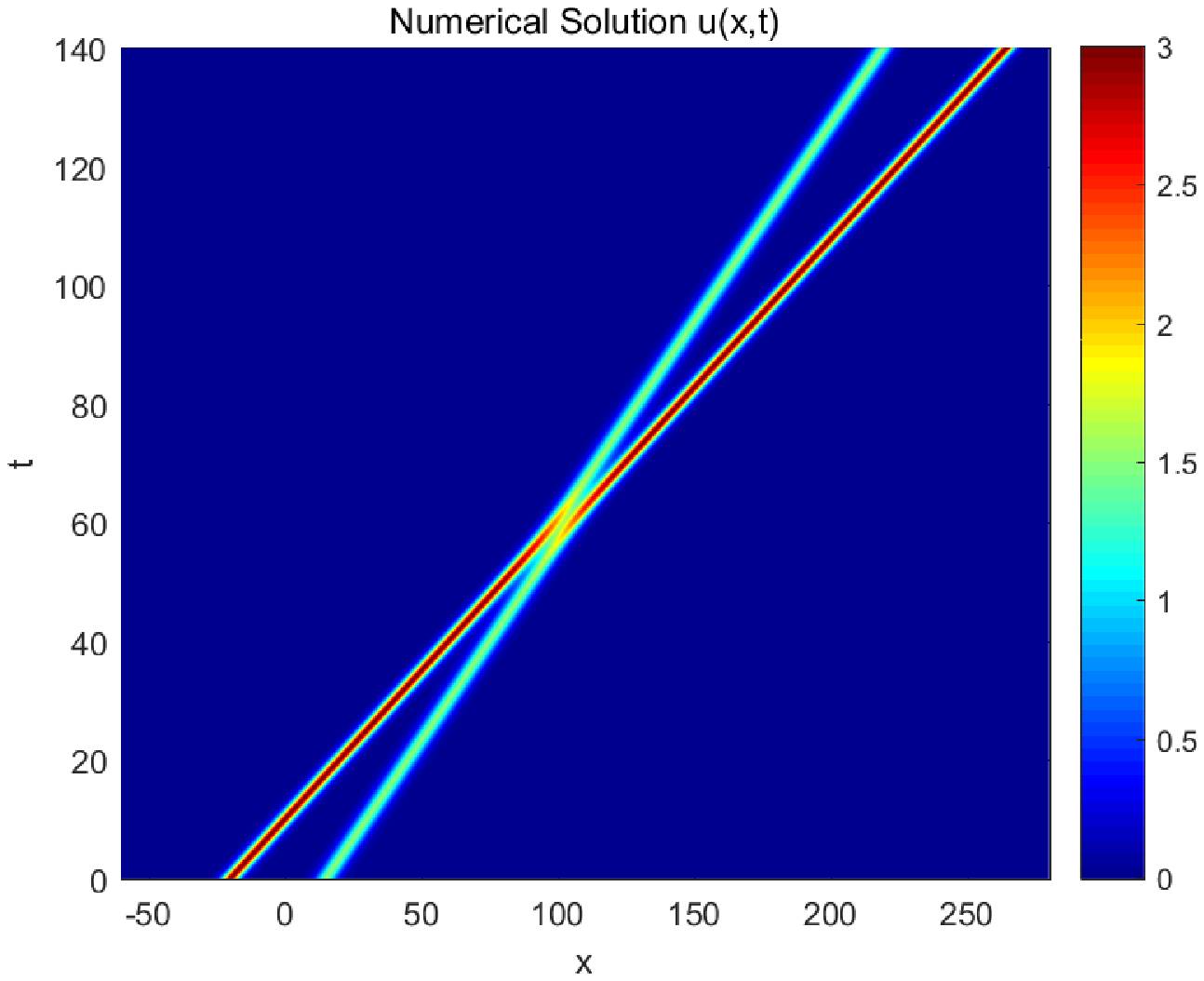}}
	\caption{\small . The profile of numerical solution computed by the schemes LCN-MP (left) and LLF-MP (right).\label{fig:two-solitary}}
\end{figure}
Fig. \ref{fig:two-solitary} shows the profile of numerical solution of the interaction of two positive solitary waves from $t=0$ to $t=140$.  In Table \ref{tab:two-solitary-momentum}, the numerical results of the momentum invariant are obtained by different methods. One can see that the invariant of  momentum by the schemes LCN-MP and LLF-MP almost coincide with analytical values throughout.
The changes in momentum for the four schemes are displayed in Fig \ref{fig:two-solitray-error-momentum}. The results imply that the momentum is captured exactly throughout the simulation.

\begin{table}[!htbp]
	\begin{center}
		\caption{The momentum value for interaction of two solitary waves with different numerical methods.\label{tab:two-solitary-momentum}}\vspace{0.3cm}
		\begin{tabular}{l cc cc cc cc}
			\hline\specialrule{0em}{1pt}{1pt}
			\multirow{2}{*}{Time}&
			\multicolumn{1}{c}{Analytical value}&
			\multicolumn{1}{c}{ELMP-I} &
			\multicolumn{1}{c}{ELMP-II} &
			\multicolumn{1}{c}{ILMP-I} &
			\multicolumn{1}{c}{ILMP-II} &
			\multicolumn{1}{c}{LCN-MP} &
			\multicolumn{1}{c}{LLF-MP} &\\
			\cmidrule(lr){2-2} \cmidrule(lr){3-3} \cmidrule(lr){4-4} \cmidrule(lr){5-5} \cmidrule(lr){6-6} \cmidrule(lr){7-7} \cmidrule(lr){8-8}
			& $I_2$ & ${\mathcal{I}_2}_h$ & ${\mathcal{I}_2}_h$ & ${\mathcal{I}_2}_h$ & ${\mathcal{I}_2}_h$ & ${\mathcal{I}_2}_h$ & ${\mathcal{I}_2}_h$\\
			\midrule
			t=0  &24.210182&24.191390&24.196489 &24.204501 &24.209608 &24.210182  &24.210182 \\
			t=20 &24.210182&24.191390&24.196489  &24.204501 &24.209608 &24.210182  &24.210182  \\
			t=40 &24.210182&24.191390&24.196489  &24.204501 &24.209608 &24.210182  &24.210182  \\
			t=60 &24.210182&24.191390&24.196489  &24.204501 &24.209608 &24.210182  &24.210182 \\
			t=80 &24.210182&24.191390&24.196489  &24.204501 &24.209608 &24.210182  &24.210182 \\
			t=100&24.210182&24.191390&24.196489  &24.204501 &24.209608 &24.210182  &24.210182 \\
			\hline
			%\bottomrule
		\end{tabular}
	\end{center}
\end{table}

\begin{figure}[!htbp]
	\centering
	\subfigure{
		\includegraphics[width=0.3\textwidth,height=0.3\textwidth]{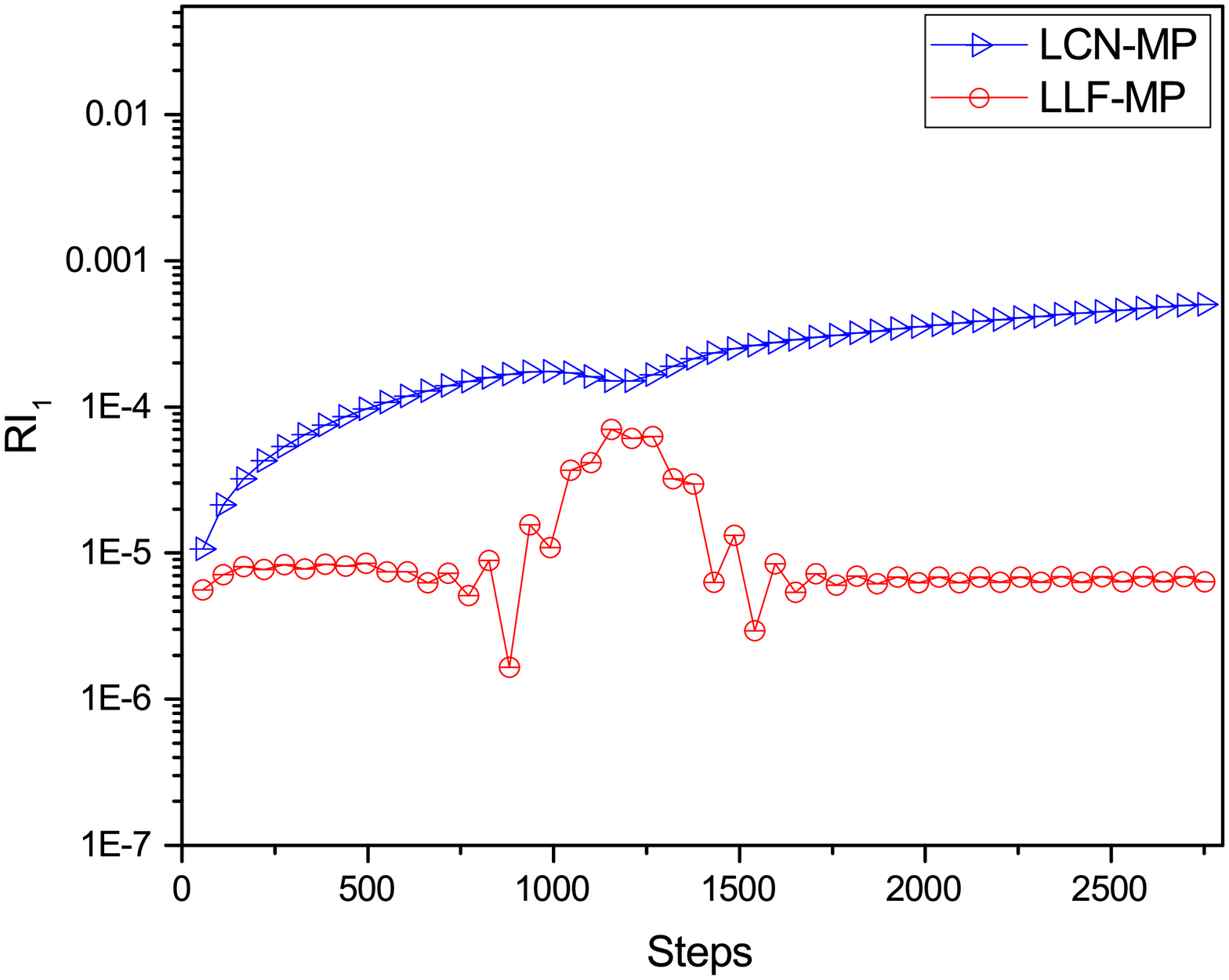}}
	\subfigure{
		\includegraphics[width=0.3\textwidth,height=0.3\textwidth]{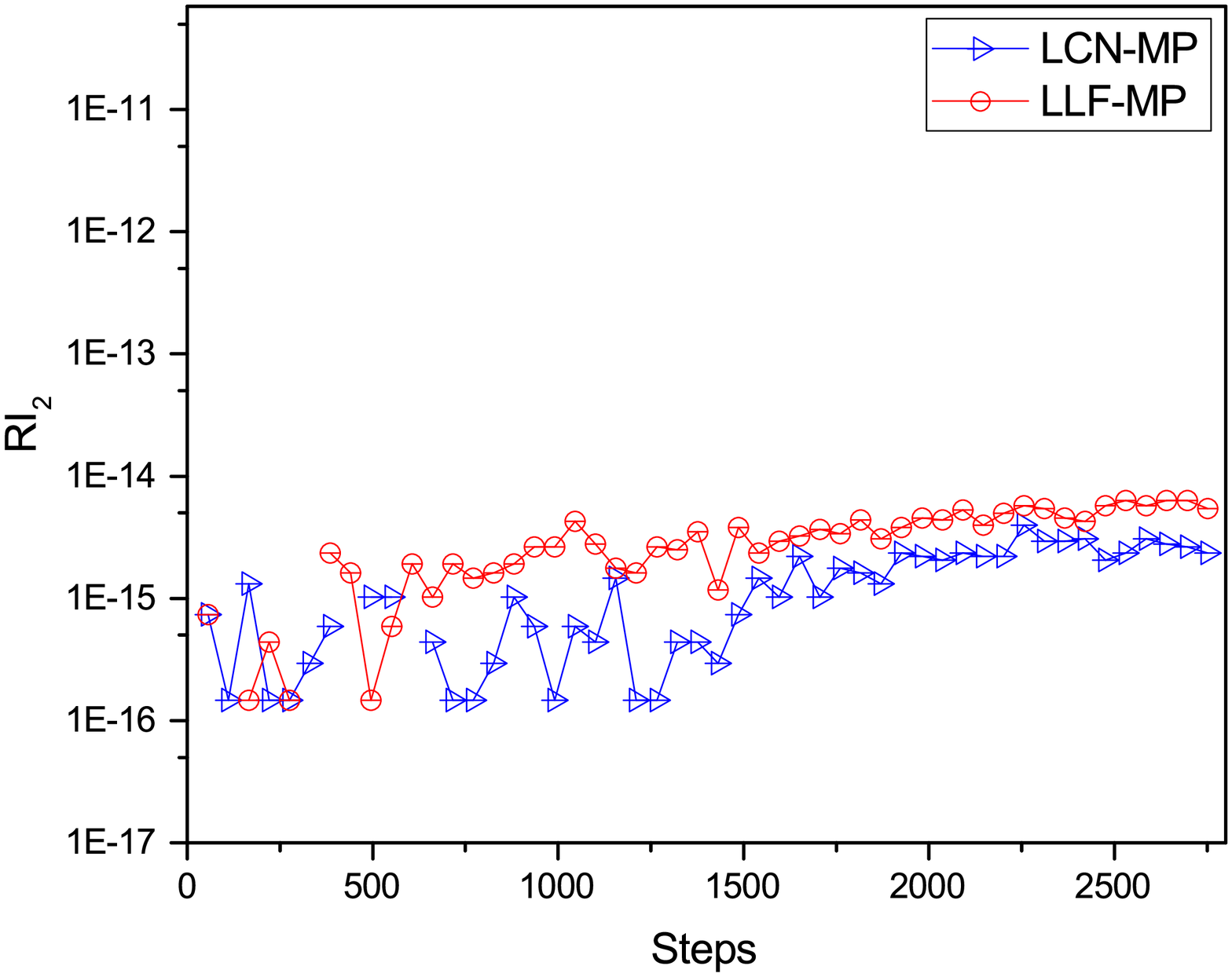}}
	\subfigure{
		\includegraphics[width=0.3\textwidth,height=0.3\textwidth]{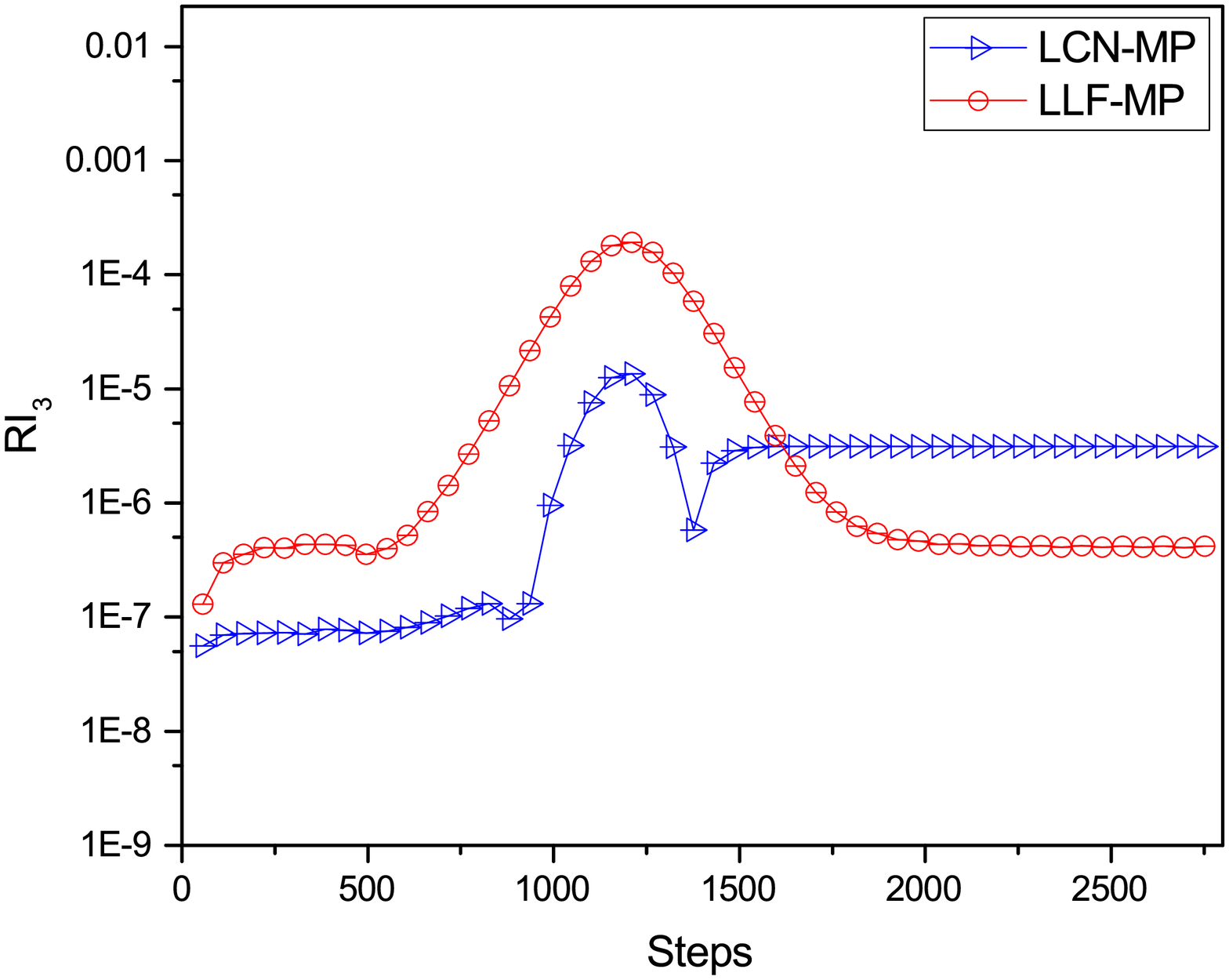}}
	\caption{\small The relative errors in mass, momentum and energy of the schemes LCN-MP and LLF-MP with $c=1$, $\tau=0.05$, $h=0.1$ and $x\in[-60,280]$ until $T=140$.\label{fig:two-solitray-error-momentum}}
\end{figure}

\subsection{The Maxwellian pulse}
In this part, we have examined the evolution of an initial Maxwellian pulse into solitary waves for various values of the parameter $\sigma$.
We take the initial condition
\begin{align*}
u(x,0)=\exp(-(x-7)^2),\quad -40\leq x\leq 100,
\end{align*}
and all simulations are done with $\gamma=1$, $a=1$, $\tau=0.01$ and $h=0.1$.

We  discuss each of the following cases: (i) $\sigma=0.04$, (ii) $\sigma=0.01$ and (iii) $\sigma=0.001$, respectively. The simulation starts at $T=0$ and stops at $T=40$.
Fig. \ref{fig:maxwellian-motion} shows that more and more solitary waves are formed with reducing the value of $\sigma$ by the schemes LCN-MP and LLF-MP.  We can find that only a single soliton is generated for $\sigma=0.04$,  while for $\sigma=0.01$ three stable solitons are generated.
For $\sigma = 0.001$, the Maxwellian pulse decays into about eight solitary waves.
The relative changes in momentum for $\sigma=0.04$, $\sigma=0.01$ and $\sigma=0.001$ are respectively displayed in Fig. \ref{fig:maxwellian-momentum-0001}. It is clear that the schemes LCN-MP and LLF-MP both capture the momentum well and the former performs better than the latter as $\sigma$ decreases.

\begin{figure}[!htbp]
	\centering
	\subfigure{
		\includegraphics[width=0.320\textwidth,height=0.30\textwidth]{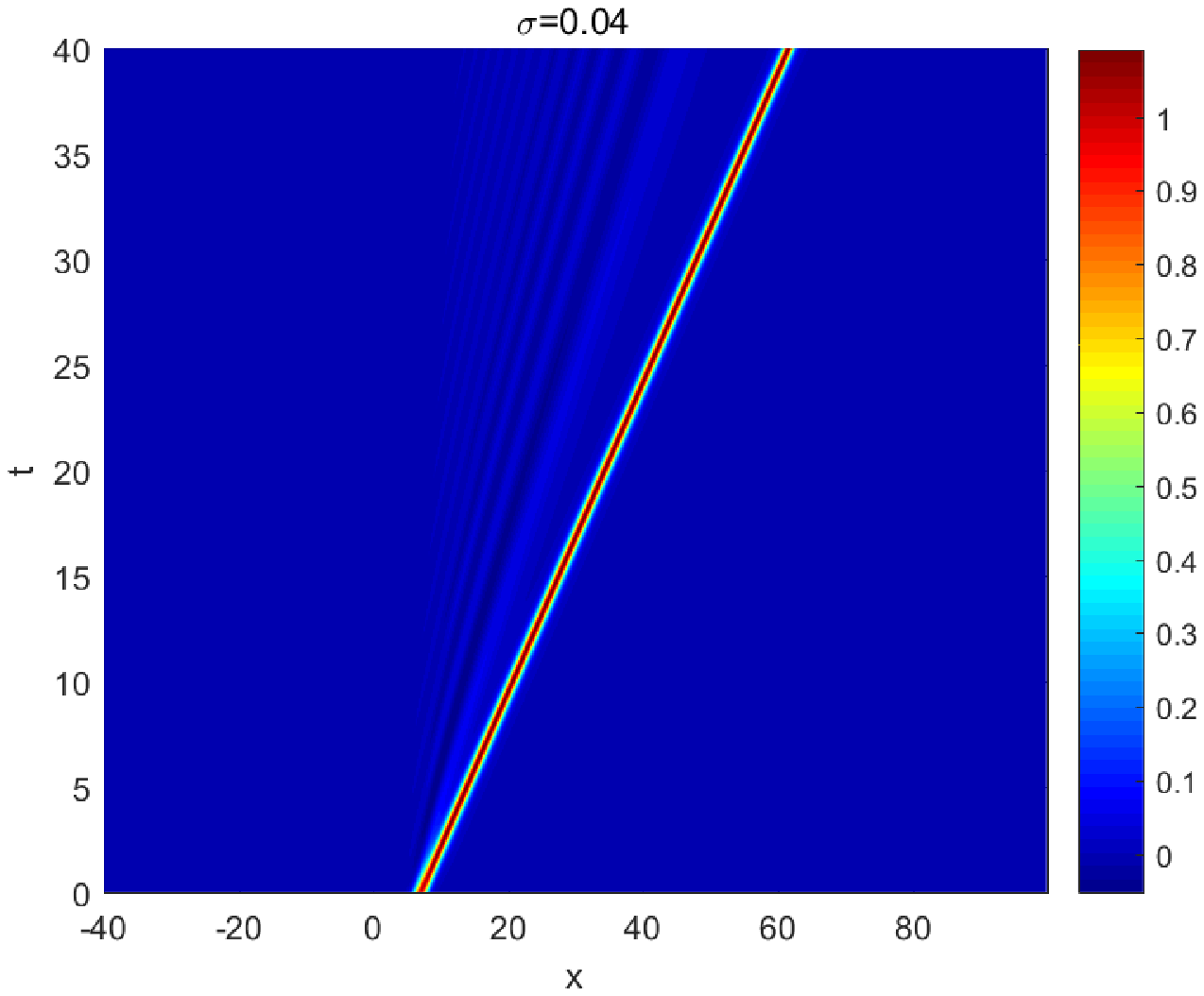}}
	\subfigure{
		\includegraphics[width=0.320\textwidth,height=0.30\textwidth]{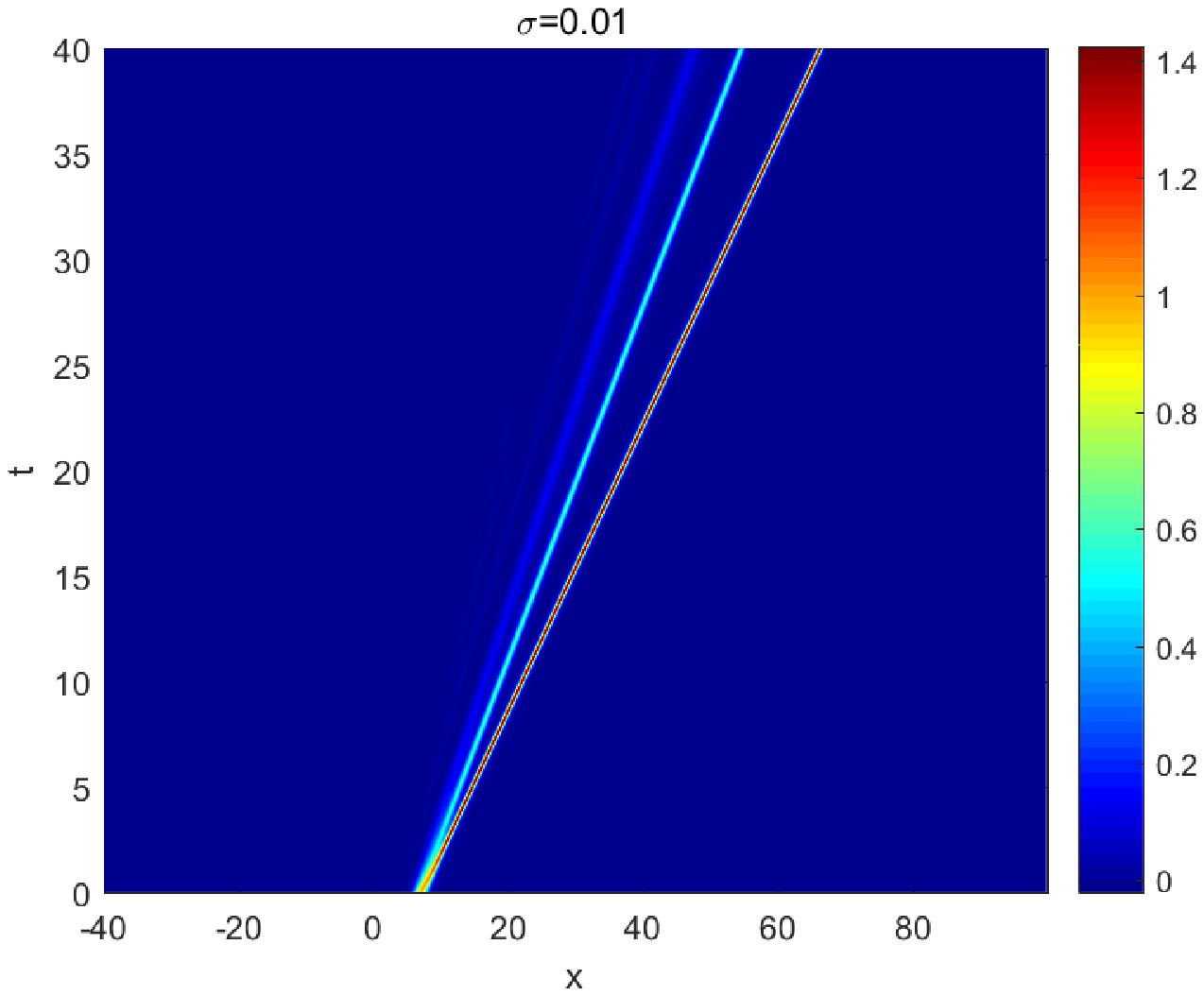}}
	\subfigure{
		\includegraphics[width=0.320\textwidth,height=0.30\textwidth]{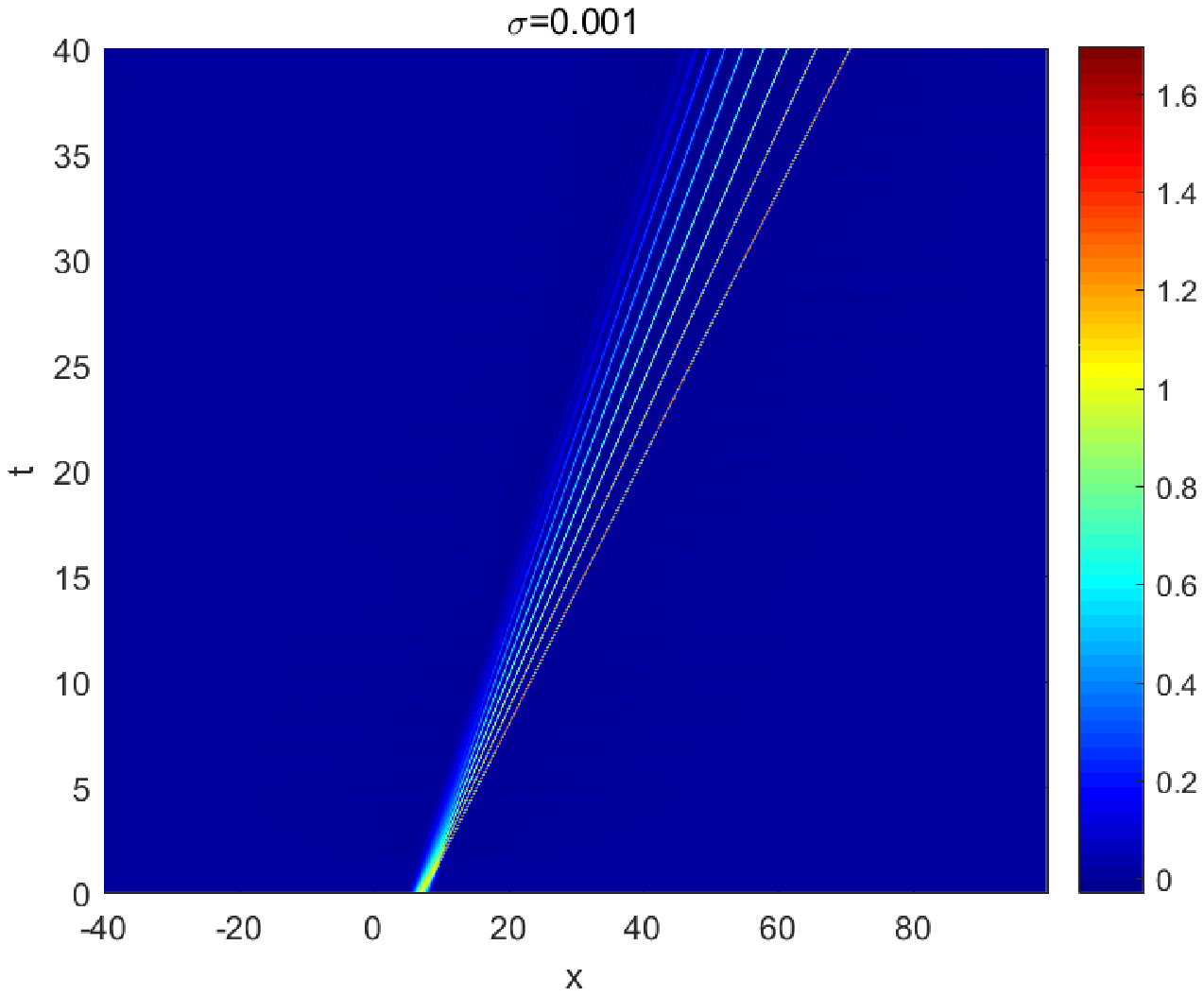}}\\
	\subfigure{
		\includegraphics[width=0.320\textwidth,height=0.30\textwidth]{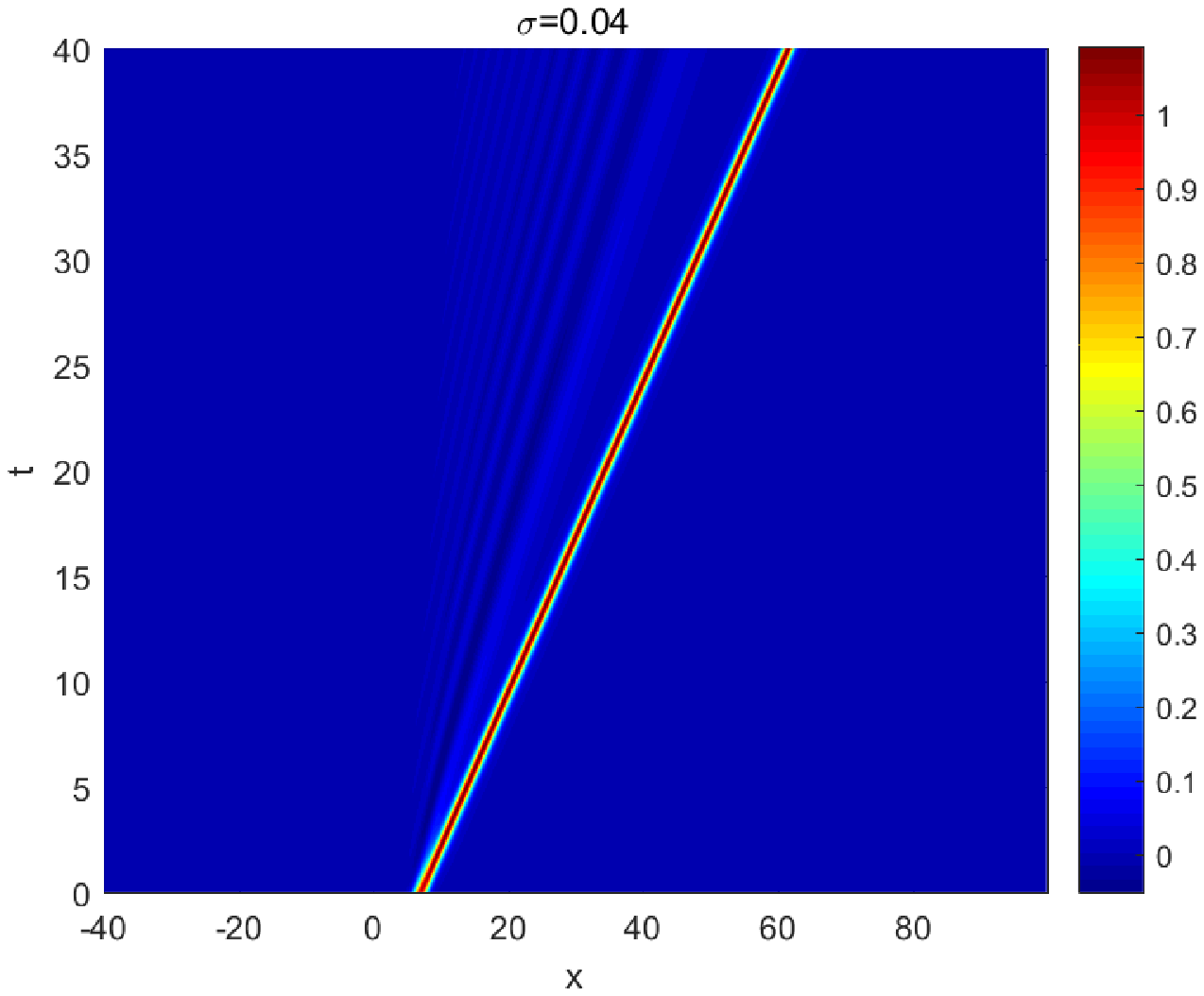}}
	\subfigure{
		\includegraphics[width=0.320\textwidth,height=0.30\textwidth]{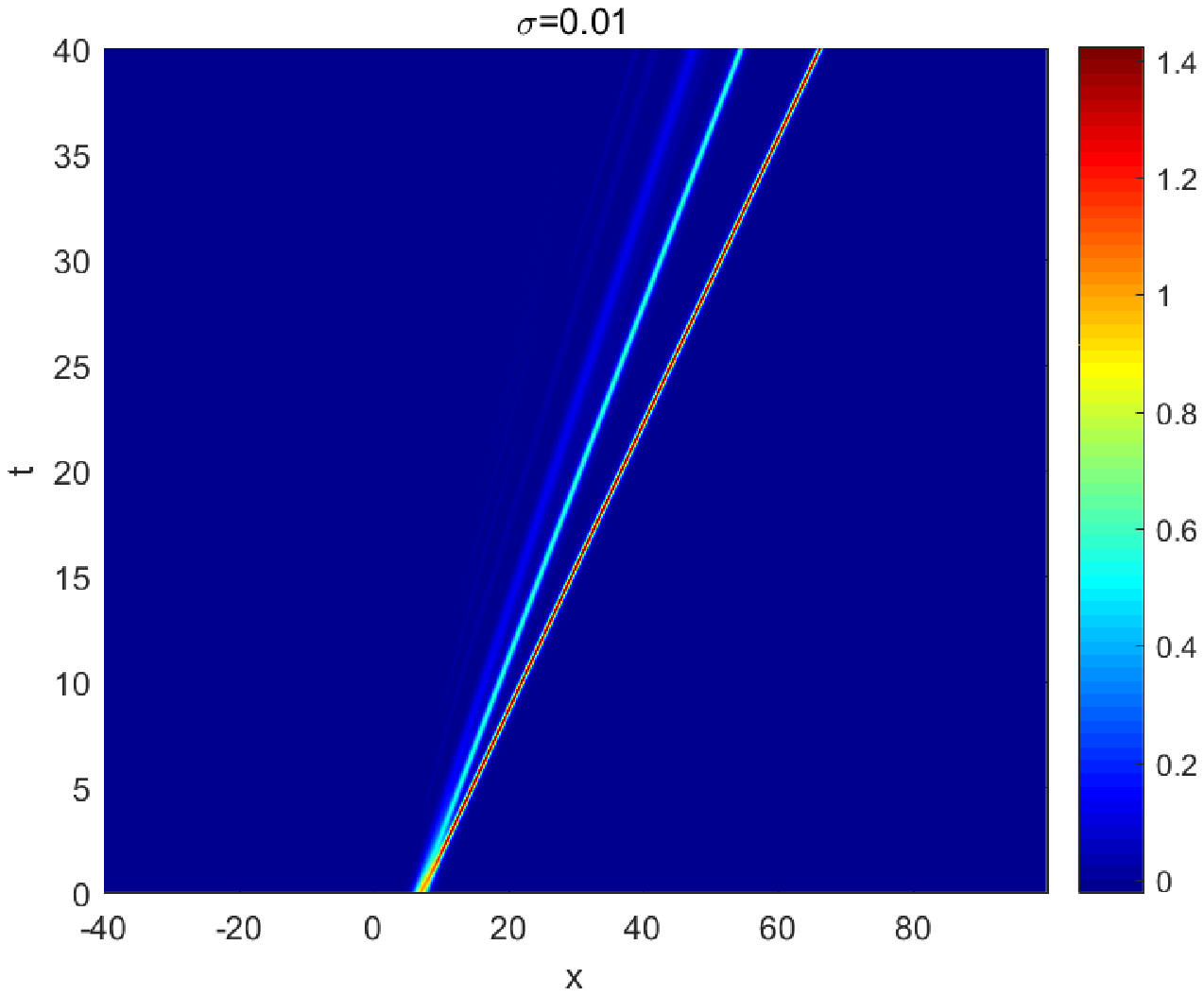}}
	\subfigure{
		\includegraphics[width=0.320\textwidth,height=0.30\textwidth]{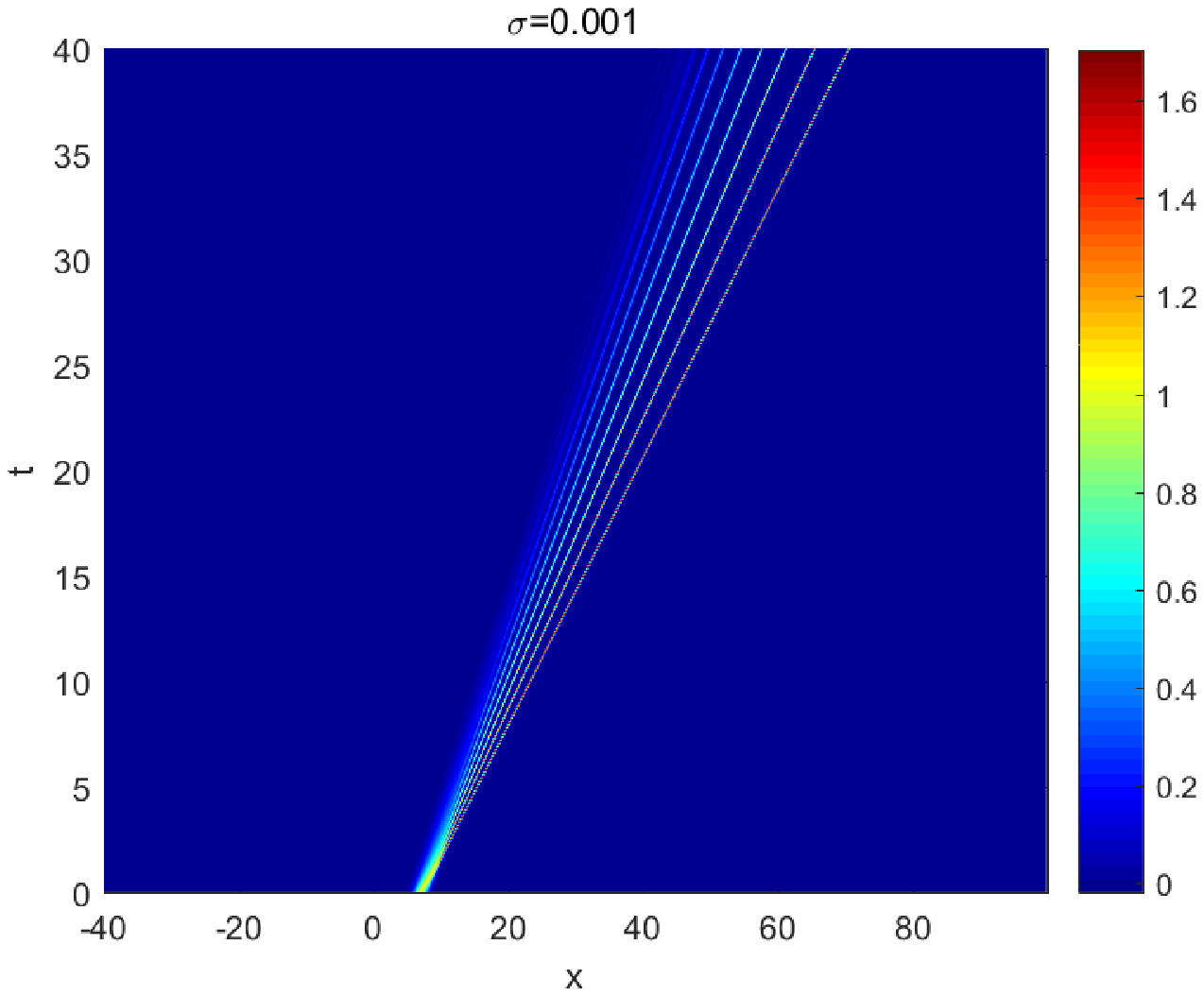}}
	\caption{\small The profiles of the solution with Maxwellian initial condition  computed by the schemes LCN-MP (top) and LLF-MP (bottom) at $T=40$. \label{fig:maxwellian-motion}}
\end{figure}

\begin{figure}[!htbp]
	\centering
	\subfigure{
		\includegraphics[width=0.30\textwidth,height=0.30\textwidth]{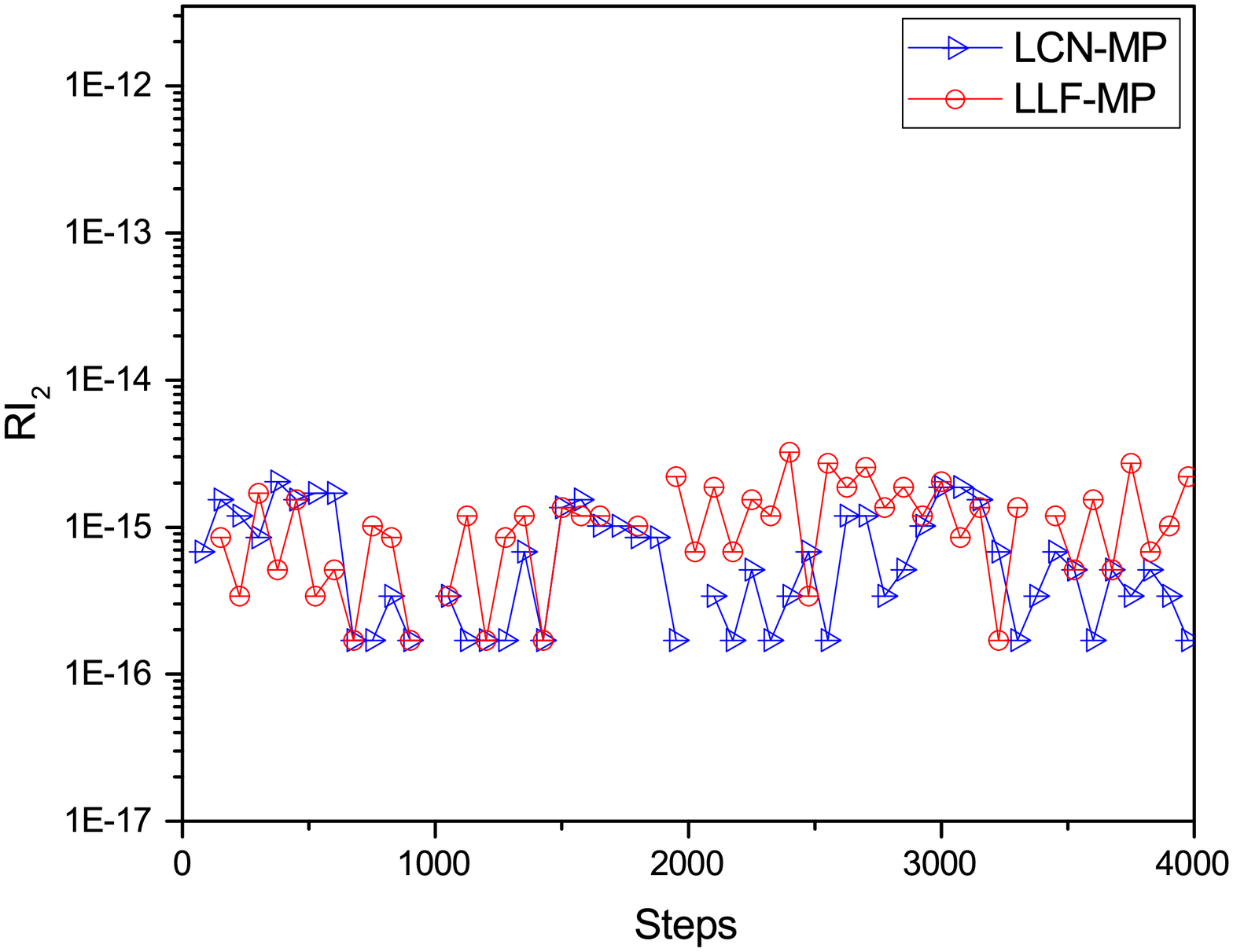}}
	\subfigure{
		\includegraphics[width=0.30\textwidth,height=0.30\textwidth]{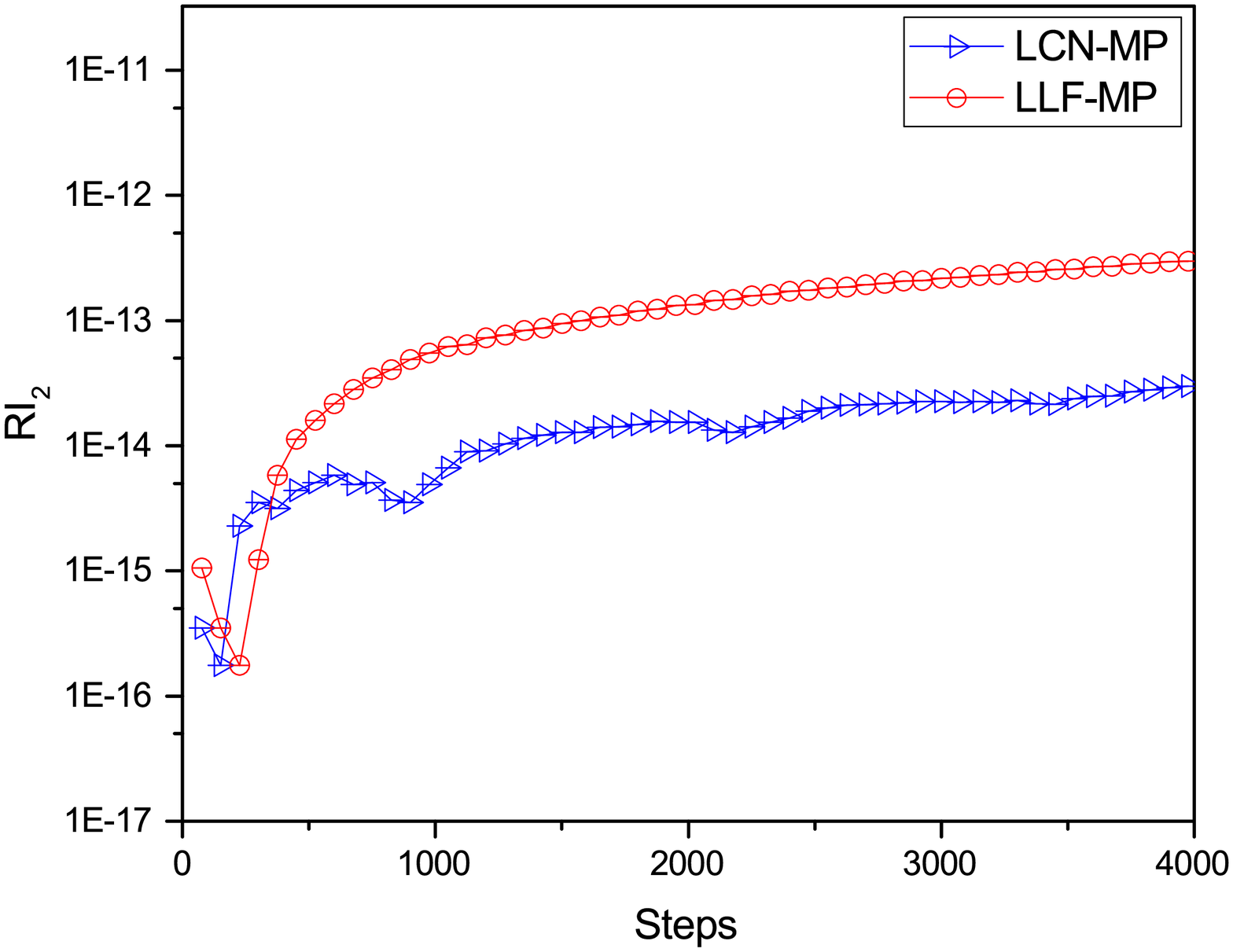}}
	\subfigure{
		\includegraphics[width=0.30\textwidth,height=0.30\textwidth]{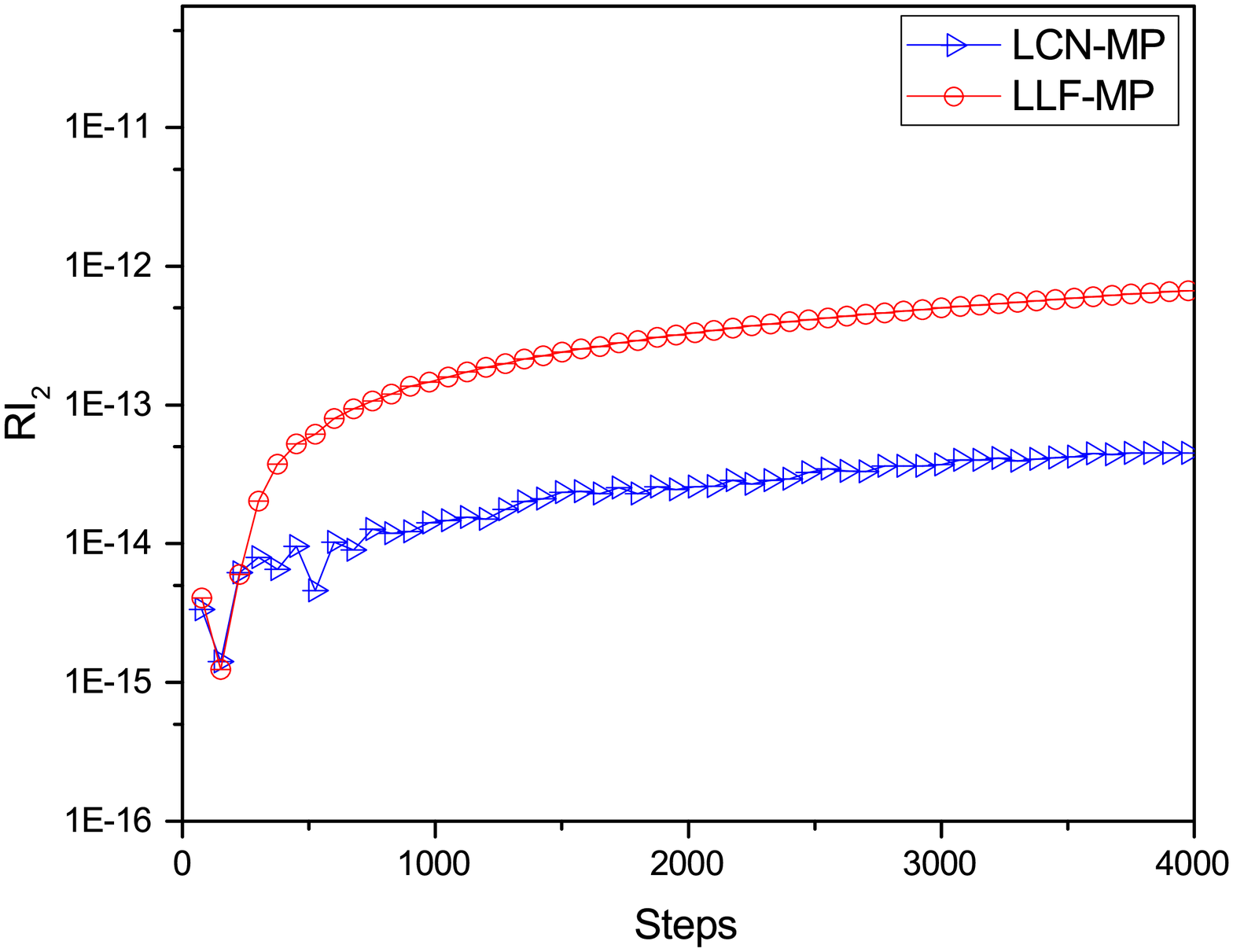}}
	\caption{\small The relative momentum error with  with $\sigma=0.04$ (left), $\sigma=0.01$ (middle) and $\sigma=0.001$ (right)  until $T=40$.\label{fig:maxwellian-momentum-0001}}
\end{figure}
In a word, these numerical results  confirm that the
convergence property as well as the efficiency and accuracy of the two new schemes. Moreover, they preserve the momentum very well.

\section{ Concluding remarks}
In this paper, we have developed two fully discrete linear-implicit conservative Fourier pseudo-spectral schemes for the RLW equation, including a linear-implicit Crank-Nicolson Fourier pseudo-spectral scheme (LCN-MP) and a linear-implicit leap-frog method (LLF-MP). The proposed schemes are proved to conserve the discrete momentum conservation law and be uniquely solvable. In addition, they are linear, i.e., only a linear equation system needs to be solved at each time step. The FFT algorithm is also used to speed up the computation in the numerical implementation. We utilize the standard energy method to prove in detail that LCN-MP is convergent in the order of $\mathcal{O}(\tau^2+N^{-r})$ in the discrete $L^{\infty}$ norm. The analysis technique can also be used easily to prove the convergence of LLF-MP. Numerical experiments are presented to illustrate the excellent performance of the proposed schemes in the end.

Overall, these two linear conservative schemes are accurate and efficient, and the idea presented in this paper can be readily extended to study a broader class of Hamiltonian PDEs for developing momentum-preserving algorithms. Note that the linear conservative schemes can be generalized naturally into the multi-dimensional case. But the current analytical technique is no longer valid in high dimensions, which will be considered in the future work.

\section*{References}

\end{document}